\title{Symmetric Operations on Domains of Size at Most 4}
\author{Zarathustra Brady and Holden Mui}
\date{\today}
\begin{document}

\newtheorem{thm}{Theorem}
\newtheorem{theorem}[thm]{Theorem}

\newtheorem{conjecture}[thm]{Conjecture}
\newtheorem{corollary}[thm]{Corollary}
\newtheorem{lemma}[thm]{Lemma}
\newtheorem{proposition}[thm]{Proposition}
\theoremstyle{definition}
\newtheorem{remark}[thm]{Remark}
\newtheorem{definition}[thm]{Definition}
\newtheorem{example}[thm]{Example}

\def\changemargin#1#2{\list{}{\rightmargin#2\leftmargin#1}\item[]}
\let\endchangemargin=\endlist 

\newenvironment{claim}[1]{\begin{changemargin}{0.5cm}{0.5cm} 
\par\noindent\textbf{Claim:}\space#1}{\end{changemargin}}
\newenvironment{claimproof}[1]{\begin{changemargin}{0.5cm}{0.5cm} \par\noindent\textbf{Proof:}\space#1}{\hfill $\blacksquare$\end{changemargin}}

\newcommand{\size}[1]{\vert #1 \vert}
\renewcommand{\th}{\text{th}}
\newcommand{\Sg}[2]{\textnormal{Sg}_#1\hspace{-2pt}\left(#2\right)}
\newcommand{\suchthat}{\mid}
\newcommand{\ar}{\textnormal{ar}}
\newcommand{\satisfies}{\models}
\newcommand{\ZZ}{\mathbb Z}
\newcommand{\RR}{\mathbb R}
\renewcommand{\aa}{\bm{a}}
\newcommand{\rr}{\bm{r}}
\newcommand{\xx}{\bm{x}}
\newcommand{\yy}{\bm{y}}
\newcommand{\lpr}{\triangle}
\newcommand{\fO}{\mathcal O}
\newcommand{\preserves}{\triangleright}
\newcommand{\gen}[1]{\left \langle #1 \right \rangle}
\newcommand{\sgn}{\operatorname{sgn}}
\newcommand{\conv}{\operatorname{conv}}
\newcommand{\Sym}{\operatorname{Sym}}
\newcommand{\thereexists}{\exists\:}
\newcommand{\Clo}{\operatorname{Clo}}

\renewcommand{\AA}{\mathbb{A}}
\newcommand{\BB}{\mathbb{B}}
\newcommand{\PP}{\mathbb{P}}
\renewcommand{\SS}{\mathbb{S}}

\newcommand{\binarythree}[3]{
\begin{tabular}{c|ccc}
$f_{#1#2#3}$ & $-$  & $0$  & $+$ \\ \hline
$-$          & $-$  & $#3$ & $#2$ \\
$0$          & $#3$ & $0$  & $#1$ \\
$+$          & $#2$ & $#1$ & $+$
\end{tabular}
}

\newcommand{\binaryfour}[6]{
\begin{tabular}{c|cccc}
$f_{#1#2#3#4#5#6}$ & $0$  & $1$  & $2$  & $3$ \\ \hline
$0$                & $0$  & $#1$ & $#2$ & $#3$ \\
$1$                & $#1$ & $1$  & $#4$ & $#5$ \\
$2$                & $#2$ & $#4$ & $2$  & $#6$ \\
$3$                & $#3$ & $#5$ & $#6$ & $3$
\end{tabular}
}

\maketitle


\begin{abstract}
To convert a fractional solution to an instance of a constraint satisfaction problem into a solution, a rounding scheme is needed, which can be described by a collection of symmetric operations with one of each arity.
An intriguing possibility, raised in a recent paper by Carvalho and Krokhin, would imply that any clone of operations on a set $D$ which contains symmetric operations of arities $1, 2, \ldots, \size{D}$ contains symmetric operations of all arities in the clone. If true, then it is possible to check whether any given family of constraint satisfaction problems is solved by its linear programming relaxation.

We characterize all idempotent clones containing symmetric operations of arities $1, 2, \ldots, \size{D}$ for all sets $D$ with size at most four and prove that each one contains symmetric operations of every arity, proving the conjecture above for $\size{D} \leq 4$.
\end{abstract}


\section{Introduction}
The Constraint Satisfaction Problem, commonly abbreviated as the CSP, is the decision problem where we are given a list of variables and a list of constraints on the variables, and we must determine whether or not there exists an assignment of the variables which satisfies every constraint. While this decision problem is NP-hard in general, certain classes of CSPs can be solved in polynomial time. Among those are the CSPs that are solved by their linear programming relaxation; that is, a ``fractional solution'' to an instance of such a CSP can be rounded to a solution. Such CSPs have been characterized as the CSPs for which the clone of operations preserving its relations contains symmetric operations of every arity. In this paper, we investigate a conjecture that, if true, gives a sufficient condition for a clone to contain symmetric operations of every arity.
\begin{conjecture}\label{conj}
Suppose a clone over a domain $D$ contains symmetric operations of arities $1, 2, \ldots, \size{D}$. Then it contains symmetric operations of every arity.
\end{conjecture}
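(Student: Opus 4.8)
The plan is to prove the conjecture by induction on $\size D$, reducing first to idempotent clones and then splitting on whether the associated algebra $\mathbf{D}=(D;\mathcal{C})$ is simple. It suffices to treat \emph{idempotent} clones: passing to the core of the relational structure invariant under $\mathcal{C}$ and replacing each symmetric operation $s_i$ by $\delta_i^{-1}\circ s_i$, where $\delta_i$ is the automorphism $x\mapsto s_i(x,\dots,x)$, produces idempotent symmetric operations on a (possibly smaller) domain, and symmetric operations of all arities there lift back to $\mathcal{C}$ by composing with the core retraction. So assume $\mathcal{C}$ is idempotent. The cases $\size D\le 1$ are trivial, and when $\size D=2$ the hypothesis supplies a commutative idempotent binary term on a two-element set; such a term is $\min$ or $\max$ for some linear order, i.e.\ a semilattice operation, and iterating it gives (totally) symmetric operations of every arity. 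So suppose $\size D\ge 3$ and assume the statement for all smaller domains.

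\textbf{Non-simple case.} Suppose $\mathbf{D}$ has a congruence $\theta$ with $0_D<\theta<1_D$. Each $\theta$-block $B$ is a subuniverse (by idempotence) with $\size B<\size D$, and $\size{D/\theta}<\size D$; both $\mathbf{D}/\theta$ and the block algebras inherit symmetric terms of arities $1,\dots,\size D$ (projecting, resp.\ restricting, the $s_i$), in particular of every arity up to their own size, so by the inductive hypothesis they have symmetric terms of \emph{every} arity. It remains to lift: from symmetric $n$-ary terms on $\mathbf{D}/\theta$ and on each block, produce a symmetric $n$-ary term on $\mathbf{D}$. Working inside the free algebra $\mathbf{F}=\mathbf{F}_{\mathcal{C}}(x_1,\dots,x_n)$, on which $S_n$ acts by permuting the generators, we have an $S_n$-fixed element modulo the congruence generated by $\theta$ and inside each relevant block, and we want one in $\mathbf{F}$ itself. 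This is exactly the problem of pulling a symmetric (i.e.\ $S_n$-fixed) element back along a congruence, which absorption theory is built to handle: the binary symmetric term already forces absorbing subuniverses, and one runs a connectivity/fixed-point argument in the style of Barto and Kozik. The same applies if $\mathbf{D}$ is (isomorphic to) a subdirect product of two strictly smaller algebras.

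\textbf{Simple case.} Now suppose $\mathbf{D}$ is simple. A symmetric term of arity $\ge 2$ is a Taylor term, so $\mathbf{D}$ is Taylor and omits tame-congruence type $\mathbf{1}$; being simple, it has a single type among $\mathbf{2},\mathbf{3},\mathbf{4},\mathbf{5}$. Type $\mathbf{2}$ is impossible under the hypothesis: a finite simple idempotent algebra of type $\mathbf{2}$ is affine, hence polynomially equivalent to a module over a finite ring, whose idempotent symmetric $n$-ary terms are exactly $x\mapsto\tfrac1n(x_1+\cdots+x_n)$ with $n$ invertible on the module; but if a prime $p$ divides $\size D$ then the module has an element of order $p$, so $p$ is not invertible and there is no symmetric term of arity $p\le\size D$. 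This leaves types $\mathbf{3}$, $\mathbf{4}$, $\mathbf{5}$, where $\mathbf{D}$ has a two-element trace whose induced algebra is, respectively, the $2$-element Boolean algebra, the $2$-element lattice, or the $2$-element semilattice — each with symmetric terms of all arities — and one must spread this structure out to all of $\mathbf{D}$. I expect the hypothesis to force $\mathbf{D}$ to be two-element in types $\mathbf{3}$ and $\mathbf{4}$ and to carry a genuine semilattice term in type $\mathbf{5}$ (finishing again by iteration), both through absorption arguments; this is the step for which the paper substitutes an explicit classification when $\size D\le 4$.

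\textbf{Main obstacle.} The difficulty is concentrated in the two absorption-flavored steps above — lifting symmetric terms along a congruence, and ruling out large simple algebras of Boolean or lattice type carrying symmetric terms of arities $1,\dots,\size D$ but not of all arities. The root cause is that, unlike \emph{totally} symmetric operations, where a single operation of arity $\size D$ collapses up and down to all arities by padding, a symmetric operation remembers multiplicities and cannot be padded, so the induction must genuinely splice together symmetric operations living on different sections of $\mathbf{D}$, uniformly in the arity $n$.
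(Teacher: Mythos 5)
This statement is Conjecture \ref{conj}, which the paper does not prove and explicitly leaves open; the paper only establishes the special case $\size{D}\le 4$ (Theorem \ref{result}), by reducing to idempotent minimally semi-round clones and then exhaustively classifying them. Your text is a proof \emph{program}, not a proof: the two places you yourself flag --- lifting symmetric terms along a congruence in the non-simple case, and the analysis of simple algebras of types $\mathbf{3},\mathbf{4},\mathbf{5}$ --- are precisely where all the content of the conjecture lives, and you do not supply arguments there. Saying that ``absorption theory is built to handle'' the pullback of an $S_n$-fixed element of $\mathbf{F}/\theta$ to an $S_n$-fixed element of $\mathbf{F}$ is not a proof step: the obstruction to such lifting is exactly the phenomenon the conjecture is about (compare the Carvalho--Krokhin condition on subquotient automorphisms quoted in the introduction, and Conjecture \ref{reversible-conj}, the authors' proposed mechanism for this gluing, which is also open). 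No known absorption or Barto--Kozik connectivity result produces symmetric (as opposed to cyclic or totally symmetric) terms of a prescribed arity from symmetric terms on a quotient and on its blocks.

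The simple case is worse than unfinished: your expected trichotomy is contradicted by an example inside this very paper. The algebra $(\{-,0,+\},\sgn(x+y))$ is simple, idempotent, satisfies the hypothesis, and its clone $\gen{f_{+0-}}$ is minimally round; since $\sgn(x+y)$ is not associative and a semilattice operation would generate a proper round subclone by Proposition \ref{semilatticeoperation}, this clone contains \emph{no} semilattice operation, and the algebra is not two-element. So ``two-element in types $\mathbf{3},\mathbf{4}$, semilattice term in type $\mathbf{5}$'' cannot be the right conclusion of the simple case, and the four-element classification (e.g.\ the minimal round clones generated by $f_{003012}$ together with a ternary symmetric $g_{c,d}$) shows the simple picture is richer still. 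The type-$\mathbf{2}$ exclusion via affine algebras and the reduction to idempotent clones are fine and consistent with the paper's Theorem \ref{idempotent}, but as it stands the proposal proves nothing beyond what was already known, and for $\size{D}\le 4$ you would still need the paper's case analysis (or an argument replacing it) to close the gaps you identify.
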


This conjecture is a weak form of an open problem mentioned in section 6 of \cite{carvalho}: its authors speculate that a finite algebraic structure $\AA$ has symmetric terms of every arity if and only if $\AA$ has no subquotient $\BB$ such that the automorphism group of $\BB$ contains a pair of automorphisms with no common fixed point.

In this paper, we prove Conjecture \ref{conj} for $\size{D} \leq 4$.
\begin{theorem}\label{result}
Suppose a clone over a domain $D$ with $\size{D}\leq4$ contains symmetric operations of arities $1, 2, \ldots, \size{D}$. Then it contains symmetric operations of every arity.
\end{theorem}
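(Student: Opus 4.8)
The plan is to reduce to the idempotent case and then to classify, up to relabelling of $D$, all idempotent clones on a set of size at most four that contain symmetric operations of arities $1,\dots,\size D$, exhibiting in each one a family of symmetric operations of every arity. The reduction to idempotent clones is standard: a symmetric unary operation is merely a unary operation, iterating one produces an idempotent retraction, and passing to a retract of minimal size (and adjoining the constants) reduces the question --- which is the question of solvability by the basic linear programming relaxation --- to idempotent clones, while $\size D\le4$ is preserved and the operations $s_2,\dots$ survive. So from now on $\fO$ is idempotent, as in the classification announced in the abstract. We also record for later use that ``$\fO$ contains a symmetric operation of arity $n$'' is a height-one (minor) condition, hence is inherited by quotients and subalgebras of $(D,\fO)$ and by (relational) products, which will let us dispose of several cases by descent to smaller domains.

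For $\size D\le 2$ there is essentially nothing to prove: the only commutative idempotent binary operations on a two-element set are $\wedge$ and $\vee$, i.e.\ a semilattice operation $\cdot$, and then $s_n(x_1,\dots,x_n)=x_1\cdot x_2\cdots x_n$ is symmetric of every arity. In particular any clone satisfying the hypothesis contains a semilattice operation, which is the source of symmetric operations in all the easy cases below as well.

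For $\size D=3$ and $\size D=4$ I would enumerate, up to the symmetries of $D$, the commutative idempotent binary operations $f$ that $\fO$ can contain --- exactly the operations $f_{ijk}$, resp.\ $f_{ijklmn}$, of the tables above --- and for each admissible $f$ analyse the clone generated by $f$ together with a symmetric ternary operation $s_3$ and, when $\size D=4$, also a symmetric quaternary operation $s_4$. For each surviving $f$ one of two things should happen. Either the clone is forced to contain a semilattice operation, or an operation witnessing a decomposition of $(D,\fO)$ into pieces on smaller domains that still meet the hypothesis, or more generally a term structure from which symmetric operations $s_n$ of all arities can be written down explicitly --- for instance when $f$ is a $2$-semilattice refining a semilattice congruence, or when the blocks of such a congruence carry a semilattice together with a compatible affine-type operation so that $s_n$ can be assembled block by block; in each such case we construct the desired family. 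Or else no symmetric $s_3$ (or $s_4$) can coexist with that $f$ at all, the recurring obstruction being a rock--paper--scissors pattern or a hidden affine factor over $\ZZ_3$, neither of which admits any symmetric ternary operation, and that branch is vacuous. A clone containing two distinct commutative idempotent binary operations is treated under both at once; the resulting bookkeeping is finite and can be organised by a search over binary operations on a four-element set, with the surviving clones then checked by hand.

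The main obstacle is the four-element analysis. Descent via the height-one closure handles most cases, but the genuinely four-element clones that remain --- those admitting no such decomposition and no semilattice term, e.g.\ ones built from $\ZZ_4$, from $\ZZ_2^2$, or from the non-chain four-element semilattices --- need ad hoc constructions of the symmetric operations $s_n$ rather than a reduction to smaller domains, and one must make sure the list of cases is genuinely exhaustive so that no clone satisfying the hypothesis is overlooked. Producing those constructions and certifying exhaustiveness is where the real work of the proof lies.
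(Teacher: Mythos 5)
Your overall strategy matches the paper's: reduce to idempotent clones, enumerate the symmetric idempotent binary operations up to renaming of $D$, and for each one either construct symmetric operations of every arity or rule the case out. But two gaps separate this plan from a proof, one you acknowledge and one you do not.

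The acknowledged gap is that the case analysis is not carried out, and that is where essentially all the content of the theorem lies. The paper's size-4 classification starts from 192 isomorphism classes of commutative idempotent binary operations, cuts them to 23 via iterated substitution maps such as $f(a,b)\mapsto f(f(a,f(a,b)),f(b,f(a,b)))$ together with a lemma eliminating every operation that acts linearly on a three-element subset, and then gives an explicit inductive construction of a symmetric $(k{+}1)$-ary term from a symmetric $k$-ary term in each surviving case. Until those constructions are produced, nothing is established. The paper also organizes the whole enterprise around a Zorn-type reduction to \emph{minimal} semi-round clones, which is what makes the enumeration finite; your proposal omits this step.

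The unacknowledged gap is a substantive error that would make the classification wrong even if you filled in the casework as you describe. You dismiss the rock--paper--scissors pattern as a vacuous branch on the grounds that ``no symmetric $s_3$ can coexist with that $f$.'' That is false. The cyclic automorphism of $\gen{f_{+-0}}$ forbids a symmetric ternary term only \emph{inside that one clone}; the larger clone $\gen{f_{+-0},g}$, with $g$ a symmetric idempotent ternary operation, has no cyclic automorphism, is perfectly consistent, and is exactly the hypothesis you are supposed to be analyzing. The paper shows it is a genuine minimal round clone, and its treatment --- along with seven analogous binary operations on the four-element domain --- is the hardest part of the argument, passing through stabilized tuples $c_{f_k}^N(\xx)$ before applying the symmetric ternary operation. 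The affine case over $\ZZ/3\ZZ$ really is excluded from the classification, but not because no $s_3$ coexists with it (plenty of semi-round clones contain $f_{-0+}$); rather, the paper proves any such clone always has a proper semi-round subclone, so it is never \emph{minimal}. Conflating ``not minimally semi-round'' with ``not semi-round at all'' is the conceptual confusion behind both misreadings. A smaller point: adjoining a separate generator $s_4$ when $\size D = 4$ is superfluous, since a symmetric $4$-ary term can be built from $s_2$ and $s_3$ by a known identity, which the paper uses to keep the bookkeeping manageable.
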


In the appendix, we sketch a more ambitious conjecture about the solvability of certain types of weakly consistent constraint satisfaction problems attached to an algebraic structure satisfying the assumption of Conjecture \ref{conj}.

\subsection{Motivation}

If Conjecture \ref{conj} is true, then it gives an efficient way of determining whether or not combinatorial puzzles can be solved using systems of linear inequalities.

To understand Conjecture \ref{conj}'s implications, it is helpful to characterize the types of combinatorial puzzles we want to look at; a good example of such a puzzle is Sudoku. Sudoku is a constraint satisfaction problem over the domain $D = \{1, 2, 3, 4, 5, 6, 7, 8, 9\}$ with ten types of constraints. There is the 9-ary constraint which asserts that its inputs are pairwise distinct, and there are nine unary constraints, each of the form ``this variable is equal to $d$'' for some $d \in D$. Instances of this constraint satisfaction problem have 81 variables, and the variables that the constraints apply to depend on the specific instance. Other examples of CSPs include HornSAT and 3Coloring: HornSAT is the constraint satisfaction problem where the task is to determine whether or not a set of Horn clauses (implications) admits an assignment of the variables satisfying each Horn clause, and 3Coloring is the constraint satisfaction problem whose task is to determine whether or not a given graph admits a 3-coloring.

The linear programming relaxation of an instance of a CSP is, informally, the set of all locally consistent probability distributions over its variables and constraint relations, and is defined by a collection of linear inequalities (a rigorous but terse definition is given in section 2). A CSP is solved by its linear programming relaxation iff there is a way to turn points in the linear programming relaxation of every instance of the CSP to a solution of that CSP, known as a rounding scheme.

While finding solutions to instances of a general CSP is NP-hard, solving the linear programming relaxation of any CSP only takes polynomial time, so there is an efficient way to find a solution to an instance of a CSP if it is solved by its linear programming relaxation. Usually this is not the case, but for these special CSPs, local probability distributions of solutions, called fractional solutions, can be converted to true solutions using the rounding scheme.

The CSPs solved by their linear programming relaxation have been characterized in \cite{kun}. They are precisely the ones for which the clone of operations preserving each relation defining the CSP contains symmetric operations of every arity.

\begin{theorem}[Theorem 2 of \cite{kun}]
The CSP defined by a collection of relations $\Gamma$ is solved by the linear programming relaxation if and only if the clone of operations that preserves each relation in $\Gamma$ contains symmetric operations of every arity.
\end{theorem}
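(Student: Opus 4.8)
The plan is to prove the two directions separately: the ``if'' direction by a direct rounding argument, and the ``only if'' direction by constructing, for each arity $n$, a finite ``indicator'' instance of CSP($\Gamma$) whose solutions are exactly the symmetric $n$-ary polymorphisms of $\Gamma$. Throughout I take ``solved by the linear programming relaxation'' to mean that every instance whose LP relaxation is feasible is satisfiable.

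For the ``if'' direction, suppose $\operatorname{Pol}(\Gamma)$ contains a symmetric operation $s_n$ for every arity $n$, and let $\mathcal I$ be an instance of CSP($\Gamma$) with a feasible LP point. The LP relaxation is a polyhedron cut out by linear (in)equalities with integer coefficients, so if nonempty it contains a rational point; clearing denominators, fix $N$ so that every variable $v$ carries a distribution $p_v$ with $Np_v(d)\in\ZZ_{\ge0}$ and every constraint $R(v_1,\dots,v_k)$ carries a distribution $p_C$ on $R$ with $Np_C(\rr)\in\ZZ_{\ge0}$ whose $i$-th marginal is $p_{v_i}$. Read $Np_v$ as a size-$N$ multiset $M_v$ over $D$, and $Np_C$ as a $k\times N$ matrix $A_C$ whose columns lie in $R$ and whose $i$-th row is, as a multiset, $M_{v_i}$. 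Set $h(v)=s_N(M_v)$, well-defined since $s_N$ is symmetric. Applying $s_N$ to the columns of $A_C$ produces a tuple of $R$ (as $s_N$ preserves $R$) whose $i$-th entry is $s_N$ of the $i$-th row of $A_C$, i.e.\ $s_N(M_{v_i})=h(v_i)$; hence $h$ satisfies every constraint. This is exactly the step that forces symmetric operations of unbounded arity, since $N$ is not bounded over all instances.

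For the ``only if'' direction, fix $n$ and build the instance $\mathcal I_{\Gamma,n}$ whose variables are the size-$n$ multisets over $D$. For each $R\in\Gamma$, of arity $k$, and each $k\times n$ matrix $A$ over $D$ all of whose columns lie in $R$, impose a constraint $R$ on the $k$-tuple of variables obtained by reading the rows of $A$ as multisets; this is a finite instance since $\Gamma$ is finite. A solution $g$ of $\mathcal I_{\Gamma,n}$ yields $f(a_1,\dots,a_n):=g(\{a_1,\dots,a_n\})$, which is symmetric by construction and is a polymorphism of $\Gamma$: given columns $\mathbf c_1,\dots,\mathbf c_n\in R$, assemble them into a matrix $A$, and the corresponding constraint forces the coordinatewise application of $f$ to the $\mathbf c_j$ into $R$. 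Finally $\mathcal I_{\Gamma,n}$ has a feasible LP point: assign to the variable $M$ the distribution ``pick a uniformly random element of $M$'', and to the constraint coming from $A$ the distribution ``pick a uniformly random column of $A$''; the $i$-th marginal of the latter is ``uniform entry of row $i$ of $A$'', which matches the distribution on the $i$-th variable, so the point is feasible. By hypothesis $\mathcal I_{\Gamma,n}$ is satisfiable, so $\Gamma$ has a symmetric $n$-ary polymorphism; as $n$ was arbitrary, this completes the proof.

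Both directions become routine bookkeeping once the right objects are in place; the one genuinely non-obvious choice is to let the variable set of $\mathcal I_{\Gamma,n}$ be the $S_n$-quotient of $D^n$ — the size-$n$ multisets — rather than $D^n$ itself. This is simultaneously what makes the recovered polymorphism symmetric for free and what makes the ``uniform'' LP point well-defined. A minor point to handle with care is that two matrices with the same multiset of entries in each row give two distinct constraints on the same scope of $\mathcal I_{\Gamma,n}$; this causes no trouble, since the proposed LP point assigns each such constraint the (marginal-consistent) column distribution of its own matrix.
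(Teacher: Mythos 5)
Your proposal is correct, but note that the paper itself contains no proof of this statement: it is imported verbatim as Theorem 2 of \cite{kun} and used as a black box, so there is no internal argument to compare against. Your two-directional argument is essentially the standard proof from the literature: for the rounding direction, pass to a rational point of the (rational) LP polyhedron, clear denominators to view each variable's vector as a size-$N$ multiset and each constraint's distribution as $N$ tuples of $R$, and round with the symmetric operation $s_N$, symmetry making the value depend only on the multiset and compatibility with $R$ making the rounded assignment a solution; for the converse, the indicator instance whose variables are the size-$n$ multisets over $D$ and whose constraints come from $k\times n$ matrices with columns in $R$ admits the canonical uniform fractional solution, so solvability by the LP relaxation yields an assignment that is exactly a symmetric $n$-ary polymorphism. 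Two minor points are worth making explicit: the construction of $\mathcal I_{\Gamma,n}$ (and the choice of a single common denominator $N$) uses that $D$ and $\Gamma$ are finite, which is the intended setting; and in the rounding direction a variable occurring in no constraint has an unconstrained LP vector, so it should simply be assigned an arbitrary value rather than via $s_N(M_v)$.
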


An equivalent characterization of clones with symmetric operations of every arity appears in a recent article by Butti and Dalmau about solving CSPs with \emph{distributed} algorithms \cite{dalmau-distributed}. In their setup, each agent has access to a single variable or to a single constraint, agents can communicate only when one owns a constraint involving the variable owned by the other, and the agents are \emph{anonymous}, so there is no obvious way to elect a leader.

\begin{theorem}[Theorem 6 of \cite{dalmau-distributed}] The CSP defined by a collection of relations $\Gamma$ can be solved in the distributed setting described above if and only if the clone of operations that preserves each relation in $\Gamma$ contains symmetric operations of every arity. If it can be solved in the distributed setting at all, then it can be solved in this setting in polynomial time.
\end{theorem}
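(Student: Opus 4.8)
The plan is to route the statement through the linear programming characterization quoted just above. By Theorem~2 of \cite{kun}, the clone $\operatorname{Pol}(\Gamma)$ of operations preserving every relation of $\Gamma$ contains symmetric operations of every arity if and only if $\mathrm{CSP}(\Gamma)$ is solved by its basic linear programming relaxation (BLP). Hence it suffices to prove the equivalence ``$\mathrm{CSP}(\Gamma)$ is solvable in the distributed setting of \cite{dalmau-distributed} if and only if it is solved by the BLP relaxation'', together with a polynomial bound on the number of communication rounds in the forward implication. The principle underlying both directions is that, on the bipartite factor graph of a CSP instance, the data available to an anonymous synchronous algorithm at a node after $t$ rounds is precisely the depth-$t$ truncation of that node's universal cover, and for CSP instances this ``covering type'' is exactly the information recorded by the BLP: the BLP is the constraint-satisfaction incarnation of the one-dimensional (fractional) relaxation, and it is invariant under the covering/fractional-isomorphism equivalence that an anonymous algorithm cannot see through.

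For the direction ``BLP solves $\mathrm{CSP}(\Gamma)$ $\Longrightarrow$ distributed-solvable in polynomial time'' I would give an explicit message-passing algorithm on the factor graph. Each variable node $x$ maintains a convex polytope $Q_x \subseteq \Delta(D)$ of admissible marginals and each constraint node $C$ a polytope $P_C$ of admissible probability distributions on the tuples of its relation, both initialized to the full simplices. In each round a constraint sends each incident variable the projection of $P_C$ onto that coordinate; a variable replaces $Q_x$ by the intersection of the incoming messages; a constraint then re-solves a local feasibility program, discarding from $P_C$ every distribution one of whose marginals has just left the corresponding $Q_x$. These polytopes are nested and only shrink, every update is invariant under relabelling nodes and values, and the common fixed point is by construction the projection of the BLP polytope onto the individual nodes; thus the terminal state is nonempty exactly when the BLP is feasible, i.e.\ (by hypothesis) exactly when the instance is satisfiable, and every node outputs the resulting bit. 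Anonymity and the absence of a leader are not obstacles, since no step refers to node identities. The two points needing genuine work are (i) that this local fixed point is the exact BLP projection and not some strictly weaker object --- the BLP is in general a strictly more powerful relaxation than arc consistency, so the algorithm cannot get away with propagating only reduced domains, and the messages must carry the full convex-geometric data --- and (ii) a polynomial round bound, which I would obtain by controlling the finitely many supports and vertex descriptions involved and arguing they can change only polynomially often in $\size{V}+\size{C}+\size{D}$.

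For the converse, ``distributed-solvable $\Longrightarrow$ BLP solves $\mathrm{CSP}(\Gamma)$'', I would argue by contradiction. If the BLP does not solve $\mathrm{CSP}(\Gamma)$, fix an unsatisfiable instance $I$ whose BLP relaxation is feasible, and clear denominators in a feasible solution to obtain an integer $N$, a multiset of $N$ labelled ``tokens'' for each variable (with $N\mu_x(a)$ of them labelled $a$), and for each constraint a transportation plan between the token multisets of its variables. Splitting each constraint into $N$ new constraints according to this plan and wiring each to one token of each variable produces an instance $I^{*}$ that is genuinely satisfiable --- every token retains its label --- while the marginal conditions of the BLP guarantee that around each token $I^{*}$ has exactly the same finite-depth views as $I$ has around the corresponding variable (equivalently, $I$ and $I^{*}$ have isomorphic universal covers and fractionally isomorphic factor graphs), and the splitting can be carried out compatibly with whatever local orientation or port structure the model provides. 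In the anonymous synchronous model an algorithm's state, halting, and output at a node depend only on that node's view; hence the purported distributed solver behaves identically on $I$ and $I^{*}$ and returns the same verdict on both, contradicting the fact that one is unsatisfiable and the other satisfiable. Combined with the previous paragraph this also yields the final sentence of the theorem: distributed solvability forces BLP-solvability, which in turn yields a polynomial-round algorithm.

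The step I expect to be the real obstacle is pinning down the ``view $=$ BLP'' correspondence tightly enough to serve both ends of the argument under the precise model of \cite{dalmau-distributed}: on the algorithmic side, proving that iterated local re-solving converges to the \emph{exact} BLP projection within polynomially many rounds rather than to a weaker consistency notion; and on the lower-bound side, carrying out the token blow-up so that $I^{*}$ is simultaneously satisfiable, model-compatible with $I$, and view-indistinguishable from it at every depth. Once that equivalence is in place, everything else reduces to bookkeeping together with an application of Theorem~2 of \cite{kun}.
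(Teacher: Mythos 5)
This statement is not proved in the paper at all: it is quoted verbatim as Theorem 6 of \cite{dalmau-distributed}, so there is no internal proof to compare against, and your proposal has to stand on its own as a reconstruction of Butti and Dalmau's argument. Your overall reduction (use Theorem 2 of \cite{kun} to replace ``symmetric operations of every arity'' by ``solved by the basic LP relaxation,'' then prove distributed-solvable $\iff$ BLP-solvable) is the right frame, and your lower-bound direction is essentially the standard and correct idea: clear denominators in a feasible BLP point, blow each variable up into $N$ labelled tokens and each constraint into $N$ wired copies, obtaining a satisfiable instance $I^*$ that covers $I$, so an anonymous algorithm cannot distinguish them. Two model-level details still need care there: the two instances you compare should have the same size (compare $I^*$ with $N$ disjoint copies of $I$, not with $I$ itself, in case agents may use the number of nodes), and the wiring must respect the coordinate/port structure of constraint scopes; both are fixable.

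The genuine gap is in the positive direction. Your message-passing algorithm propagates \emph{polytopes} $Q_x$, $P_C$ and asserts that the common fixed point is the projection of the BLP polytope and is reached in polynomially many rounds; neither claim is justified, and both are doubtful as stated. The fixed point of such local propagation is a pairwise-consistency condition on convex sets, which in general over-approximates the projections of the global feasible region, so nonemptiness of the limit does not certify BLP feasibility; and since the iterates are continuous convex bodies they may shrink strictly forever without stabilizing, so ``supports and vertex descriptions change only polynomially often'' is not an argument --- the object that stabilizes after polynomially many rounds is only the arc-consistency-type support information, which (as this very paper points out for $(\{-,0,+\},\sgn(x+y))$) is strictly weaker than the BLP. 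You also conflate deciding feasibility with the output the distributed model demands. The actual route is different: the agents distributively compute the iterated-degree (color-refinement) quotient of the factor graph, each agent then solves a canonical LP on this quotient \emph{locally} (BLP feasibility is invariant under fractional isomorphism, which is exactly the equivalence your lower bound exploits), and the symmetric operations of every arity are then used to \emph{round} the canonical fractional solution consistently at each agent --- so the symmetric polymorphisms enter the algorithm itself, not only through the citation of \cite{kun}. Without replacing your polytope-propagation step by something of this kind, the forward implication and the polynomial-time claim remain unproved.
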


For brevity, we call such clones \emph{round}, as they can be used to construct a rounding scheme that turns fractional solutions into solutions of instances of CSPs. However, determining whether or not a given clone is round is difficult (possibly undecidable); that is, unless Conjecture \ref{conj} is true.

Recall that Conjecture \ref{conj} states that the existence of symmetric operations of arities $1, 2, \ldots, \size{D}$ is a sufficient (and necessary, but this direction is obvious) condition for a clone to contain symmetric operations of every arity. Equivalently, Conjecture \ref{conj} asserts that given $\size{D}$ symmetric operations of arities $1, 2, \ldots, \size{D}$, one can create a symmetric operation of any desired arity by composing the $\size{D}$ ``base'' operations in some way. If Conjecture \ref{conj} is true, then determining whether or not a clone is round becomes a finite case check!

Although Conjecture \ref{conj} remains an open problem in full generality, we prove Conjecture \ref{conj} for all clones over a domain of size at most 4. We achieve this by classifying all minimal idempotent clones over a domain of size at most 4 satisfying Conjecture \ref{conj}'s hypothesis and proving that each one is round. It is our hope that this classification will help future researchers form stronger hypotheses, verify their truth for a large number of examples, and ultimately take steps closer towards a proof (or a disproof) of Conjecture \ref{conj}.

\subsection{Road Map}

The remainder of our paper is organized as follows. In section 2, we go over definitions. In section 3, we summarize related results. In section 4, we prove results that simplify the enumeration of all minimal idempotent round clones over a domain of size at most four. In section 5, we enumerate all such clones. In section 6, we sketch a plausible line of attack on the general case of Conjecture \ref{conj}. In the appendix, we describe connections between the linear programming relaxation of a CSP and certain weak consistency conditions, and we conjecture a precise connection between them.




\section{Definitions}
\subsection{Constraint Satisfaction Problems}
A \textit{domain}, denoted by the capital letter $D$, is a set of values a variable can be assigned to.
A $k$-\textit{ary} \textit{relation} $R$ over a domain $D$ is a subset of the $k$-fold Cartesian product $D^k \coloneqq D \times \ldots \times D$; $k$ is known as the constraint's \textit{arity}, denoted $\ar(R)$.
A tuple $(a_1, \ldots, a_k)$ \textit{satisfies} a relation $R$ if $(a_1, \ldots, a_k) \in R.$ A \emph{constraint} is a pair consisting of a relation $R$ and an $\ar(R)$-tuple of variables.
A \textit{constraint satisfaction problem} is a pair $P = (D, \Gamma)$ where $D$ is its domain and $\Gamma$ is a set of relations over $D$.
An \textit{instance} of a constraint satisfaction problem $(D, \Gamma)$ is a pair $I = (X, T)$ where 
\begin{itemize}
    \item $X = \left\{x_1, \ldots, x_{\size{X}}\right\}$ is a finite set of variables, and
    \item $T$ is a set of constraints involving the variables in $X$, such that each constraint relation is an element of $\Gamma$. Formally, $T$ is a set of pairs $(\xx, R)$, where $R \in \Gamma$ and $\xx \in X^{\ar(R)}$ is the tuple of variables the relation $R$ is applied to.
\end{itemize}
An assignment $x_1 = a_1$, $\ldots$, $x_n=a_n$ of the variables to elements of $D$ is a \textit{solution} to that instance if, for each pair $(\xx, R) \in T$, $R$ is satisfied by the tuple $\xx$ after replacing each $x_i$ in $\xx$ with $a_i$.
The \textit{linear programming relaxation} of a $k$-ary relation $R = \left\{\rr_1, \ldots, \rr_{\size{R}}\right\}$ over a finite domain $D = \left\{d_1, \ldots, d_{\size{D}}\right\}$ is the polyhedron in $\left(\RR^{\size{D}}\right)^k$ defined by the set of all points 
\begin{align*}
    \bigl(&\left((v_1)_{d_1}, (v_1)_{d_2}, \ldots, (v_1)_{d_{\size{D}}}\right), \\
    &\left((v_2)_{d_1}, (v_2)_{d_2}, \ldots, (v_2)_{d_{\size{D}}}\right), \\
    &\hspace{70pt}\vdots \\
    &\left((v_k)_{d_1}, (v_k)_{d_2}, \ldots, (v_k)_{d_{\size{D}}}\right)\:\bigr)
\end{align*}
for which there exist reals $p_{\rr_1}, \ldots, p_{\rr_{\size{R}}}$ such that
\begin{gather*}
    0 \leq p_{\rr_1}, \ldots, p_{\rr_{\size{R}}} \leq 1, \\
    \sum_{\rr \in R} p_{\rr} = 1,
\end{gather*}
and
\[(v_i)_{d_j} = \sum_{\rr \in R \suchthat \rr_i = d_j} p_{\rr}\] for all $1 \leq i \leq k$ and $1 \leq d \leq \size{D}$.
The \textit{linear programming relaxation} of an instance $I=(X, T)$ of a CSP over a domain $D$ is the polyhedron in $\left(\RR^{\size{D}}\right)^{|X|}$ defined by the set of all points
\begin{align*}
    \biggl(&\left((x_1)_{d_1}, \hspace{12pt} (x_1)_{d_2}, \hspace{12pt} \ldots, (x_1)_{d_{\size{D}}}\right), \\
    &\left((x_2)_{d_1}, \hspace{12pt} (x_2)_{d_2}, \hspace{12pt} \ldots, (x_2)_{d_{\size{D}}}\right), \\
    &\hspace{95pt}\vdots \\
    &\left(\left(x_{\size{X}}\right)_{d_1}, \left(x_{\size{X}}\right)_{d_2}, \ldots, \left(x_{\size{X}}\right)_{d_{\size{D}}}\right)\biggr)
\end{align*}
such that for each pair $(\xx, R) \in T$, $\xx$ lies in $R$'s linear programming relaxation when each variable in $\xx$ is replaced by its corresponding $\size{D}$-tuple.
A \textit{fractional solution} of an instance of a CSP is a point inside its linear programming relaxation.
We say that a CSP is \textit{solved} by its linear programming relaxation if, for every instance $I$ of the CSP, the existence of a fractional solution implies the existence of a solution.

Let $P = (D, \Gamma)$ be a constraint satisfaction problem, let $X=\left\{x_1, \ldots, x_{\size{X}}\right\}$ be a set of variables, and let \[I=\left(X, \{(\xx_1, R_1), \ldots, (\xx_{\size{R}}, R_{\size{R}})\}\right)\] be an instance of $P$. A \textit{step} $s=(k, (i, j))$ in $I$ is defined to be a constraint relation $(\xx_k, R_k)$ and a pair of integers $1 \leq i, j \leq \ar(R_k)$; we think of the step $s$ as connecting the variable $(\xx_k)_i$ to the variable $(\xx_k)_j$. A \textit{cycle} $p$ in $I$ is a finite sequence of steps
\[s_1, \ldots, s_{\size{p}} = \left(k_1, (i_1, j_1)\right), \ldots, \left(k_{\size{p}}, (i_{\size{p}}, j_{\size{p}})\right)\]
for which $(\xx_{k_1})_{j_1}=(\xx_{k_2})_{i_2}$, $(\xx_{k_2})_{j_2}=(\xx_{k_3})_{i_3}$, $(\xx_{k_{\size{p}-1}})_{j_{\size{p}-1}}=(\xx_{k_{\size{p}}})_{i_{\size{p}}}$, and $(\xx_{k_{\size{p}}})_{j_{\size{p}}}=(\xx_{k_1})_{i_1}$, where the subscript $n$ on each $(\xx_i)_n$ represents the variable in $X$ corresponding to the $n^\th$ coordinate of $\xx_i$. If $B$ is a subset of $D$ and $s=(k, (i, j))$ is a step in $I$, then we define the sum $B+s$ as
\[B+s \coloneqq \{d \in D \mid \exists \aa \in R_k\text{ s.t. } \aa_i \in B \land \aa_j=d\}\]
and the sum $B-s$ as
\[B+s \coloneqq \{d \in D \mid \exists \aa \in R_k\text{ s.t. } \aa_j \in B \land \aa_i=d\},\] where the subscript $\aa_i$ denotes the $i^\th$ coordinate of the tuple $\aa$.
If $p = s_1, \ldots, s_{\size{p}}$ is a cycle in $I$, then we define the sum $B+p$ as
\[B+p \coloneqq B+s_1 + \ldots + s_p\]
and the sum $B-p$ as
\[B-p \coloneqq B-s_p - \ldots - s_1.\]
\subsection{Clones}
Let $D$ be some domain.
An \textit{operation} $f$ is a function $f : D^k \to D$ for some positive integer $k$, known as its \textit{arity}.
An operation with arity 1 is \textit{unary}, an operation with arity 2 is \textit{binary}, and an operation with arity 3 is \textit{ternary}.
In general, an operation $f$ with arity $k$ is $k$-\textit{ary}, and its arity is denoted $\ar(f)$.
The output of an operation $f$ with inputs $x_1, x_2, \ldots, x_k$ is denoted \[f(x_1, x_2, \ldots, x_k).\]
Let $f : D^k \to D$ be an operation. We extend $f$ to an operation on vectors in $D^n$ by applying it coordinatewise, i.e.,
\[
    f\left(\begin{bmatrix}(a_1)_1\\(a_1)_2\\\vdots\\(a_1)_n\end{bmatrix},
    \begin{bmatrix}(a_2)_1\\(a_2)_2\\\vdots\\(a_2)_n\end{bmatrix},
    \ldots,
    \begin{bmatrix}(a_k)_1\\(a_k)_2\\\vdots\\(a_k)_n\end{bmatrix}
    \right) \coloneqq
    \begin{bmatrix}
    f((a_1)_1, (a_2)_1, \ldots, (a_k)_1) \\
    f((a_1)_2, (a_2)_2, \ldots, (a_k)_2) \\
    \vdots \\
    f((a_1)_n, (a_2)_n, \ldots, (a_k)_n)
    \end{bmatrix}.
\]
A $k$-ary operation $f: D^k \to D$ \textit{preserves} a relation $R$ if \[f(r_1, \ldots, r_k) \in R\] for all $r_1, \ldots, r_k \in R$ (note that the arity of $f$ has nothing to do with the arity of $R$). The relation \textit{generated} by $f$ with generators $\xx_1, \xx_2, \ldots$ is the smallest relation containing $\xx_1, \xx_2, \ldots$ that $f$ preserves, and is denoted \[\Sg{f}{\xx_1, \xx_2, \ldots}.\]

For every $k \in \ZZ^+$ and integer $1 \leq i \leq k$, the \textit{projection operation} $\pi_i^k : D^k \to D$ over a domain $D$ is defined as \[\pi_i^k(d_1, \ldots, d_k) \coloneqq d_i.\]
Note that $\pi_1^1$ is the identity operation over $D$. A \textit{clone} over a domain $D$ is a set $\fO$ of finite-arity operations that contains every projection operation over $D$ and is closed under multiple composition; that is, if $f \in \fO$ is an $m$-ary operation and $g_1, \ldots, g_m \in \fO$ are $n$-ary operations, then the operation \[h(x_1, \ldots, x_n) \coloneqq f(g_1(x_1, \ldots, x_n), \ldots, g_m(x_1, \ldots, x_n))\] is also in $\fO$.
Note that $\fO$ is closed under any ``natural'' way of composing operations because every projection operation is in $\fO$.
A clone is \textit{compatible} with a cyclic automorphism if there is a renaming defined by a cyclic permutation of the domain elements such that the renamed clone contains the same operations as the original clone.

A \textit{subclone} $\fO'$ of a clone $\fO$ is a subset of $\fO$ that is a clone.
The subclone is \textit{proper} if $\fO' \neq \fO$. The clone \textit{generated} by a set of operations $\{f_1, f_2, \ldots\}$ is the smallest clone containing $\{f_1, f_2, \ldots\}$ and is denoted
\[\gen{f_1, f_2, \ldots}.\]

An operation $f$ over a domain $D$ is \textit{idempotent} if \[f(\underbrace{x, \ldots, x}_{\ar(f)\: x\text{'s}}) = x\] for all $x \in D$. A clone $\fO$ is \textit{idempotent} if every operation in $\fO$ is idempotent.
A $k$-ary operation $f$ is \textit{symmetric} if \[f(x_1, \ldots, x_k) = f\left(x_{\sigma(1)}, \ldots, x_{\sigma(k)}\right)\] for all $x_1, \ldots, x_k \in D$ and permutations $\sigma : \{1, \ldots, k\} \to \{1, \ldots, k\}$.
We call a clone \textit{round} if it contains symmetric operations of every arity. We call a clone over a domain $D$ \textit{semi-round} if it contains symmetric operations of arities $1, 2, \ldots, \size{D}$. The relation \textit{generated} by a clone $\fO$ with generators $\xx_1, \xx_2, \ldots$ is the smallest relation that every operation in $\fO$ preserves, and it is denoted \[\Sg{\fO}{\xx_1, \xx_2, \ldots}.\]

Let $P$ be a property of a clone.
We say a clone is \textit{minimal} with respect to property $P$ if it does not contain a proper subclone with property $P$.
For example, a clone is \textit{minimally round} if it does not contain a proper round subclone, and a clone is \textit{minimally semi-round} if it does not contain a proper semi-round subclone. Lastly, a \textit{minimal idempotent round clone} is an idempotent clone that is minimally round, and a \textit{minimal idempotent semi-round clone} is an idempotent clone that is minimally semi-round.


\subsection{Algebraic concepts}

An \emph{algebraic structure} $\AA = (A, f_1, f_2, \ldots)$, also known as an \emph{algebra}, is a domain $A$, which we call the \textit{underlying set} of $\AA$, with some operations $f_i : A^{k_i} \rightarrow A$, which we call the \textit{basic operations} of $\AA$. Algebraic structures will always be written in blackboard bold. The sequence of arities $k_1, k_2, \ldots$ is called the \emph{signature} of the algebra $\AA$. 

Given an algebraic structure $\AA = (A, f_1, f_2, \ldots)$, we define the \emph{power} $\AA^m = (A^m, f_1, f_2, \ldots)$ to be an algebraic structure of the same signature, where each $f_i$ acts coordinatewise on $A^m$. A \emph{subalgebra} $\BB$ of an algebra $\AA$, denoted as $\BB \le \AA$, consists of a subset $B \subseteq A$ closed under the basic operations of $\AA$, and basic operations which are restrictions of the basic operations of $\AA$ to $B$. If $B$ is any subset of $A^m$, then we define the \emph{subalgebra generated by} $B$, denoted $\operatorname{Sg}_{\AA^m}(B)$, to be the smallest subalgebra of $\AA^m$ which contains $B$.

The \emph{clone} of an algebraic structure $\AA$, written $\Clo(\AA)$, is the clone generated by the basic operations of $\AA$. When we study algebraic structures, we will mainly be interested in properties which only depend on their clones instead of their basic operations.

A \emph{relation} $\RR \le \AA^m$ on an algebra $\AA$ will always refer to a subalgebra of $\AA^m$ for some integer $m$, known as the \emph{arity} of $\RR$. Alternatively, a relation $\RR \le \AA^m$ is a subset of $A^m$ which is \emph{compatible} with the basic operations of $\AA$; that is, for each basic operation $f_i$ of arity $k_i$ and for every choice of $k_i$ tuples $\aa^1, \ldots, \aa^{k_i} \in \RR$, we have $f(\aa^1, ..., \aa^{k_i}) \in \RR$, where $f$ acts coordinatewise. Whether or not a given set $R \subseteq A^m$ defines a relation compatible with the basic operations of $\AA$ only depends on the clone of $\AA$.

A \emph{congruence} $\theta$ on an algebraic structure $\AA$ is an equivalence relation $\theta \le \AA^2$ compatible with the basic operations of $\AA$, and the \emph{quotient} $\AA/\theta$ is an algebraic structure with domain $A/\theta$ with the basic operations defined in the natural way.

If $\RR \le \AA^m$ is a relation and $I$ is a subset of the coordinates $\{1, \ldots, m\}$, then we define the \emph{existential projection} $\pi_I(\RR)$ as
\[
\pi_I(\RR) \coloneqq \left\{\bm{x} \in A^{I} \mid \exists \bm{y} \in \RR\text{ s.t. }\bm{y}_i = \bm{x}_i \;\; \forall i \in I\right\}.
\]
For brevity, define $\pi_{i, j, \ldots}(\RR)$ to be $\pi_{\{i, j, \ldots\}}(\RR)$. A relation $\RR \le \AA^m$ is \emph{subdirect}, denoted $\RR \le_{sd} \AA^m$, if the $i^\th$ projection $\pi_i(\RR)$ is equal to $A$ for every integer $1 \leq i \le m$.

If $\RR, \SS \le \AA^2$ are binary relations, then we define their \emph{composition} as
\[
\RR \circ \SS \coloneqq \left\{(x,z) \in A^2 \mid \exists y \in A\text{ s.t. }(x,y) \in \RR \wedge (y,z) \in \SS\right\}.
\]
We define the \emph{reverse} of the binary relation $\RR$, denoted $\RR^-$, as
\[
\RR^- \coloneqq \{(y,x) \in A^2 \mid (x,y) \in \RR\}.
\]
If $\RR \le_{sd} \AA^2$, then we define the \emph{linking congruence} of $\RR$ on the first coordinate to be the congruence
\[
\bigcup_{n \ge 1} (\RR \circ \RR^-)^{\circ n}.
\]
The linking congruence on the second coordinate is defined similarly, with $\RR$ and $\RR^-$ swapped.
If $B \subseteq A$ is a set and $\RR \le \AA^2$ is a binary relation, then we define the sum $B + \RR$ as
\[
B + \RR \coloneqq \{y \in A \mid \exists x \in B\text{ s.t. } (x,y) \in \RR\}.
\]
and we define the difference $B - \RR$ as \[B - \RR \coloneqq B + \RR^- = \{x \in A \mid \exists y \in B\text{ s.t. }(x,y) \in \RR\}.\]

If $\mathcal{A}$ is a collection of algebraic structures which all have the same signature, then we define $P(\mathcal{A})$ to be the collection of all products of algebras in $\mathcal{A}$, $S(\mathcal{A})$ to be the collection of all subalgebras of algebras in $\mathcal{A}$, and $H(\mathcal{A})$ to be the collection of all homomorphic images of algebras in $\mathcal{A}$ (which is the collection of all algebras which are isomorphic to the quotient of some algebra $\AA$ in $\mathcal{A}$ by some congruence on $A$). If $\BB \in HS(\AA)$, then we call $\BB$ a \emph{subquotient} of $\AA$.

A clone is \textit{Taylor} if it contains idempotent operations that satisfy some functional equation that cannot be satisfied by projection operations. An algebra is called \emph{Taylor} if its clone is Taylor. By Birkhoff's $HSP$ theorem, an idempotent algebra is Taylor if and only if $HSP(\AA)$ does not contain a two-element algebra with each of its basic operations equal to a projection.


\subsection{Miscellaneous}
For sets $A$ and $B$, the set $A+B$ is defined as
\[A+B \coloneqq \{a + b \suchthat a \in A, b \in B\}.\]
The function $\sgn : \RR \to \RR$ is defined as
\[\sgn(x) \coloneqq \begin{cases} -1 & \text{if }x<0 \\ 0 & \text{if }x=0 \\ 1 & \text{if }x>0.\end{cases}\]
For a $(k-1)$-ary operation $f$, we define $c_f : D^k \to D^k$ as 
\begin{align*}
    c_f((x_1, \ldots, x_k)) \coloneqq (&f(\hspace{14pt} x_2, x_3, \ldots, x_{k-1}, x_k), \\
    &f(x_1, \hspace{14pt} x_3, \ldots, x_{k-1}, x_k), \\
    &f(x_1, x_2, \hspace{14pt} \ldots, x_{k-1}, x_k), \\
    &\hspace{56pt} \vdots, \\
    &f(x_1, x_2, x_3, \ldots, x_{k-1} \hspace{14pt})).
\end{align*}
We also define
\[c_f(x_1, \ldots, x_k) \coloneqq c_f((x_1, \ldots, x_k))\]
for convenience. Additionally, given a tuple $\xx = (x_1, \ldots, x_k)$ and a $k$-ary operation $f$, define \[f(\xx) \coloneqq f(x_1, \ldots, x_k).\]

We say an operation $f$ over a domain $D$ \textit{acts like a height-1 semilattice} over a subset $D' \subseteq D$ of its domain if it is idempotent and
\[f\left(x_1, \ldots, x_{\ar(f)}\right)\]
is the same fixed value $c \in D'$ over all non-constant tuples $\left(x_1, \ldots, x_{\ar(f)}\right) \in (D')^{\ar(f)}$. We say a clone \textit{acts like a height-1 semilattice} over a subset $D' \subseteq D$ of its domain if all its operations act like a height-1 semilattice over $D'$, and the constant $c$ is the same across all operations.

We say a binary operation $f$ over a domain $D$ \textit{acts linearly} over a subset $D' \subseteq D$ if $\size{D'}$ is odd and the domain elements of $f$, when restricted to the domain $D'$, can be renamed such that the new operation $f': (\ZZ/\size{D'}\ZZ)^2 \to \ZZ/\size{D'}\ZZ$ satisfies
\[f'(x, y) = \frac{x+y}{2} \pmod{\size{D'}}\]
for all $x, y \in \ZZ / \size{D'}\ZZ$.

A binary operation $f$ over a domain $D$ is a \textit{semilattice operation} if there exists a poset on $D$ such that $f$ represents ``join''; that is,
\[f(x, y) = x \land y\]
for all $x, y \in D$.

Lastly, for all $a, b, c \in \{-, 0, +\}$ we define $f_{abc}$ to be the symmetric binary operation
\begin{center}
    \binarythree{a}{b}{c},
\end{center}
and for $a, b, c, d \in \{0, 1, 2, 3\}$ we define $f_{abcdef}$ to be the symmetric binary operation
\begin{center}
    \binaryfour{a}{b}{c}{d}{e}{f}.
\end{center}

\section{Related Results}
Recall that our goal is to verify Conjecture \ref{conj} for all clones over a domain of size 4 or less; that is, our goal is to prove that all semi-round clones over a domain of size at most 4 are round.

The relevance of this problem to CSPs has been demonstrated in \cite{kun}. The authors prove that a CSP is solved by its linear programming relaxation if and only if the clone of all operations preserving the relations defining the CSP is round.

A related result about cyclic operations is proved in \cite{barto}. They prove that every Taylor algebra contains a cyclic operation of every prime arity greater than the size of its domain.

\bigskip

It turns out that the $\size{D}$ bound is tight when $\size{D}$ is prime.

\begin{proposition}\label{bound}
    Let $D$ be a domain with prime cardinality. Then there exists a clone $\mathcal O$ over $D$ containing symmetric operations of arities $1, 2, \ldots, \lvert D \rvert - 1$ that is not round.
\end{proposition}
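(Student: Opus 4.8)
The plan is to take $D = \ZZ/p\ZZ$, where $p \coloneqq \size{D}$ is prime, and let $\fO$ be the clone of idempotent affine operations over $\ZZ/p\ZZ$ — equivalently, $\fO = \gen{x - y + z}$, but it is more convenient to work with the description of $\fO$ as the set of all operations $f(x_1, \ldots, x_n) = \sum_{i=1}^n a_i x_i$ with $a_i \in \ZZ/p\ZZ$ and $\sum_{i=1}^n a_i \equiv 1 \pmod p$. First I would verify that $\fO$ is indeed a clone: it contains every projection $\pi_i^n$ (whose coefficient vector is a standard basis vector, with coordinate sum $1$), and substituting affine operations into an affine operation again yields an affine operation whose coefficients sum to $1$ — a one-line check on coefficient sums — so $\fO$ is closed under composition.

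Next I would exhibit the required symmetric operations of arities $1, 2, \ldots, p-1$. For each $1 \le k \le p-1$ we have $\gcd(k,p) = 1$, so $k$ is invertible modulo $p$; the operation $s_k(x_1, \ldots, x_k) \coloneqq k^{-1}(x_1 + \cdots + x_k)$ then lies in $\fO$ (all of its coefficients equal $k^{-1}$, and they sum to $k \cdot k^{-1} = 1$) and is manifestly symmetric. Hence $\fO$ is semi-round.

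Finally I would show that $\fO$ is not round by proving it has no symmetric operation of arity $p$. The key observation is that the coordinate functions $x_1, \ldots, x_n$ are linearly independent over the field $\ZZ/p\ZZ$, so an operation in $\fO$ has a unique affine representation and its symmetry is equivalent to invariance of its coefficient vector under all permutations of the coordinates, i.e. to all coefficients being equal. A symmetric operation of arity $p$ in $\fO$ would therefore be $a(x_1 + \cdots + x_p)$ with $p \cdot a \equiv 1 \pmod p$, which is impossible since $p \cdot a \equiv 0$. Thus $\fO$ is semi-round but not round. The argument is uniform in $p$ (in particular it covers $p = 2$, where one only needs the identity at arity $1$ and rules out a symmetric binary operation), and it is essentially routine; the main points needing a little care are the closure of $\fO$ under composition and the uniqueness of the affine representation, neither of which poses a real obstacle.
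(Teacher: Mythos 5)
Your proof is correct. Both you and the paper use the same construction (averaging operations $k^{-1}(x_1 + \cdots + x_k)$ over $\ZZ/p\ZZ$ for $k < p$), but the final step differs. The paper works with the clone $\gen{f_1, \ldots, f_{p-1}}$ generated by these averages, observes that every operation in it commutes with the shift $x \mapsto x+1$, and then derives the contradiction $f_p(0,1,\ldots,p-1) + 1 = f_p(1,\ldots,p-1,0) = f_p(0,1,\ldots,p-1)$ from this compatibility together with symmetry; this automorphism-based obstruction is more conceptual and applies to any clone compatible with a fixed-point-free cyclic automorphism. You instead take the full clone of idempotent affine operations, verify closure directly, and appeal to uniqueness of the affine representation: a symmetric $p$-ary affine idempotent operation would need all coefficients equal to some $a$ with $pa \equiv 1 \pmod p$, which is impossible. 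Your argument is more concrete and self-contained (no implicit "compatibility with the automorphism propagates to the generated clone" step), and because your clone contains the paper's it also establishes non-roundness of their clone; the paper's argument is slightly more flexible, in that it doesn't require pinning down the exact form of the operations in the clone. Both approaches are valid and essentially equivalent in difficulty.
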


\begin{proof}
By construction, we can force $\mathcal O$ to be compatible a cyclic automorphism. Let $p$ be the cardinality of the domain, let $D = \ZZ / p\ZZ$, and define the $k$-ary operation $f_k$ for $k \in \{1, 2, \ldots, \lvert D \rvert - 1\}$ to be any symmetric operation satisfying
\[f_k(x_1+c, \ldots, x_k+c) = f_k(x_1, \ldots, x_k)+c\]
for all $(x_1, \ldots, x_k) \in D^k$ and $c \in D$. For instance, we can take
\[
f_k(x_1, ..., x_k) \coloneqq \frac{x_1 + \cdots + x_k}{k} \pmod{p}.
\]
Then
\[\gen{f_1, f_2, \ldots, f_{p - 1}}\]
is compatible with a cyclic automorphism, so no $p$-ary symmetric operation $f_{p}$ can exist, as
\[f_{p}\left(x_1, \ldots, x_{p}\right)\]
cannot be preserved by the automorphism.
\end{proof}

\begin{remark}[Lemma 4 of \cite{carvalho}]\label{f4}
The theorem statement is false if $\lvert D \rvert$ is not required to be prime; the construction in \cite{carvalho} proves that a 4-ary symmetric operation $f_4(w, x, y, z)$ can be constructed from a binary symmetric operation $f_2(x, y)$ and a ternary symmetric operation $f_3(x, y, z)$:
\begin{align*}
    f_4(w, x, y, z) = f_3(&f_2(f_2(w, x), f_2(y, z)), \\
    &f_2(f_2(w, y), f_2(x, z)), \\
    &f_2(f_2(w, z), f_2(x, y)))
\end{align*}
is symmetric.
\end{remark}

\section{Preliminary Theorems}
These results will help us classify the semi-round clones over a domain of size at most four, up to a renaming of the domain elements.

\begin{theorem}\label{minimality}
    Let $\Sigma$ be a set of functional equations. Then every clone $\fO$ containing operations that simultaneously satisfy each functional equation in $\Sigma$ contains a minimal subclone containing operations that simultaneously satisfy each functional equation in $\Sigma$.
\end{theorem}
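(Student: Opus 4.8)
The plan is a Zorn's lemma argument on the poset $\mathcal{P}$ whose elements are the subclones of $\fO$ that contain operations simultaneously satisfying every equation in $\Sigma$, ordered by inclusion. A minimal element of $\mathcal{P}$ is exactly a minimal such subclone, and $\mathcal{P}$ is nonempty because $\fO \in \mathcal{P}$ by hypothesis, so it suffices to show that every chain $\mathcal{C} \subseteq \mathcal{P}$ has a lower bound lying in $\mathcal{P}$. The natural candidate is $\fO^{\ast} \coloneqq \bigcap \mathcal{C}$: an arbitrary intersection of clones is again a clone (it still contains every projection and is closed under composition), so $\fO^{\ast}$ is a subclone of $\fO$, and the only thing to verify is that $\fO^{\ast}$ still contains operations satisfying $\Sigma$.

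This last point I would prove by compactness, which is where finiteness of the domain $D$ enters (as it does everywhere in this paper). Write $F_1, F_2, \ldots$ for the function symbols occurring in $\Sigma$; since each symbol is applied to a fixed number of arguments wherever it occurs, it has a well-defined arity $k_j$, and since $D$ is finite the set $\fO_{k_j}$ of $k_j$-ary operations of $\fO$ is finite. Give $X \coloneqq \prod_j \fO_{k_j}$ the product of the discrete topologies on the factors; by Tychonoff's theorem $X$ is compact. An element $\phi \in X$ assigns an operation $\phi(F_j) \in \fO_{k_j}$ to each symbol, and ``$\phi$ satisfies $\Sigma$'' is a conjunction, over $\sigma \in \Sigma$, of conditions each depending on only finitely many coordinates and hence clopen; so the set $S$ of $\phi$ satisfying $\Sigma$ is closed. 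For a subclone $\fO'$ of $\fO$, the assignments with image in $\fO'$ form the closed set $\prod_j \fO'_{k_j}$, so $S(\fO') \coloneqq S \cap \prod_j \fO'_{k_j}$ is closed, and it is nonempty precisely when $\fO' \in \mathcal{P}$.

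Now $\{S(\fO') : \fO' \in \mathcal{C}\}$ is a chain of nonempty closed subsets of the compact space $X$, hence has the finite intersection property, so there is some $\phi \in \bigcap_{\fO' \in \mathcal{C}} S(\fO')$. This $\phi$ satisfies $\Sigma$ and has $\phi(F_j) \in \fO'$ for every $\fO' \in \mathcal{C}$, so $\phi(F_j) \in \fO^{\ast}$; thus $\fO^{\ast} \in \mathcal{P}$ and is a lower bound for $\mathcal{C}$. Zorn's lemma then yields a minimal element of $\mathcal{P}$. The only real obstacle is the passage to the limit along $\mathcal{C}$: a priori a decreasing chain of nonempty solution sets could have empty intersection, and it is precisely the finiteness of $D$ — forcing each $\fO_{k_j}$, and hence $X$, to be compact — that rules this out. (If one insists on allowing an infinite domain, the statement should be restricted accordingly; for the purposes of this paper $D$ is always finite.)
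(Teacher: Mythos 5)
Your proof is correct and takes essentially the same approach as the paper: both are Zorn's lemma arguments where the key step is showing the intersection of a chain of subclones still carries the required operations, and both secure that step via a compactness argument that hinges on $D$ being finite. The paper invokes the logical compactness theorem together with the observation that finitely many equations admit only finitely many solutions, whereas you package the same content topologically via Tychonoff and the finite intersection property; the two are interchangeable here.
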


\begin{proof}
    For a set $\Sigma_1$ of functional equations, let property $P_{\Sigma_1}$ of a clone $\fO$ denote the assertion that $\fO$ contains a set of operations simultaneously satisfying each equation in $\Sigma_1$.
    
    For any finite subset $\Sigma' \subseteq \Sigma$, there can only be finitely many combinations of operations in $\fO$ that satisfy $\Sigma$, since the domain is finite.
    Therefore, the intersection of every chain \[\fO_1 \supsetneq \fO_2 \supsetneq \ldots\] of clones with property $P_{\Sigma'}$ also has property $P_{\Sigma'}$ for every finite subset $\Sigma' \subseteq \Sigma$. By the logical compactness theorem, $\fO$ has property $P_\Sigma$ if $\fO$ has property $P_{\Sigma'}$ for every finite subset $\Sigma' \subseteq \Sigma$, which allows us to conclude that the intersection of every chain \[\fO_1 \supsetneq \fO_2 \supsetneq \ldots\] of clones with property $P_{\Sigma}$ also has property $P_{\Sigma}$. Therefore, every chain in the poset of subclones of $\fO$ with property $P_\Sigma$, ordered by reverse inclusion, has an upper bound, so Zorn's lemma implies the existence of a minimal clone with property $P_\Sigma$. 
\end{proof}

\begin{corollary}\label{minround}
    Every round clone contains a minimal round subclone. 
\end{corollary}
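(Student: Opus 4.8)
The statement is that every round clone contains a minimal round subclone. The plan is to derive this as an immediate special case of Theorem \ref{minimality}, whose proof we may assume. The key observation is that ``being round'' — containing symmetric operations of every arity — is itself expressible as satisfaction of a (countably infinite) set of functional equations. So the only real work is to exhibit such a set $\Sigma$ and check that a clone satisfies $\Sigma$ precisely when it is round.

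First I would write down, for each arity $k \geq 1$, a fresh operation symbol $g_k$ of arity $k$ together with the symmetry equations for $g_k$, namely $g_k(x_1,\ldots,x_k) = g_k(x_{\sigma(1)},\ldots,x_{\sigma(k)})$ ranging over all $\sigma$ in the symmetric group on $\{1,\ldots,k\}$ (it suffices to impose this for $\sigma$ a transposition of adjacent coordinates, since adjacent transpositions generate the symmetric group, but we need not optimize this). Let $\Sigma$ be the union over all $k$ of these equation sets. A clone $\fO$ contains operations simultaneously satisfying every equation in $\Sigma$ if and only if, for each $k$, $\fO$ contains a $k$-ary operation that is invariant under all permutations of its coordinates — that is, if and only if $\fO$ is round. (Here I am using that the $g_k$ are distinct symbols for distinct arities, so a simultaneous satisfying assignment is exactly a choice of one symmetric operation of each arity.)

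Then I would simply invoke Theorem \ref{minimality} with this $\Sigma$: every clone $\fO$ containing operations that simultaneously satisfy each equation in $\Sigma$ contains a minimal such subclone. Translating back through the equivalence of the previous paragraph, every round clone $\fO$ contains a subclone $\fO'$ that is round and is minimal with respect to being round, i.e., contains no proper round subclone. That is exactly the definition of a minimal round subclone given in the preliminaries, so we are done.

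I do not anticipate a genuine obstacle here, since Theorem \ref{minimality} already absorbed the Zorn's-lemma and logical-compactness content; the corollary is essentially a matter of recognizing ``round'' as a property of the form $P_\Sigma$. The one point to state carefully is that $\Sigma$ is infinite, so that the appeal to the compactness theorem inside the proof of Theorem \ref{minimality} is actually needed — a finite fragment $\Sigma'$ of $\Sigma$ only constrains finitely many of the $g_k$, and the descending-chain argument handles each such fragment because the domain, hence the set of operations of any fixed arity, is finite.
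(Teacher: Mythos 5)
Your proof is correct and follows the paper's approach exactly: write roundness as a set $\Sigma$ of symmetry equations (one family per arity) and invoke Theorem \ref{minimality}. The extra remarks about adjacent transpositions and the role of compactness are fine but not needed; the paper's proof is the same two-line reduction.
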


\begin{proof}
    Let $\Sigma$ be the set of functional equations
    \begin{gather*}
        f_2(x, y) = f_2(y, x) \\
        f_3(x, y, z) = f_3(x, z, y) = f_3(y, x, z) = f_3(y, z, x) = f_3(z, x, y) = f_3(z, y, x) \\
        \vdots,
    \end{gather*}
    where each line asserts the existence of a symmetric operation of some arity. Then $P_\Sigma$ is equivalent to roundness, so there exists a minimal round subclone by Theorem \ref{minimality}.
\end{proof}

\begin{corollary}\label{minsemiround}
    Every semi-round clone contains a minimal semi-round subclone.
\end{corollary}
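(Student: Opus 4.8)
The plan is to deduce this from Theorem \ref{minimality} in exactly the same way that Corollary \ref{minround} is deduced from it, the only difference being that here the relevant set of functional equations is finite. First I would let $\Sigma$ be the finite set of functional equations
\begin{gather*}
    f_2(x, y) = f_2(y, x), \\
    f_3(x, y, z) = f_3(x, z, y) = f_3(y, x, z) = f_3(y, z, x) = f_3(z, x, y) = f_3(z, y, x), \\
    \vdots, \\
    f_{\size{D}}(x_1, \ldots, x_{\size{D}}) = f_{\size{D}}\bigl(x_{\sigma(1)}, \ldots, x_{\sigma(\size{D})}\bigr) \text{ for every permutation } \sigma,
\end{gather*}
where the $k$-th line asserts the existence of a symmetric $k$-ary operation, for each $2 \le k \le \size{D}$. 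Since every clone already contains the identity operation $\pi_1^1$, which is (vacuously) a unary symmetric operation, a clone $\fO$ has property $P_\Sigma$ — in the notation used in the proof of Theorem \ref{minimality} — if and only if $\fO$ contains symmetric operations of arities $1, 2, \ldots, \size{D}$, i.e.\ if and only if $\fO$ is semi-round.

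Now I would simply apply Theorem \ref{minimality} to this $\Sigma$: every clone with property $P_\Sigma$ contains a minimal subclone with property $P_\Sigma$, which translates directly into the assertion that every semi-round clone contains a minimal semi-round subclone, as desired.

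I do not anticipate any real obstacle here; the only thing that needs checking is that semi-roundness is expressible as property $P_\Sigma$ for a set $\Sigma$ of functional equations, and that is immediate from the definition of semi-round. In fact, because this particular $\Sigma$ is finite, one does not even need the appeal to the logical compactness theorem that appears inside the proof of Theorem \ref{minimality}: over a finite domain there are only finitely many operations of each arity at most $\size{D}$, so any descending chain of semi-round subclones eventually stabilizes on a common witnessing tuple of symmetric operations of arities $1, 2, \ldots, \size{D}$, and Zorn's lemma, applied to the poset of semi-round subclones ordered by reverse inclusion, then produces a minimal element. Either route gives the corollary.
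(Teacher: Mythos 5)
Your argument is correct and is exactly the paper's proof: take $\Sigma$ to be the finite set of symmetry equations for arities $2,\ldots,\size{D}$ and invoke Theorem \ref{minimality}. Your observation that finiteness of $\Sigma$ makes the compactness step unnecessary is accurate but not needed, since Theorem \ref{minimality} already covers this case.
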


\begin{proof}
    This is the same as the proof of Corollary \ref{minround}, except the set of equations $\Sigma$ asserts the existence of symmetric operations of arities $1, 2, \ldots, \size{D}$, where $D$ is the domain, instead of symmetric operations of every arity.
\end{proof}

\begin{theorem}\label{idempotent}
    Suppose $\fO$ is a minimal counterexample to Conjecture \ref{conj} over the smallest possible domain. Then $\fO$ is idempotent.
\end{theorem}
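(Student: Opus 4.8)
The plan is to show that the only unary operation in $\fO$ is the identity $\pi_1^1$, which suffices: for every $k$-ary $f \in \fO$ the map $x \mapsto f(x, \ldots, x)$ is a unary operation in $\fO$, so if all such operations are the identity then every operation of $\fO$ is idempotent. I would distinguish two cases according to whether $\fO$ contains a unary operation that fails to be a bijection of $D$.

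\emph{The bijective case.} Suppose every unary operation of $\fO$ is a permutation of $D$; then the unary part of $\fO$ is a finite group $G$. For each $k \in \{1, \ldots, \size D\}$ pick a symmetric $f_k \in \fO$ of arity $k$ (available since $\fO$ is semi-round) and set $g_k(x) \coloneqq f_k(x, \ldots, x) \in G$. Since $G$ is a finite group, $g_k^{-1}$ is a power of $g_k$ and so lies in $\fO$, and $g_k^{-1} \circ f_k$ is then a symmetric \emph{and} idempotent operation of arity $k$ in $\fO$. Hence $\fO$ contains idempotent symmetric operations of every arity $1, \ldots, \size D$; let $\fO'$ be the subclone they generate. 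A clone generated by idempotent operations is idempotent, so $\fO'$ is idempotent and semi-round, and --- as a subclone of the non-round clone $\fO$ --- not round, hence itself a counterexample to Conjecture \ref{conj}. Minimality of $\fO$ forces $\fO' = \fO$, so $\fO$ is idempotent.

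\emph{The non-bijective case} is the heart of the argument. Let $u \in \fO$ be a unary operation that is not surjective. Some power $e \coloneqq u^m$ with $m \ge 1$ is idempotent (as $\fO$ is finite), and $e$ is still non-surjective (its image lies in that of $u$), so $E \coloneqq e(D)$ has $\size E < \size D$. I would pass to the retract: let $\fO^E$ be the clone on $E$ generated by all operations $(x_1, \ldots, x_k) \mapsto e\bigl(f(x_1, \ldots, x_k)\bigr)$ with $f \in \fO$. Because $e$ fixes $E$ pointwise, a routine induction shows that every operation of $\fO^E$ is the restriction to $E^n$ of $e \circ t$ for some $n$-ary term operation $t$ of $\fO$ (this holds for projections and for the generators of $\fO^E$, and the class of such operations is closed under composition). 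Moreover $\fO^E$ is semi-round over $E$: for $k \le \size E \le \size D$ the operation $e \circ f_k$ restricted to $E^k$ is symmetric of arity $k$ and lies in $\fO^E$. Since $\size E < \size D$ and $D$ is the smallest domain on which a non-round semi-round clone exists, $\fO^E$ must be round.

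Finally I would lift roundness of $\fO^E$ back to $\fO$. Given an arity $n$, choose a symmetric $g_n \in \fO^E$ and write it as the restriction of $e \circ t_n$ to $E^n$ for an $n$-ary term operation $t_n$ of $\fO$; then $h_n(x_1, \ldots, x_n) \coloneqq e\bigl(t_n(e(x_1), \ldots, e(x_n))\bigr)$ is a term operation of $\fO$, and for all $x_1, \ldots, x_n \in D$ one has $h_n(x_1, \ldots, x_n) = g_n(e(x_1), \ldots, e(x_n))$. As $g_n$ is symmetric and permuting the $x_i$ permutes the $e(x_i)$, $h_n$ is symmetric, so $\fO$ contains a symmetric operation of every arity --- contradicting that $\fO$ is not round. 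Hence the non-bijective case cannot occur, and by the bijective case $\fO$ is idempotent. I expect the only genuinely delicate points to be the verification that $\fO^E$ is semi-round (so that domain-minimality may be invoked) and the bookkeeping that identifies operations of $\fO^E$ as restrictions of $e \circ t$ for term operations $t$ of $\fO$, which is exactly what guarantees that the lifted operations $h_n$ belong to $\fO$.
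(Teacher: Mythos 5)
Your proof is correct and follows the same two-case structure as the paper's: show the only unary in $\fO$ is $\pi_1^1$, splitting on whether every unary of $\fO$ is a permutation. In the bijective case the two arguments are nearly identical (you post-compose $f_k$ with $g_k^{-1}$, the paper pre-composes with a power of $u_k$; either way one obtains idempotent symmetric operations of arities $1,\ldots,\size D$ and then invokes clone-minimality). In the non-bijective case the paper is terse: it identifies a single pair $a,b$ with $u^N(a)=u^N(b)=a$, asserts that the subclone $\gen{g_1,\ldots,g_{\size D}}$ is ``effectively a clone on $D\setminus\{b\}$'', and declares a contradiction with domain-minimality without spelling out why that effective clone is a counterexample, nor how roundness of the smaller clone would contradict non-roundness of $\fO$. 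You instead retract to the full image $E=e(D)$ of an idempotent power, verify carefully that $\fO^E$ is a genuine clone on $E$ whose operations are restrictions of $e\circ t$ for $\fO$-terms $t$, check semi-roundness of $\fO^E$, apply domain-minimality to get roundness of $\fO^E$, and then explicitly lift symmetric operations back to $\fO$ via $h_n = e\circ t_n\circ(e,\ldots,e)$. This supplies exactly the bookkeeping the paper leaves implicit, and is the more rigorous rendering of the same idea.
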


\begin{proof}
    This is a corollary of a folklore result that seems to have first shown up in the context of CSPs in \cite{bulatov}, but we reproduce it here. It suffices to show that the only unary operation in a minimal counterexample $\fO$ is $\pi_1^1$. To do this, let the domain be $D$ and let $f_k$ be a $k$-ary symmetric operation in $\fO$ for each $k \in \{1, 2, \ldots, \size{D}\}$. If there is a non-identity unary operation $u(x) \in \fO$, then either some unary operation is not injective or all unary operations are permutations.
    
    If all unary operations are permutations, then one can find idempotent symmetric operations of arities $1, 2, \ldots, \size{D}$. To do this, define
    \[u_k(x) = f_k(\underbrace{x, \ldots, x}_{k \text{ times}}),\]
    which must be a permutation. Then $u_k^{N_k}(x) = x$ for some positive integer $N_k$. Define
    \[g_k(x_1, \ldots, x_k) = f_k\left(u_k^{N_k-1}(x_1), \ldots, u_k^{N_k-1}(x_k)\right)\]
    and note that $g_k$ is idempotent for each $k$. Finally,
    \[\gen{g_1, \ldots, g_k} \subsetneq \mathcal O\]
    is an idempotent semi-round proper subclone of $\fO$ which is also a counterexample to the conjecture, contradicting the minimality of $\fO$.
    
    If there is a unary operation $u(x)$ that is not injective, then $u^{2N}(x) = u^N(x)$ for some positive integer $N$, and $u^N(a) = u^N(b) = a$ for some distinct $a, b \in D$. Define
    \[g_k(x_1, \ldots, x_k) = u^N\left(f_k\left(u^N(x_1), \ldots, u^N(x_k)\right)\right)\]
    and note that each $g_k$ is a symmetric operation that acts on $a$ and $b$ the same way; that is, replacing $a$ with $b$ or $b$ with $a$ in $g_k$'s input will not change its output. Since
    \[\gen{g_1, \ldots, g_k} \subseteq \fO\]
    is effectively a clone on the domain $D \setminus \{b\}$, $\fO$ is not minimal with respect to domain size, contradiction.
\end{proof}

\begin{proposition}\label{semilatticeoperation}
    Let $f$ be a semilattice operation. Then $\gen{f}$ is round.
\end{proposition}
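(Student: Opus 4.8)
The plan is to write down explicit symmetric operations of every arity inside $\gen{f}$. The starting observation is that a semilattice operation is not merely idempotent and commutative but also associative: since $f(x,y)$ is by definition the least upper bound of $\{x,y\}$ in some fixed poset on $D$, both $f(f(x,y),z)$ and $f(x,f(y,z))$ compute the least upper bound of $\{x,y,z\}$, so they agree. (Equivalently, one may simply take $f$ to be an idempotent, commutative, associative binary operation; this is the usual algebraic definition of a semilattice and matches the one given above.)

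Next I would define, for each $k \ge 1$, a $k$-ary operation $g_k \in \gen{f}$ by iterating $f$. Set $g_1(x) \coloneqq x$, which is the projection $\pi_1^1$ and is vacuously symmetric, and for $k \ge 2$ set
\[
g_k(x_1, \ldots, x_k) \coloneqq f\bigl(x_1,\, g_{k-1}(x_2, \ldots, x_k)\bigr).
\]
Each $g_k$ is obtained from $f$ by composition with projections, so $g_k \in \gen{f}$. Using associativity one shows by induction on $k$ that $g_k(x_1, \ldots, x_k)$ equals any fully parenthesized $f$-product of the arguments $x_1, \ldots, x_k$ in any order; equivalently, $g_k(x_1, \ldots, x_k)$ is the least upper bound of $\{x_1, \ldots, x_k\}$ in the underlying poset. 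In particular $g_k$ is idempotent, and permuting the arguments leaves the output unchanged, so $g_k$ is symmetric.

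Hence $\gen{f}$ contains a symmetric operation of arity $k$ for every $k \ge 1$, which is precisely the assertion that $\gen{f}$ is round. The only point needing care is the justification that iterated applications of $f$ are order-independent, and that is immediate from commutativity together with associativity; I do not anticipate any genuine obstacle beyond this bookkeeping.
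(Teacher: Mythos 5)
Your proof is correct and takes essentially the same approach as the paper: both build the $k$-ary symmetric operation by iterating $f$ (your recursion peels off the first argument, the paper's the last) and both justify symmetry by noting the result computes the extremum of the $k$ inputs in the underlying poset, which is invariant under reordering.
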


\begin{proof}
    Consider the poset determined by $f$. By induction, the symmetric $k$-ary operation $f_k$ that returns the greatest upper bound of its $k$ inputs is in the clone.
    
    Indeed, $f_1 = \pi_1^1$ and $f_2 = f$. Now
    \[f_{k+1}(x_1, \ldots, x_{k+1}) = f_2(f_k(x_1, \ldots, x_k), x_{k+1})\]
    by induction.
\end{proof}

\begin{theorem}\label{roundsemilattice}
    Let $\fO$ be an idempotent round clone over a domain $D$, and suppose some binary operation in $\fO$ acts like a height-1 semilattice over some subset $D' \subseteq D$. Then $\fO$ contains a round subclone that acts like a height-1 semilattice over $D'$.
\end{theorem}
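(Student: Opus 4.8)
The plan is to build, inside $\fO$, a symmetric operation of every arity that acts like a height-1 semilattice over $D'$ with a single fixed absorbing constant, and then to take the subclone these operations generate. To set up, let $f \in \fO$ be the given binary operation and let $c \in D'$ be the value $f$ takes on every non-constant pair from $D'$. Idempotence gives $f(x,x) = x$ for $x \in D'$, so $f$ maps $(D')^2$ into $D'$ (its value is always $x$ or $c$) and $c$ is absorbing for $f$ on $D'$: $f(c,x) = f(x,c) = c$ for every $x \in D'$.

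Next I would form, for each $k \ge 2$, the ``all-equal detector'' $e_k \in \fO$ defined by $e_2 \coloneqq f$ and $e_k(x_1,\dots,x_k) \coloneqq f\bigl(x_1, e_{k-1}(x_2,\dots,x_k)\bigr)$. An easy induction on $k$, using idempotence of $f$ and the absorbing property of $c$, shows that for every $\xx \in (D')^k$ we have $e_k(\xx) = x$ if $x_1 = \dots = x_k = x$ and $e_k(\xx) = c$ otherwise; in particular $e_k$ preserves $D'$. The operation $e_k$ need not be symmetric on all of $D$, and its behaviour off $D'$ is irrelevant.

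Now I would symmetrize through a genuinely symmetric operation. Since $\fO$ is round, fix a symmetric $s_m \in \fO$ of each arity $m$, and for $k \ge 2$ set
\[
g_k(x_1,\dots,x_k) \coloneqq s_{k!}\Bigl( \bigl( e_k(x_{\sigma(1)},\dots,x_{\sigma(k)}) \bigr)_{\sigma \in S_k} \Bigr)
\]
for a fixed enumeration of the $k!$ permutations, and set $g_1 \coloneqq \pi_1^1$. Each $g_k$ is in $\fO$; each $g_k$ is symmetric, since $s_{k!}$ is symmetric and a permutation of $x_1,\dots,x_k$ merely permutes the $k!$ arguments passed to $s_{k!}$; and each $g_k$ is idempotent, since on a constant input all arguments of $s_{k!}$ collapse to that constant. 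Moreover $g_k$ acts like a height-1 semilattice over $D'$ with constant $c$: if $x_1 = \dots = x_k = x \in D'$ then every argument of $s_{k!}$ equals $e_k(x,\dots,x) = x$, so $g_k = s_{k!}(x,\dots,x) = x$ by idempotence; and if $x_1,\dots,x_k \in D'$ are not all equal, then no permutation of them is constant, so every argument equals $c$ and $g_k = s_{k!}(c,\dots,c) = c$. Consequently $\fO' \coloneqq \gen{g_1, g_2, g_3, \dots} \le \fO$ is round, each of its generators acts like a height-1 semilattice over $D'$ with the common constant $c$, and $D'$ is a subalgebra of $\fO'$ — so $\fO'$ is the desired subclone.

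The main obstacle, and the reason the construction has to look like this, is that one cannot simply ``repair'' a symmetric $s_k$ by composing it with $f$: the operation $s_k$ need not preserve $D'$, so feeding its output into $f$ destroys all control over the result. The construction avoids this by evaluating a symmetric operation only on constant tuples (all equal to $x$, or all equal to $c$), where idempotence alone suffices, and by letting the $f$-term $e_k$ — which does preserve $D'$ — carry out the actual work of recognising ``not all equal'' and collapsing it to $c$. The remaining subtlety is that symmetry is required on all of $D$, not just on $D'$; that is exactly why symmetrization goes through the symmetric operation $s_{k!}$ rather than through any reordering of the non-symmetric $e_k$.
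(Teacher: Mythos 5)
Your proof is correct, but it takes a genuinely different route from the paper's. The paper symmetrizes the given binary operation to obtain $g_2 = f_2(t_2(x,y), t_2(y,x))$ and then runs an induction $g_k = f_k \circ c_{g_{k-1}}^2$, where $c_{g_{k-1}}$ is the map that replaces each coordinate of a $k$-tuple by $g_{k-1}$ applied to the other $k-1$ coordinates; the crucial point is that only the arity-$k$ symmetric operation $f_k$ is used to build $g_k$. You instead build a \emph{non-symmetric} ``all-equal detector'' $e_k$ directly from $f$ by iterated composition, and restore symmetry in a single step by feeding all $k!$ permuted instances of $e_k$ into a symmetric operation $s_{k!}$ of arity $k!$. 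Your approach cleanly decouples the semilattice behaviour on $D'$ (handled by $e_k$) from symmetry on all of $D$ (handled by $s_{k!}$), which is conceptually transparent. The trade-off is arity: because you invoke $s_{k!}$, your construction requires symmetric operations of factorial arity, so it proves the round case (the stated theorem) but does \emph{not} transfer to the semi-round Corollary~\ref{semiroundsemilattice}, where only symmetric operations of arity up to $\size{D}$ are guaranteed. The paper's induction, using only $f_1,\dots,f_k$, yields both the theorem and the corollary with the same argument. Your claim that the resulting subclone ``acts like a height-1 semilattice over $D'$'' is stated at the same level of informality as in the paper (projections of arity $\ge 2$ technically fail the definition), so no additional gap is introduced there.
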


\begin{proof}
    Let $t_2$ be the given binary operation, and for each $k \in \ZZ^+$, let $f_k$ denote a $k$-ary symmetric operation in $\fO$. 
    Then the binary operation
    \[g_2(x, y) = f_2(t_2(x, y), t_2(y, x))\]
    is symmetric and acts like a height-1 semilattice over $D'$. By induction, the $k$-ary operation
    \[g_k(x_1, \ldots, x_k) = f_k\left(c_{g_{k-1}}^2(x_1, \ldots, x_k)\right)\]
    is symmetric and acts like a height-1 semilattice over $D'$. (Recall that $c_{g_{k-1}}$ is the function that takes a $k$-tuple and returns the $k$-tuple whose $i^\th$ entry is $g_{k-1}$ applied on the $k-1$ variables not in the $i^\th$ coordinate, for all $1 \leq i \leq k$.) Then
    \[\gen{g_1, g_2, \ldots}\]
    is a round subclone of $\fO$ that acts like a height-1 semilattice over $D'$, as desired.
\end{proof}

\begin{corollary}\label{semiroundsemilattice}
    Let $\fO$ be an idempotent semi-round clone over a domain $D$, and suppose some binary operation in $\fO$ acts like a height-1 semilattice over some subset $D' \subseteq D$. Then $\fO$ contains a semi-round subclone that acts like a height-1 semilattice over $D'$.
\end{corollary}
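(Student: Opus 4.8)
The plan is to imitate the proof of Theorem~\ref{roundsemilattice} essentially verbatim, simply stopping the inductive construction of the symmetric operations at arity $\size{D}$ instead of continuing it to all arities. Let $t_2 \in \fO$ be the binary operation that acts like a height-1 semilattice over $D'$, say with constant $c \in D'$, and for each $k \in \{1, 2, \ldots, \size{D}\}$ let $f_k \in \fO$ be a symmetric $k$-ary operation, which exists because $\fO$ is semi-round (and note $f_1 = \pi_1^1$ since $\fO$ is idempotent). Set $g_1 = \pi_1^1$, set
\[
g_2(x, y) = f_2(t_2(x, y), t_2(y, x)),
\]
and for $3 \le k \le \size{D}$ set
\[
g_k(x_1, \ldots, x_k) = f_k\!\left(c_{g_{k-1}}^2(x_1, \ldots, x_k)\right),
\]
exactly as in the proof of Theorem~\ref{roundsemilattice}.

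The one thing to observe is that each step of this recursion refers only to $f_k$ and $g_{k-1}$, both of which are available as soon as $k \le \size{D}$, so the construction terminates cleanly at arity $\size{D}$ and never calls on a symmetric operation of arity greater than $\size{D}$. The verification that each $g_k$ is symmetric and acts like a height-1 semilattice over $D'$, all with the same constant $c$, is word for word the same as in the proof of Theorem~\ref{roundsemilattice}: $g_2$ sends every non-constant pair over $D'$ to $c$ by idempotence of $f_2$, and the operators $c_{g_{k-1}}$, applied twice, collapse every non-constant tuple over $D'$ to the constant tuple $(c, \ldots, c)$, which $f_k$ fixes.

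Hence $\gen{g_1, g_2, \ldots, g_{\size{D}}}$ is a subclone of $\fO$ containing a symmetric operation of each arity $1, 2, \ldots, \size{D}$ --- so it is semi-round --- and, by the same reasoning used for the round case in Theorem~\ref{roundsemilattice}, it acts like a height-1 semilattice over $D'$; this is precisely the claimed subclone. I do not expect a genuine obstacle here: all of the content is in the observation that the recursion of Theorem~\ref{roundsemilattice} is ``local'' in the arity (the arity-$k$ operation only uses the operations of arities $k-1$ and $k$), so truncating it at $\size{D}$ loses nothing.
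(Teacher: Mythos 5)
Your proof is correct and follows exactly the same route as the paper, which simply observes that the induction of Theorem~\ref{roundsemilattice} may be truncated at arity $\size{D}$; your additional remark that each step of the recursion only calls on operations of arities $k-1$ and $k$ is a useful clarification of why the truncation is harmless.
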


\begin{proof}
    This is the essentially the same as the proof of Theorem \ref{roundsemilattice}, except the induction stops after constructing the $\size{D}$-ary symmetric operation.
\end{proof}

\begin{theorem}\label{lin}
    Let $\mathcal O$ be an idempotent clone over a domain $D$ with a binary symmetric operation $f_2$ that acts linearly on a subset $D' \subseteq D$ with odd prime cardinality, and suppose $\mathcal O$ contains a $\lvert D' \rvert$-ary symmetric operation $f_{\lvert D' \rvert}$. Then one can find symmetric operations $g_1, g_2, \ldots \in \fO$ and a constant $c$ such that
    \[g_k(x_1, \ldots, x_k) = \begin{cases} x & \text{if $\{x_1, x_2, \ldots, x_k\} = \{x\}$}\\ c & \text{otherwise}\end{cases}\]
    for all $k \in \ZZ^+$ and tuples $(x_1, x_2, \ldots, x_k) \in (D')^k$.
\end{theorem}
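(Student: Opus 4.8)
The plan is to do everything on $D'$, which I identify with $\ZZ/p\ZZ$ for $p := \size{D'}$, so that $f_2(x,y) = \frac{x+y}{2} \bmod p$ and $f_p := f_{\size{D'}}$ is the given symmetric operation of arity $p$. Every $g_k$ I produce will be an explicit composition of $f_2$ and $f_p$, so membership in $\fO$ is automatic, and the whole problem is to control the restrictions to $(D')^k$ while keeping the $g_k$ symmetric on all of $D$. First I record what $f_2$ alone gives on $D'$: since $2$ is invertible mod $p$, a balanced binary $f_2$-tree of depth $d$ (any multiple of the order of $2$ mod $p$) is an operation $\sigma$ of arity $N = 2^d$ whose restriction to $(D')^N$ is $x_1 + \cdots + x_N \bmod p$, and more generally $f_2$-trees with repeated leaves realise on $D'$ every affine combination $\sum_i w_i x_i$ with $\sum_i w_i \equiv 1 \pmod p$. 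These are symmetric only under the automorphism group of the tree, not the full symmetric group; repairing this is exactly where $f_p$ is needed, and it is also the key point forcing non-linear behaviour, because $\ZZ/p\ZZ$ carries \emph{no} symmetric idempotent affine operation of arity $p$ (all coefficients would have to equal $p^{-1}$, which does not exist).

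The crux is to produce a single operation of $\fO$ that is symmetric on $D$ and acts as a height-$1$ semilattice on $D'$, i.e.\ equals a fixed constant $c \in D'$ on every non-constant tuple from $D'$. The idea is to combine $f_p$ with the affine operations above so that symmetry and idempotence clash with affineness and force a collapse: one tracks the relation $\Sg{\fO}{(a,b)}$ generated by $\fO$ from a single non-diagonal pair of elements of $D'$ (or the analogue with a $p$-tuple), using the $\ZZ/p\ZZ$-module structure that $f_2$ imposes on $D'$ together with the symmetry of $f_p$ to pin down a value $c$ that every symmetric operation of $\fO$ must output on all non-constant $D'$-tuples. (If $f_p$ fails to send $D'$ into $D'$, one first replaces $D$ by $\Sg{\fO}{D'}$ and argues there.) Once such a $g$ is in hand, a binary one follows cheaply: $g_2(x,y) := f_2\bigl(g(x,y,\ldots,y),\, g(y,x,\ldots,x)\bigr)$ is symmetric on $D$ because $f_2$ is, and on $D'$ it returns $x$ when $x = y$ and $c$ otherwise.

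From there I build $g_k$ of every arity by the device in the proof of Theorem~\ref{roundsemilattice}: given a symmetric $g_{k-1}$ that is height-$1$ semilattice on $D'$, set $g_k := F_k\bigl(c_{g_{k-1}}^2(x_1,\ldots,x_k)\bigr)$ for any symmetric $k$-ary $F_k \in \fO$; since $c_{g_{k-1}}$ carries a permutation of the inputs to the same permutation of its output coordinates and $g_{k-1}$ is symmetric, $g_k$ is symmetric on $D$, and a short case-check — using that $c_{g_{k-1}}^2$ collapses every non-constant tuple of $(D')^k$ to the constant tuple $(c,\ldots,c)$ — shows $g_k$ is again height-$1$ semilattice on $D'$ with the same $c$. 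The remaining chore is to manufacture a symmetric $k$-ary $F_k \in \fO$ for every $k$ from $f_2$ and $f_p$: arity $4$ comes from Remark~\ref{f4}, and in general one symmetrises the tree-operations $\sigma$ by feeding symmetric-group orbits of small affine operations through $f_p$ and its iterates. I expect the main obstacle to be the collapsing argument of the second paragraph: extracting the constant $c$ forces one to understand precisely how a symmetric operation of arity exactly $p$ can interact with the $\ZZ/p\ZZ$-affine structure on $D'$ (and to handle the case where $f_p$ escapes $D'$), with a secondary difficulty being the construction of fully symmetric seed operations $F_k$ of all arities from $f_2$ and $f_p$ alone.
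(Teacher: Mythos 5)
You correctly identify both the $f_2$-tree picture on $D'$ (affine combinations with coefficient sum $1 \bmod p$, obtained via Fermat's Little Theorem since $2^{p-1} \equiv 1$) and the fact that the whole difficulty is producing a symmetric operation that is height-$1$-semilattice on $D'$. But you leave that ``crux'' unresolved, and it is precisely the step the paper settles with a single identity your sketch is missing. Realize, for $a = 1, \ldots, p$, binary operations $h_a \in \fO$ from $f_2$-trees with $h_a(x,y) \equiv a x - (a-1) y \pmod p$ on $D'$, and set
\[
g_2(x,y) \coloneqq f_{\lvert D' \rvert}\bigl(h_1(x,y), h_2(x,y), \ldots, h_p(x,y)\bigr).
\]
For distinct $x,y \in D'$ the map $a \mapsto a(x-y)+y$ is a bijection of $\ZZ/p\ZZ$, so $\bigl(h_1(x,y),\ldots,h_p(x,y)\bigr)$ is a permutation of $D'$; symmetry of $f_p := f_{\lvert D' \rvert}$ then forces $g_2(x,y) = f_p(d_1,\ldots,d_p) \eqqcolon c$ for \emph{every} non-diagonal pair in $(D')^2$, while idempotence gives $g_2(x,x)=x$. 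No analysis of how $f_p$ ``interacts with the affine structure,'' no generated relation $\Sg{\fO}{(a,b)}$, and no module-theoretic clash is needed. You also cannot assume $c \in D'$: in the downstream application (Lemma \ref{nolin}) the hard case is exactly $c \notin D'$, and passing to $\Sg{\fO}{D'}$ does not move $c$ into $D'$, so that part of your plan is off-track.

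Your inductive step has a second real hole, which you flag but do not close: the $c_{g_{k-1}}^2$ device of Theorem \ref{roundsemilattice} consumes a symmetric $k$-ary seed $F_k \in \fO$ for every $k$, but the hypotheses of this theorem supply symmetric operations only of arities $1$, $2$, and $p$, and ``symmetrise the tree-operations $\sigma$ through $f_p$ and its iterates'' is not a construction. The paper recurses through $g_2$ and $f_2$ alone,
\[
g_{k+1}(x_1,\ldots,x_{k+1}) \coloneqq g_2\bigl(g_k(x_1,\ldots,x_k),\; g_k(f_2(x_1,x_{k+1}),\ldots,f_2(x_k,x_{k+1}))\bigr),
\]
which has the claimed values on $(D')^{k+1}$ by a two-case check (whether $x_1=\cdots=x_k$ or not), using that $f_2(\cdot,x_{k+1})$ is a bijection of $D'$; this eliminates the need for the seeds $F_k$ entirely.
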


\begin{proof}
    We prove this by induction on $k$. Define $c := f_{\lvert D' \rvert}(d_1, \ldots, d_{\size{D'}})$ where $D' = \{d_1, \ldots, d_{\size{D'}}\}$, and for $k=1$ note $g_1 = \pi_1^1$. For the $k=2$ construction, assume $D' = \{0, 1, \ldots, p-1\}$ for some odd prime $p$.
    Since $f_2(x, y) = \frac{x+y}{2} \pmod{p}$ over $D'$,
    one can construct any operation that acts like
    \[\frac{a}{2^b}x + \frac{2^b-a}{2^b}y \pmod{p}\]
    over $D'$ by composing $f_2$ with itself for any $a, b \in \ZZ^+$ with $0 \leq a \leq 2^b$; in particular, by choosing $b={p-1}$, the operation
    \[ax - (a-1)y \pmod{p}\]
    can be constructed using only $f_2$ for any $a \in \ZZ$, by Fermat's Little Theorem. Finally, define
    \[g_2(x, y) \coloneqq f_k(x, 2x-y, 3x-2y, \ldots, px - (p-1)y) \pmod{p};\]
    this works because
    \[\{x, 2x-y, 3x-2y, \ldots, px - (p-1)y\} = \{0, 1, \ldots, p-1\} \pmod{p}\]
    whenever $x - y \neq 0$, by the primality of $p$.
    To construct $g_{k+1}$ given $g_{k}$, define
    \begin{align*}
        g_{k+1}(x_1, x_2, \ldots, x_{k+1}) \coloneqq g_2(& g_k(x_1, x_2, \ldots, x_k), \\
        & g_k(f_2(x_1, x_{k+1}), f_2(x_2, x_{k+1}), \ldots, f_2(x_k, x_{k+1}))).
    \end{align*}
    This works by casework on whether or not $x_1 = x_2 = \ldots = x_k$; if they are not all equal, then both arguments of $g_2$ are $c$, and if they are all equal, then both arguments of $g_2$ are distinct elements of $D'$.
\end{proof}

\begin{lemma}\label{nolin}
    Suppose a semi-round clone $\fO$ over a domain $D$ with $\size{D} \leq 4$ contains a binary symmetric operation that acts linearly on a three-element set $D' \subseteq D$. Then $\fO$ is not minimally semi-round.
\end{lemma}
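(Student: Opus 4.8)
The plan is to combine Theorem~\ref{lin} with Corollary~\ref{semiroundsemilattice}: once we know $\fO$ contains a binary operation acting like a height-1 semilattice over a set $S$ that includes $D'$, we obtain a semi-round subclone of $\fO$ with the same property, and this subclone cannot contain the original $f_2$ (a linear operation on $\ZZ/3\ZZ$ is visibly not a height-1 semilattice operation), so it is a \emph{proper} semi-round subclone.

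We may assume $\fO$ is idempotent. Rename the elements of $D'$ so that $D' = \ZZ/3\ZZ$ and $f_2(x,y)\equiv\frac{x+y}{2}\pmod 3$ on $D'$; concretely $f_2$ sends any two distinct elements of $D'$ to the third element of $D'$, so $f_2$ preserves $D'$ and, writing $D' = \{d_1,d_2,d_3\}$, satisfies $f_2(d_1,d_2) = d_3 \ne d_2 = f_2(d_1,d_3)$ — that is, $f_2$ takes two different values on non-constant tuples of $(D')^2$. Since $\fO$ is semi-round it contains a ternary symmetric operation, so Theorem~\ref{lin} applies with the three-element set $D'$ (whose cardinality $3$ is an odd prime) and yields symmetric operations $g_1,g_2,\ldots\in\fO$ and a constant $c$ with $g_k$ equal to the common value on every constant tuple of $(D')^k$ and equal to $c$ on every non-constant tuple of $(D')^k$. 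In particular the binary operation $g_2\in\fO$ is symmetric, idempotent, and constant (equal to $c$) on the non-constant tuples of $(D')^2$.

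If $c\in D'$ then $g_2$ acts like a height-1 semilattice over $D'$; this holds automatically when $\size D = 3$ (then $D' = D$), and possibly also when $\size D = 4$. When $\size D = 4$ and $c\notin D'$ we have $D = D'\cup\{c\}$, and here I would argue — using the $4$-ary symmetric operation available by semi-roundness together with further compositions of the $g_k$ — that a small modification of $g_2$ also sends every pair in $(D'\times\{c\})\cup(\{c\}\times D')$ to $c$, so that it acts like a height-1 semilattice over all of $D$; pinning down these values, which the construction in Theorem~\ref{lin} leaves unspecified, is the step I expect to be the main obstacle (it may instead be cleaner to reduce this subcase to a three-element one). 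In every case we obtain a binary operation $h\in\fO$ acting like a height-1 semilattice over some set $S$ with $D'\subseteq S\subseteq D$. By Corollary~\ref{semiroundsemilattice}, $\fO$ contains a semi-round subclone $\fO'$ acting like a height-1 semilattice over $S$: every operation of $\fO'$ is idempotent and takes one fixed value $c'\in S$ on every non-constant tuple of elements of $S$. Since $D'\subseteq S$ and $(d_1,d_2),(d_1,d_3)\in S^2$ with $f_2(d_1,d_2) = d_3\ne d_2 = f_2(d_1,d_3)$, the operation $f_2$ does not have this property, so $f_2\in\fO\setminus\fO'$. Hence $\fO'\subsetneq\fO$ is a proper semi-round subclone, and $\fO$ is not minimally semi-round.
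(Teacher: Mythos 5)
The first half of your argument is correct and matches the paper's: when $c \in D'$, Corollary~\ref{semiroundsemilattice} gives a semi-round subclone $\fO'$ acting like a height-1 semilattice over $D'$, and your observation that the linear operation $f_2$ takes two distinct values on non-constant tuples of $(D')^2$ (hence $f_2 \notin \fO'$, so $\fO' \subsetneq \fO$) is the right way to supply the properness the paper leaves implicit.

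The gap is exactly where you flagged it, and it is fatal to the route you sketch. You hope that when $c \notin D'$ (so $D = D' \cup \{c\} = \{0,1,2,3\}$ with $c = 3$) a ``small modification of $g_2$'' sends every pair in $(D'\times\{c\})\cup(\{c\}\times D')$ to $c$, producing a height-1 semilattice over all of $D$. This is not always achievable. Consider the case where the linear operation is $f_{210012}$ and the operation produced by Theorem~\ref{lin} is $g_2 = f_{330312}$, i.e.\ $g_2(0,3)=0$, $g_2(1,3)=1$, $g_2(2,3)=2$. In the clone $\gen{f_{210012},f_{330312}}$, the subalgebra $\Sg{\gen{f_{210012},f_{330312}}}{(0,3),(3,0)}$ is $\{(0,3),(3,0),(0,0)\}$, which does not contain $(3,3)$; so no binary term of that clone sends $(0,3)$ to $3$, and no term-modification of $g_2$ can be a height-1 semilattice on $D$. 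Nor does it help to pass to a smaller three-element set containing a pair from $D'$: no two-element or three-element subset gives you a height-1 semilattice that excludes $f_{210012}$. The paper handles precisely this residual case by an entirely different device: it shows that $\gen{f_{330312}}$ already contains a ternary symmetric term, hence is semi-round via Remark~\ref{f4}, and it is proper because its only binary symmetric operation is $f_{330312}$ itself. Your proof would need to incorporate an argument of this kind (or the rest of the paper's subcase analysis for the values of $d,e,f$ in $g_2 = f_{33d3ef}$), since the uniform ``extend $g_2$ to a semilattice on $D$'' strategy does not cover it.
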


\begin{proof}
    Let $c$ be the constant guaranteed by Theorem \ref{lin}. If $c \in D'$ then we are done by Corollary \ref{semiroundsemilattice}. Thus we can assume $D = \{0, 1, 2, 3\}$, $D' = \{0, 1, 2\}$, and $c=3$.
    
    The binary symmetric operation that acts linearly over $D'$ must be of the form
    \begin{center}
        \binaryfour{2}{1}{a}{0}{b}{c}
    \end{center}
    and $g_2$ must be of the form
    \begin{center}
        \binaryfour{3}{3}{d}{3}{e}{f}.
    \end{center}
    
    If at least two of $d, e, f$ are equal to 3, then $f_{33d3ef}$ acts like a height-1 semilattice over a subset $D' \subset D$ with size 3, so we can apply Corollary \ref{semiroundsemilattice} to show that $\fO$ is not minimally semi-round. The set $D'$ contains 3 and the corresponding domain elements for the two of $d$, $e$, $f$ that are equal to 3.
    
    If there is at most one 3 among $d$, $e$, and $f$, we can assume there is at most one 3 among $a$, $b$, and $c$ by Corollary \ref{semiroundsemilattice} on a two-element subset of $D$; otherwise $\fO$ is not minimally semi-round. If $a \in \{1, 2\}$ then one can check that
    \[\begin{bmatrix}a \\ a\end{bmatrix} \in \Sg{\fO}{\begin{bmatrix}1 \\ 2\end{bmatrix}, \begin{bmatrix}2 \\ 1\end{bmatrix}}.\]
    Thus, some binary operation acts like a height-1 semilattice over $\{1, 2\}$, so $\fO$ is not minimally semi-round. Similarly, if $b \in \{0, 2\}$ or $c \in \{0, 1\}$ then $\fO$ is not minimally semi-round.
    
    If exactly one of $a$, $b$ and $c$ is equal to 3, then without loss of generality assume $c=3$, so the binary operation that acts linearly over $D'$ is forced to be $f_{210013}:$
    \begin{center}
        \binaryfour{2}{1}{0}{0}{1}{3}.
    \end{center}
    Since we can assume $a \notin \{1, 2\}$ and $b \notin \{0, 2\}$, by Corollary \ref{semiroundsemilattice} either $\fO$ is not minimally semi-round, or the binary symmetric operation $f_{330313}$ guaranteed by Theorem \ref{lin} is in $\fO$:
    \begin{center}
        \binaryfour{3}{3}{0}{3}{1}{3}.
    \end{center}
    Then one can check that
    \[\begin{bmatrix}1 \\ 1\end{bmatrix} \in \Sg{\fO}{\begin{bmatrix}1 \\ 2\end{bmatrix}, \begin{bmatrix}2 \\ 1\end{bmatrix}}\]
    so $\fO$ is not minimally semi-round by Corollary \ref{semiroundsemilattice}.
    
    If $3 \notin \{a, b, c\}$, then the binary operation that acts linearly over $D'$ is forced to be $f_{210012}$:
    \begin{center}
        \binaryfour{2}{1}{0}{0}{1}{2}.
    \end{center}
    By Corollary \ref{semiroundsemilattice} either $\fO$ is not minimally semi-round, or the binary symmetric operation $f_{330312}$ guaranteed by Theorem \ref{lin} is in $\fO$:
    \begin{center}
        \binaryfour{3}{3}{0}{3}{1}{2}.
    \end{center}
    Then \[ f_{330312}(f_{330312}(x, y), f_{330312}(f_{330312}(x, z), f_{330312}(y, z)))\]
    is a ternary symmetric operation, so
    \[\gen{f_{330312}} \subsetneq \mathcal O\]
    is a semi-round clone by Remark \ref{f4}, and one can check that it does not contain a symmetric binary operation other than $f_{330312}$.
\end{proof}



\section{Classification}
In this section, we present a complete catalogue of every minimal idempotent semi-round clone over a domain of size at most 4, up to a renaming of the domain elements. Additionally, we prove that every semi-round clone over a domain of size 4 is also round. Non-idempotent clones are not considered, as Theorem \ref{idempotent} states that a minimal counterexample to the conjecture is idempotent, and non-minimal round clones are not considered, as Corollary \ref{minsemiround} guarantees the existence of a minimally semi-round subclone of every semi-round clone.

\subsection{Domain of Size 1}
There is only one clone over a domain of size 1, which is both semi-round and round.

\subsection{Domain of Size 2}
There is only one minimal semi-round clone over a domain of size 2, up to a renaming of the domain elements. To prove this, let $D = \{0, 1\}$ and let $f_2(x, y)$ be the clone's binary symmetric operation. We can, without loss of generality, assume that $f_2(x, y) = xy$:
\begin{center}
\begin{tabular}{c|cc}
$f_{2}$ & $0$ & $1$ \\ \hline
$0$     & $0$ & $0$ \\
$1$     & $0$ & $1$
\end{tabular}.
\end{center}
Then $\gen{f_2}$ is round by Proposition \ref{semilatticeoperation} because $f_2$ is a semilattice operation, so it is the unique minimal semi-round clone over $\{0, 1\}$, up to a renaming of the domain elements.

\subsection{Domain of Size 3}\label{size3}
Assume $D = \{-1, 0, 1\}$, which we abbreviate as $\{-, 0, +\}$. By a finite case check, any minimal clone must contain, up to a renaming of the domain elements, one of following symmetric binary operations:

\bigskip
\begin{center}
\begin{tabular}{c c c}
\binarythree{0}{0}{0} & \binarythree{+}{+}{0} & \binarythree{+}{0}{0} \\[1cm]
\binarythree{0}{0}{+} & \binarythree{+}{0}{-} & \binarythree{+}{-}{0} \\[1cm]
\binarythree{-}{0}{+}.
\end{tabular}
\end{center}
\bigskip
\noindent To determine the minimal round clones, we casework on the symmetric binary operation.

\begin{itemize}
    \item $\gen{f_{000}}$ is minimally round by Proposition \ref{semilatticeoperation}, since $f_{000}$ is a semilattice operation.
    \item $\gen{f_{++0}}$ is minimally round by Proposition \ref{semilatticeoperation}, since $f_{++0}$ is a semilattice operation.
    \item $\gen{f_{++0}} \subseteq \gen{f_{+00}}$, so $f_{+00}$ does not need to be considered, since \[f_{+00}(f_{+00}(x, f_{+00}(x, y)), f_{+00}(y, f_{+00}(x, y))) = f_{++0}(x, y).\]
    \item $\gen{f_{++0}} \subseteq \gen{f_{00+}}$, so $f_{00+}$ does not need to be considered, since \[f_{00+}(f_{00+}(x, f_{00+}(x, y)), f_{00+}(y, f_{00+}(x, y))) = f_{++0}(x, y).\]
    \item $\gen{f_{+0-}}$ is minimally semi-round and minimally round, since all operations of the form \[f_{k}(x_1, \ldots, x_k) = \sgn(x_1 + \ldots + x_k)\] are in $\gen{f_{-+0}}$. For $k=1$ take $f_1 = \pi_1^1$, and for $k=2$ take $f_2=f_{-+0}$. To construct $f_{k+1}$ inductively, the identity \begin{align*}
        f_{k+1}(x_1, \ldots, x_{k+1}) = f_2(f_k(
            &f_k(\hspace{14pt} x_2, x_3, \ldots, x_k, x_{k+1}), \\
            &f_k(x_1, \hspace{14pt} x_3, \ldots, x_k, x_{k+1}), \\
            &f_k(x_1, x_2, \hspace{14pt}  \ldots, x_k, x_{k+1}), \\
            &\hspace{65pt}\vdots, \\
            &f_k(x_1, x_2, x_3, \ldots, \hspace{14pt} x_{k+1})), f_k(x_1, \ldots, x_k))
    \end{align*}
    will suffice.
    \item $\gen{f_{+-0}}$ is not semi-round because the existence of a symmetric ternary operation is forbidden by the automorphism sending $0$ to $+$, $+$ to $-$, and $-$ to $0$. Hence, we will determine all minimal semi-round clones of the form $\gen{f_{+-0}, g}$, where $g$ is some symmetric idempotent ternary operation.
    
    By Corollary \ref{semiroundsemilattice} there must be a symmetric ternary operation in $\fO$ of the form
    \[f_3(x, y, z) = \begin{cases}
        f_{+-0}(d_1, d_2) & \text{if $\thereexists (d_1, d_2 \in D) \: \{x, y, z\} = \{d_1, d_2\}$} \\
        c & \text{if $\{x, y, z\} = D$}
    \end{cases}\]
    for some $c \in D$; without loss of generality assume $c=0$. Now, $\gen{f_{+-0}, f_3}$ is round because each operation of the form
    \[f_k(x_1, \ldots, x_k) \coloneqq \begin{cases}
        f_{+-0}(d_1, d_2) & \text{if $\thereexists (d_1, d_2 \in D) \: \{x_1, \ldots, x_k\} = \{d_1, d_2\}$} \\
        0 & \text{otherwise}
    \end{cases}\]
    is in $\gen{f_{+-0}, f_3}$. Note that the base cases $k \in \{1, 2, 3\}$ are true, so it suffices to construct $f_k$ inductively. 
    Define $\xx = (x_1, \ldots, x_{k+1})$, and define $\yy = c_{f_k}^N(x)$, $\yy' = c_{f_k}^N(f_2(\xx, \yy))$, and $\yy'' = c_{f_k}^N(f_2(\xx, \yy'))$ for a sufficiently large integer $N$. Then
    \begin{align*}
        f_{k+1}(\xx) = f_3(\yy, \yy', \yy'')_1,
    \end{align*}
    where the subscript denotes the first element of the tuple; this works because $\yy = c_{f_k}^N(\xx)$ is a constant tuple for sufficiently large $N$, so $\{\yy, \yy', \yy''\} = \{(-, \ldots, -), (0, \ldots, 0), (+, \ldots, +)\}$ or $\yy = \yy' = \yy''$.
    
    \item No clone containing $f_{-0+}$ is minimally semi-round by Lemma \ref{nolin}, so this case does not need to be considered.
\end{itemize}

\subsection{Domain of Size 4}
The characterization of all minimal idempotent semi-round clones over a domain of size 4 will be done through
casework on the symmetric binary operation it contains. There are $4^{\binom{4}{2}} = 4096$ such operations but they fall into 192 distinct equivalence classes under isomorphism; representative elements are listed below.

\begin{align*}
&f_{000000}, f_{000001}, f_{000002}, f_{000011}, f_{000012}, f_{000013}, f_{000021}, f_{000023}, f_{000033}, f_{000111},\\ &f_{000112}, f_{000121}, f_{000122}, f_{000123}, f_{000132}, f_{000321}, f_{001000}, f_{001001}, f_{001002}, f_{001003},\\ &f_{001010}, f_{001011}, f_{001012}, f_{001013}, f_{001020}, f_{001021}, f_{001022}, f_{001023}, f_{001030}, f_{001031},\\ &f_{001032}, f_{001033}, f_{001100}, f_{001101}, f_{001102}, f_{001103}, f_{001110}, f_{001111}, f_{001112}, f_{001113},\\ &f_{001120}, f_{001121}, f_{001122}, f_{001123}, f_{001130}, f_{001131}, f_{001132}, f_{001133}, f_{001200}, f_{001201},\\ &f_{001202}, f_{001203}, f_{001210}, f_{001211}, f_{001212}, f_{001213}, f_{001220}, f_{001221}, f_{001222}, f_{001223},\\ &f_{001230}, f_{001231}, f_{001232}, f_{001233}, f_{001300}, f_{001301}, f_{001302}, f_{001303}, f_{001310}, f_{001311},\\ &f_{001312}, f_{001313}, f_{001320}, f_{001321}, f_{001322}, f_{001323}, f_{001330}, f_{001331}, f_{001332}, f_{003000},\\ &f_{003001}, f_{003002}, f_{003011}, f_{003012}, f_{003013}, f_{003021}, f_{003100}, f_{003101}, f_{003102}, f_{003110},\\ &f_{003112}, f_{003113}, f_{003120}, f_{003121}, f_{003122}, f_{003300}, f_{003301}, f_{003302}, f_{003311}, f_{003312},\\ &f_{003321}, f_{011000}, f_{011001}, f_{011002}, f_{011010}, f_{011011}, f_{011012}, f_{011013}, f_{011020}, f_{011021},\\ &f_{011022}, f_{011023}, f_{011030}, f_{011031}, f_{011032}, f_{011120}, f_{011121}, f_{011122}, f_{011123}, f_{011130},\\ &f_{011131}, f_{011132}, f_{011220}, f_{011223}, f_{011230}, f_{011231}, f_{011320}, f_{011321}, f_{011322}, f_{012000},\\ &f_{012001}, f_{012002}, f_{012003}, f_{012010}, f_{012011}, f_{012012}, f_{012013}, f_{012020}, f_{012021}, f_{012022},\\ &f_{012030}, f_{012031}, f_{012032}, f_{012100}, f_{012101}, f_{012102}, f_{012103}, f_{012120}, f_{012121}, f_{012122},\\ &f_{012130}, f_{012131}, f_{012132}, f_{012200}, f_{012203}, f_{012210}, f_{012213}, f_{012230}, f_{012300}, f_{012301},\\ &f_{012302}, f_{012310}, f_{012311}, f_{012313}, f_{012320}, f_{012321}, f_{012330}, f_{013002}, f_{013010}, f_{013011},\\ &f_{013012}, f_{013021}, f_{013022}, f_{013102}, f_{013310}, f_{013321}, f_{022101}, f_{022301}, f_{022321}, f_{023321},\\ &f_{032000}, f_{032001}, f_{032020}, f_{032021}, f_{032030}, f_{032230}, f_{032320}, f_{032321}, f_{211000}, f_{211020},\\ &f_{211300}, f_{211301}.
\end{align*}

Many of these operations do not need to be considered because the clones they generate contain other binary operations. For example, only operations in the
image of repeated composition of the map
\[f(a, b) \mapsto f(f(a, f(a, b)), f(b, f(a, b)))\]
need to be considered, which results in the following 37 operations:
\begin{align*}
&f_{000000}, f_{000002}, f_{000012}, f_{000013}, f_{000033}, f_{000111}, f_{000112}, f_{000123}, f_{000132}, f_{000321}, \\
&f_{001030}, f_{001031}, f_{001032}, f_{001033}, f_{001130}, f_{001132}, f_{001133}, f_{001230}, f_{001231}, f_{001232}, \\
&f_{001233}, f_{003012}, f_{003013}, f_{003112}, f_{003113}, f_{003312}, f_{003321}, f_{011231}, f_{011321}, f_{013310}, \\
&f_{022101}, f_{023321}, f_{032230}, f_{032320}, f_{032321}, f_{211000}, f_{211020}.
\end{align*}
Under the map
\[f(a, b) \mapsto f(f(a, f(a, f(a, b))), f(b, f(b, f(a, b))))\]
the operations $f_{001030}$, $f_{001130}$, $f_{001230}$, $f_{011321}$, $f_{013310}$ can be removed from the list. Similarly, under the map \[f(a, b) \mapsto f(f(a, f(b, f(a, b))), f(b, f(a, f(a, b))))\]
the operations $f_{211000}$ and $f_{022101}$ can be removed, and $f_{211020}$ can be removed by considering the map
\[f(a, b) \mapsto f(a, f(a, f(b, f(b, f(a, b))))).\]
Lastly, the operations $f_{000321}$, $f_{003321}$, $f_{023321}$, $f_{032230}$, $f_{032320}$, and $f_{032321}$ all act linearly over a three-element subset of their domain, so by Lemma \ref{nolin} they don't need to be considered. Therefore, only the following 23 operations need to be considered for analysis:
\begin{align*}
&f_{000000}, f_{000002}, f_{000012}, f_{000013}, f_{000033}, f_{000111}, f_{000112}, f_{000123}, f_{000132}, f_{001031}, \\ &f_{001032}, f_{001033}, f_{001132}, f_{001133}, f_{001231}, f_{001232}, f_{001233}, f_{003012}, f_{003013}, f_{003112}, \\ &f_{003113}, f_{003312}, f_{011231}.
\end{align*}

Sixteen of these operations already generate minimal round clones. The following five operations
generate round clones by Proposition \ref{semilatticeoperation} because they are semilattices:
\begin{center}
    \begin{tabular}{cc}
    \binaryfour{0}{0}{0}{0}{0}{0} & \binaryfour{0}{0}{0}{0}{0}{2} \\[1cm]
    \binaryfour{0}{0}{0}{0}{1}{2} & \binaryfour{0}{0}{0}{1}{1}{1} \\[1cm]
    \binaryfour{0}{0}{0}{1}{1}{2}. & 
    \end{tabular}
\end{center}
The following eight operations also generate round clones:
\begin{center}
    \begin{tabular}{cc}
    \binaryfour{0}{0}{0}{0}{1}{3} & \binaryfour{0}{0}{0}{0}{3}{3} \\[1cm]
    \binaryfour{0}{0}{0}{1}{2}{3} & \binaryfour{0}{0}{1}{0}{3}{1} \\[1cm]
    \binaryfour{0}{0}{1}{1}{3}{3} & \binaryfour{0}{0}{1}{2}{3}{1} \\[1cm]
    \binaryfour{0}{0}{1}{2}{3}{3} & \binaryfour{0}{1}{1}{2}{3}{1}.
    \end{tabular}
\end{center}
For each of these operations $f$, the $(k+1)$-ary operation
\begin{align*}
    f_{k+1}(x_1, \ldots, x_{k+1}) \coloneqq f_2(f_k(
        &f_k(\hspace{14pt} x_2, x_3, \ldots, x_k, x_{k+1}), \\
        &f_k(x_1, \hspace{14pt} x_3, \ldots, x_k, x_{k+1}), \\
        &f_k(x_1, x_2, \hspace{14pt}  \ldots, x_k, x_{k+1}), \\
        &\hspace{65pt}\vdots, \\
        &f_k(x_1, x_2, x_3, \ldots, \hspace{14pt} x_{k+1})), f_k(x_1, \ldots, x_k)),
\end{align*}
where $f_2 = f$, is symmetric; this is similar to the $f_{+0-}$ case from the domain of size 3 enumeration. Lastly, the following three operations generate round clones:
\begin{center}
    \begin{tabular}{cc}
    \binaryfour{0}{0}{1}{0}{3}{2} & \binaryfour{0}{0}{1}{0}{3}{3} \\[1cm]
    \binaryfour{0}{0}{1}{2}{3}{2}. & 
    \end{tabular}
\end{center}
\begin{center}\end{center}
For each of these operations, a $(k+1)$-ary symmetric operation $f_{k+1}$ can be constructed through the following induction, where \[(y_1, \ldots, y_{k+1}) = c_{f_k}^N(x_1, \ldots, x_{k+1})\] for a sufficiently large integer $N$.
\begin{align*}
    f_{k+1}(x_1, \ldots, x_{k+1}) \coloneqq f_2(f_k(
        &f_k(\hspace{14pt} y_2, y_3, \ldots, y_k, y_{k+1}), \\
        &f_k(y_1, \hspace{14pt} y_3, \ldots, y_k, y_{k+1}), \\
        &f_k(y_1, y_2, \hspace{14pt}  \ldots, y_k, y_{k+1}), \\
        &\hspace{65pt}\vdots, \\
        &f_k(y_1, y_2, y_3, \ldots, \hspace{14pt} y_{k+1})), f_k(y_1, \ldots, y_k)).
\end{align*}
This works because $(y_1, \ldots, y_{k+1})$ always lies in $(D')^{k+1}$ for some three-element subset $D' \subsetneq D$ for sufficiently large $N$:
\begin{itemize}
    \item For $\gen{f_{001032}}$, repeatedly applying $c_{f_k}$ to the tuple $(x_1, \ldots, x_{k+1})$ will always result in an element of $\{0, 1, 3\}^{k+1}$, unless $(x_1, \ldots, x_{k+1}) \in \{2, 3\}^{k+1}$.
    \item For $\gen{f_{001033}}$, repeatedly applying $c_{f_k}$ to the tuple $(x_1, \ldots, x_{k+1})$ will always result in an element of $\{0, 1, 3\}^{k+1}$, unless $(x_1, \ldots, x_{k+1}) = (2, \ldots, 2)$.
    \item For $\gen{f_{001232}}$, repeatedly applying $c_{f_k}$ to the tuple $(x_1, \ldots, x_{k+1})$ will always result in an element of $\{0, 1, 2\}^{k+1}$, unless $(x_1, \ldots, x_{k+1}) \in \{1, 3\}^{k+1}$.
\end{itemize}

The following five operations do not generate clones with a symmetric ternary operation, but they generate round clones when symmetric ternary operations are added:
\begin{center}
    \begin{tabular}{cc}
    \binaryfour{0}{0}{0}{1}{3}{2} & \binaryfour{0}{0}{3}{0}{1}{2} \\[1cm]
    \binaryfour{0}{0}{3}{0}{1}{3} & \binaryfour{0}{0}{3}{1}{1}{2} \\[1cm]
    \binaryfour{0}{0}{3}{1}{1}{3}.
    \end{tabular}
\end{center}
We casework on each one. For the remainder of this paragraph, let $f_2$ be the binary symmetric operation and let $f_3$ be the ternary symmetric operation. The induction used to prove that each case yields a round clone is similar to the $f_{+-0}$ case from the domain of size 3 enumeration. To construct the $k$-ary operation $f_{k+1}$, define $\xx \coloneqq (x_1, \ldots, x_{k+1})$, and define $\yy \coloneqq c_{f_k}^N(x)$, $\yy' \coloneqq c_{f_k}^N(f_2(\xx, \yy))$, and $\yy'' \coloneqq c_{f_k}^N(f_2(\xx, \yy'))$ for a sufficiently large integer $N$. Then
\begin{align*}
    f_{k+1}(\xx) \coloneqq f_3(\yy, \yy', \yy'')_1,
\end{align*}
where the subscript denotes the first element of the tuple, is symmetric; this works for each case because $\yy = c_{k+1}^N(\xx)$ is a constant tuple for sufficiently large $N$, so either $\yy = \yy' = \yy''$ or $\{\yy, \yy', \yy''\} = \{(d_1, \ldots, d_1), (d_2, \ldots, d_2), (d_3, \ldots, d_3)\}$, where $\{d_1, d_2, d_3\}$ is chosen such that $f_2$, when restricted to the domain $\{d_1, d_2, d_3\} \subset D$, can be renamed to $f_{+-0}$.

The function $\theta$ that takes a binary operation $f_2$ and a ternary operation $f_3$ as input and outputs a ternary operation is defined as
\[\theta(f_2, f_3)(x, y, z) \coloneqq c_{f_2}^N\left(f_2\left(c_{f_2}^N(x, y, z), f_3\left(c_{f_2}^N(x, y, z)\right)\right)\right)_1\]
for a sufficiently large positive integer $N$. In each of the following cases, $\theta$ returns a symmetric ternary operation that modifies only one or two outputs of $f_3$. The casework on the binary symmetric operation is below.

\begin{itemize}
    \item Suppose a clone is generated by $f_{000132}$ and a symmetric ternary operation. Using Corollary \ref{semiroundsemilattice}, one can force the existence of a symmetric ternary operation of the form
    \[g_c(x, y, z) = \begin{cases}
        0 & 0 \in \{x, y, z\} \\
        1 & \{x, y, z\} \in \{\{1\}, \{1, 2\}\} \\
        2 & \{x, y, z\} \in \{\{2\}, \{2, 3\}\} \\
        3 & \{x, y, z\} \in \{\{3\}, \{1, 3\}\} \\
        c & \{x, y, z\} = \{1, 2, 3\}
    \end{cases}\]
    for some $c \in D$. Since $\theta(f_{000132}, g_c)$ maps
    \[g_1 \mapsto g_3 \mapsto g_2 \mapsto g_1,\]
    this case gives two distinct minimal round clones.
    \item Suppose a clone is generated by $f_{003012}$ and a symmetric ternary operation. Using Corollary \ref{semiroundsemilattice}, one can force the existence of a symmetric ternary operation of the form
    \[g_{c,d}(x, y, z) = \begin{cases}
        0 & \{x, y, z\} \in \{\{0\}, \{0, 1\}, \{0, 2\}, \{1, 2\}, \{0, 1, 2\},\\
        & \hspace{53pt} \{1, 2, 3\}\} \\
        1 & \{x, y, z\} \in \{\{1\}, \{1, 3\}\} \\
        2 & \{x, y, z\} \in \{\{2\}, \{2, 3\}\} \\
        3 & \{x, y, z\} \in \{\{3\}, \{0, 3\}\} \\
        c & \{x, y, z\} = \{0, 1, 3\} \\
        d & \{x, y, z\} = \{0, 2, 3\}
    \end{cases}\]
    for some $c, d \in D$. Since $\theta(f_{003012}, g_c)$ maps
    \begin{align*}
        &g_{0, 0} \mapsto g_{3, 3} \mapsto g_{1, 2} \mapsto g_{0, 0} \\
        &g_{0, 2} \mapsto g_{3, 0} \mapsto g_{1, 3} \mapsto g_{0, 2} \\
        &g_{0, 3} \mapsto g_{3, 2} \mapsto g_{1, 0} \mapsto g_{0, 3}
    \end{align*}
    and maps each of $g_{0, 1}$, $g_{1, 1}$, $g_{2, 0}$, $g_{2, 1}$, $g_{2, 2}$, $g_{2, 3}$, $g_{3, 1}$ to one of the above three cycles, this case gives three distinct minimal round clones.
    \item Suppose a clone is generated by $f_{003013}$ and a symmetric ternary operation. Using Corollary \ref{semiroundsemilattice}, one can force the existence of a symmetric ternary operation of the form
    \[g_{c}(x, y, z) = \begin{cases}
        0 & \{x, y, z\} \in \{\{0\}, \{0,1\}, \{0,2\}, \{1,2\},\{0,1,2\}\}\\
        1 & \{x, y, z\} \in \{\{1\},\{1,3\}\} \\
        2 & \{x, y, z\} = \{2\} \\
        3 & \{x, y, z\} \in \{\{3\},\{0,3\},\{2,3\},\{0,2,3\}\} \\
        c & \{x, y, z\} \in \{\{0, 1, 3\}, \{1,2,3\}\}
    \end{cases}\]
    for some $c \in D$. Since $\theta(f_{003013}, g_c)$ maps
    \[g_0 \mapsto g_3 \mapsto g_1 \mapsto g_0\]
    and maps $g_2$ to the above cycle, this case only gives one minimal round clone.
    \item Suppose a clone is generated by $f_{003112}$ and a symmetric ternary operation. Using Corollary \ref{semiroundsemilattice}, one can force the existence of a symmetric ternary operation of the form
    \[g_{c,d}(x, y, z) = \begin{cases}
        0 & \{x, y, z\} \in \{\{0\}, \{0, 1\}, \{0, 2\}, \{0, 1, 2\}\}\\
        1 & \{x, y, z\} \in \{\{1\}, \{1, 2\}, \{1, 3\}, \{1, 2, 3\}\} \\
        2 & \{x, y, z\} \in \{\{2\}, \{2, 3\}\} \\
        3 & \{x, y, z\} \in \{\{3\}, \{0, 3\}\} \\
        c & \{x, y, z\} = \{0, 1, 3\} \\
        d & \{x, y, z\} = \{0, 2, 3\}
    \end{cases}\]
    for some $c, d \in D$. Since $\theta(f_{003012}, g_c)$ maps
    \begin{align*}
        &g_{0, 0} \mapsto g_{3, 3} \mapsto g_{1, 2} \mapsto g_{0, 0} \\
        &g_{0, 2} \mapsto g_{3, 0} \mapsto g_{1, 3} \mapsto g_{0, 2} \\
        &g_{0, 3} \mapsto g_{3, 2} \mapsto g_{1, 0} \mapsto g_{0, 3}
    \end{align*}
    and maps each of $g_{0, 1}$, $g_{1, 1}$, $g_{2, 0}$, $g_{2, 1}$, $g_{2, 2}$, $g_{2, 3}$, $g_{3, 1}$ to one of the above three cycles, this case gives three distinct minimal round clones.
    \item Suppose a clone is generated by $f_{003113}$ and a symmetric ternary operation. Using Corollary \ref{semiroundsemilattice}, one can force the existence of a symmetric ternary operation of the form
    \[g_{c}(x, y, z) = \begin{cases}
        0 & \{x, y, z\} \in \{\{0\}, \{0, 1\}, \{0, 2\}, \{0, 1, 2\}\}\\
        1 & \{x, y, z\} \in \{\{1\}, \{1, 2\}, \{1, 3\}, \{1, 2, 3\}\} \\
        2 & \{x, y, z\} = \{2\} \\
        3 & \{x, y, z\} \in \{\{3\}, \{0, 3\}, \{2, 3\}, \{0, 2, 3\}\} \\
        c & \{x, y, z\} = \{0, 1, 3\}
    \end{cases}\]
    for some $c \in D$. Since $\theta(f_{003013}, g_c)$ maps
    \[g_0 \mapsto g_3 \mapsto g_1 \mapsto g_0\]
    and $g_2$ to the above cycle, this case gives two distinct minimal round clones.
\end{itemize}
One can prove that each of the clones enumerated above are distinct by computing relations; for each pair of distinct clones $\fO_1$ and $\fO_2$ in the above list, one can find a relation that is preserved by $\fO_1$ but not by $\fO_2$.

\bigskip

The following two operations also don't generate clones with a symmetric ternary operation, but they generate round clones when symmetric ternary operations are added:
\begin{center}
    \begin{tabular}{cc}
    \binaryfour{0}{0}{1}{1}{3}{2} & \binaryfour{0}{0}{3}{3}{1}{2}
    \end{tabular}.
\end{center}
For the remainder of this paragraph, let $f_2$ be the binary symmetric operation and let $f_3$ be the ternary symmetric operation. To construct the $k$-ary operation $f_{k+1}$ for each case, define:
\begin{align*}
    \xx &\coloneqq (x_1, \ldots, x_{k+1}) \\
    \yy &\coloneqq c_{f_k}^N\left(f_2\left(c_{f_k}^N(\xx), c_{f_k}^{N+1}(\xx)\right)\right) \\
    \yy' &\coloneqq c_{f_k}^N\left(f_2\left(c_{f_k}^N(f_2(\xx, \yy)), c_{f_k}^{N+1}(f_2(\xx, \yy))\right)\right) \\
    \yy'' &\coloneqq c_{f_k}^N\left(f_2\left(c_{f_k}^N(f_2(\xx, \yy')), c_{f_k}^{N+1}(f_2(\xx, \yy'))\right)\right)
\end{align*}
for a sufficiently large integer $N$. Then
\begin{align*}
    f_{k+1}(\xx) \coloneqq f_3(\yy, \yy', \yy'')_1,
\end{align*}
where the subscript denotes the first element of the tuple, is symmetric; this works for each case because $\yy = c_{f_k}^N\left(f_2\left(c_{f_k}^N(\xx), c_{f_k}^{N+1}(\xx)\right)\right)$ is a constant tuple for sufficiently large $N$, so either $\yy = \yy' = \yy''$ or $\{\yy, \yy', \yy''\} = \{(d_1, \ldots, d_1), (d_2, \ldots, d_2), (d_3, \ldots, d_3)\}$, where $f_2$ restricted to the domain $\{d_1, d_2, d_3\} \subset D$ can be renamed to $f_{+-0}$.

The function $\Theta$ that takes a binary operation $f_2$ and a ternary operation $f_3$ as input and outputs a ternary operation is defined as
\begin{align*}
    \Theta(f_2, f_3)(x, y, z) \coloneqq c_{f_2}^N\biggl(&f_2\biggl(f_2(c_{f_2}^N(x, y, z), c_{f_2}^{N+1}(x, y, z)), \\
    &f_3\left(f_2(c_{f_2}^N(x, y, z), c_{f_2}^{N+1}(x, y, z))\right)\biggr)\biggr)_1
\end{align*}
for a sufficiently large positive integer $N$. In each of the following cases, $\Theta$ returns a symmetric ternary operation that modifies only one or two outputs of $f_3$.
\begin{itemize}
    \item Suppose a clone is generated by $f_{001132}$ and a symmetric ternary operation. Using Corollary \ref{semiroundsemilattice}, one can force the existence of a symmetric ternary operation of the form
    \[g_{c}(x, y, z) = \begin{cases}
        0 & \{x, y, z\} \in \{\{0, 2\}, \{0, 1, 2\}, \{0, 2, 3\}\}\\
        1 & \{x, y, z\} = \{1, 2\} \\
        2 & \{x, y, z\} \in \{\{2\}, \{2, 3\}\} \\
        \sigma^{-1}(\sgn(\sigma(x)+\sigma(y)+\sigma(z))) & \{x, y, z\} \subseteq \{0, 1, 3\} \\
        c & \{x, y, z\} = \{1, 2, 3\} 
    \end{cases}\]
    for some $c \in D$, where $\sigma(0) = -1$, $\sigma(1) = 0$, and $\sigma(3) = 1$. This case only gives one minimal round clone, since $\Theta(f_{001132}, g_c)$ maps
    \[g_1 \mapsto g_3 \mapsto g_2 \mapsto g_1.\]
    To prove that $\gen{f_{001132}, g_0} \subseteq \gen{f_{001132}, g_1}$, let $\xx \coloneqq (x, y, z)$ and $\yy \coloneqq f_2\left(c_{f_{001132}}^N(\xx), c_{f_{001132}}^{N+1}(\xx)\right)$, which will be constant unless $\{x, y, z\} = \{1, 2, 3\}$. Additionally, let $\yy' \coloneqq c_{f_{001132}}(\yy)$ and $\yy'' \coloneqq c_{f_{001132}}(\yy')$ be the cyclic rotations of $\yy$. Then
    \[c_{f_{003312}}^N(f_2(f_2(f_2((g_0(\yy), g_0(\yy), g_0(\yy)), \yy), \yy'), \yy''))_1 = g_1(\xx)\]
    where the subscript denotes the first element of the tuple, as desired.
    \item Suppose a clone is generated by $f_{003312}$ and a symmetric ternary operation. Using Corollary \ref{semiroundsemilattice}, one can force the existence of a symmetric ternary operation of the form
    \[g_{c, d}(x, y, z) = \begin{cases}
        0 & \{x, y, z\} \in \{\{0\}, \{0, 1\}, \{0, 2\}\}\\
        3 & \{x, y, z\} \in \{\{0, 3\}, \{0, 1, 2\}\} \\
        \sigma^{-1}(\sgn(\sigma(x)+\sigma(y)+\sigma(z))) & \{x, y, z\} \subseteq \{1, 2, 3\} \\
        c & \{x, y, z\} = \{0, 1, 3\} \\
        d & \{x, y, z\} = \{0, 2, 3\}
    \end{cases}\]
    for some $c, d \in D$, where $\sigma(1) = -1$, $\sigma(2) = 1$, and $\sigma(3) = 0$. This case gives three minimal round clones, since $\Theta(f_{003312}, g_{c,d})$ maps
    \begin{align*}
        &g_{0, 0} \mapsto g_{3, 3} \mapsto g_{1, 2} \mapsto g_{0, 0} \\
        &g_{0, 2} \mapsto g_{3, 0} \mapsto g_{1, 3} \mapsto g_{0, 2} \\
        &g_{0, 3} \mapsto g_{3, 2} \mapsto g_{1, 0} \mapsto g_{0, 3}
    \end{align*}
    and also eventually maps all others to the above cycles, since $g_{1, 1} \mapsto g_{0, 1} \mapsto g_{3, 0}$, $g_{2, 2} \mapsto g_{2, 0} \mapsto g_{0, 3}$, $g_{2, 1} \mapsto g_{1, 2}$, $g_{2, 3} \mapsto g_{3, 2}$, and $g_{3, 1} \mapsto g_{1, 3}$.
\end{itemize}

\noindent Since we have exhausted all cases, we have established the proof of Theorem \ref{result}.

\subsection{Domain of Size 5}
With computer assistance, it has been shown that every idempotent clone over a domain of size 5 that contains symmetric operations of arities $1$, $2$, $3$, $4$, and $5$ contains symmetric operations of arities up to 20. The code used to verify this is available on Github at \href{https://github.com/The-Turtle/PRIMES}{\texttt{https://github.com/The-Turtle/PRIMES}}.

\section{Future Work}
If we want to make progress on larger domains, we need a way to determine whether or not a clone has a symmetric operation of a given arity without explicitly generating one.

\begin{definition}\label{symdef} Let $\AA$ be an algebra with underlying set $A$. For any tuple $\aa = (a_1, \ldots, a_k) \in A^k$, define the \textit{symmetric relation} on $\aa$ to be the set
\[\Sym(\aa) \coloneqq
\operatorname{Sg}_{\AA^{k!}}\left(
\begin{bmatrix}
a_{\sigma_1(1)} \\ \vdots \\
a_{\sigma_{k!}(1)}
\end{bmatrix},
\ldots, 
\begin{bmatrix}
a_{\sigma_1(k)} \\ \vdots \\
a_{\sigma_{k!}(k)}
\end{bmatrix}
\right),\]
where $\sigma_1, \sigma_2, \ldots, \sigma_{k!}$ are the $k!$ permutations of the tuple $(1, \ldots, k)$.
\end{definition}

\begin{proposition} Let $\AA$ be an algebra with underlying set $A$, and suppose that for every $j \le k$ and every tuple $\aa \in A^j$, the symmetric relation $\Sym(\aa)$ contains a constant tuple. Then $\AA$ has a symmetric operation of every arity less than or equal to $k$.
\end{proposition}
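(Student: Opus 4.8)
Our plan is to restate the existence of a symmetric operation of each arity $n\le k$ as a ``local-to-global'' problem about subalgebras of $\AA^n$, and to solve it by induction on $n$. For an $n$-ary term operation $t$ of $\AA$, write
\[
\rho_t \;:=\; \bigl\{\, \xx=(x_1,\dots,x_n)\in A^n \;\bigm|\; t(x_{\sigma(1)},\dots,x_{\sigma(n)}) \text{ is the same for every permutation } \sigma \text{ of }\{1,\dots,n\}\,\bigr\}.
\]
Unwinding Definition~\ref{symdef}, applying $t$ coordinatewise to the $n$ generators of $\Sym(\aa)$ yields the constant tuple with entry $t(\aa)$ precisely when $\aa\in\rho_t$; so ``$\Sym(\aa)$ contains a constant tuple'' means ``$\aa\in\rho_t$ for some $n$-ary term $t$'', and the hypothesis for $j=n$ says exactly that the finitely many sets $\rho_t$ cover $A^n$. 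On the other hand $t$ is symmetric if and only if $\rho_t=A^n$. Moreover each $\rho_t$ is a subalgebra of $\AA^n$ — the intersection over $\sigma$ of the equalizers of the two homomorphisms $\xx\mapsto t(\xx)$ and $\xx\mapsto t(x_{\sigma(1)},\dots,x_{\sigma(n)})$ from $\AA^n$ to $\AA$ — it contains the diagonal, and it is closed under permuting coordinates. So it suffices to show: \emph{if the subalgebras $\rho_t$ cover $A^n$, then $\rho_t=A^n$ for a single $t$.} We induct on $n$; when treating arity $n$ we may assume $\AA$ already has symmetric operations of all arities $<n$ (and, as usual, that all operations are idempotent).

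The mechanism is a combining step: \emph{given two $n$-ary term operations $t_1,t_2$, there is an $n$-ary term $t_3$ with $\rho_{t_3}\supseteq\rho_{t_1}\cup\rho_{t_2}$.} Granting this, list the finitely many $n$-ary term operations as $t^{(1)},\dots,t^{(M)}$, put $s^{(1)}:=t^{(1)}$, and let $s^{(i+1)}$ combine $s^{(i)}$ with $t^{(i+1)}$; then $\rho_{s^{(M)}}\supseteq\bigcup_i\rho_{t^{(i)}}=A^n$, so $s^{(M)}$ is the desired symmetric operation. To prove the combining step for a single transposition $\pi=(a\ b)$, set $\rho^\pi_t:=\{\xx : t(\xx)=t(x_{\pi(1)},\dots,x_{\pi(n)})\}$ and define
\[
t_3(\xx)\;:=\;t_2\bigl(z_1(\xx),\dots,z_n(\xx)\bigr),\qquad
z_m(\xx):=\begin{cases} t_1(\xx) & m=a,\\ t_1(x_{\pi(1)},\dots,x_{\pi(n)}) & m=b,\\ x_m & m\neq a,b.\end{cases}
\]
If $\xx\in\rho^\pi_{t_1}$ then $z_a(\xx)=z_b(\xx)$ and $\bm z(\xx)$ is fixed by the $\pi$-swap, so $t_3(\xx)=t_3(x_{\pi(1)},\dots)$; if $\xx\in\rho^\pi_{t_2}$ then, using idempotence of $t_1$ on the inert coordinates, $\bm z(\xx)$ is $t_1$ applied coordinatewise to $n$ vectors — namely $\xx$, its $\pi$-swap, and for each $m\neq a,b$ the vector got from $\xx$ by replacing both its $a$-th and $b$-th coordinates by $x_m$ — all lying in the $\pi$-closed subalgebra $\rho^\pi_{t_2}$, so $\bm z(\xx)\in\rho^\pi_{t_2}$ and again $t_3(\xx)=t_3(x_{\pi(1)},\dots)$. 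Hence $\rho^\pi_{t_3}\supseteq\rho^\pi_{t_1}\cup\rho^\pi_{t_2}$.

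To upgrade from transpositions to $S_n$, recall that $S_n$ is generated by the adjacent transpositions $(1\ 2),\dots,(n{-}1\ n)$, so one builds a term invariant under permuting its first two coordinates, then its first three, and so on. The transposition move above \emph{adds} invariance under $(i\ i{+}1)$, but must do so without spoiling the invariance under $S_i$ (on the first $i$ coordinates) already obtained; this is where the symmetric operations of arities $\le i+1<n$ supplied by the outer induction enter, allowing the auxiliary term $t_1$ above to be replaced by one that is itself symmetric in the already-symmetrized block so that the construction treats that block correctly.

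The crux — and the step I expect to be the real obstacle — is the last one, passing from $S_{n-1}$-invariance to full $S_n$-invariance: for $n\ge 3$ one has $n!>n$, so one cannot symmetrize a single $n$-ary application over all $n!$ rearrangements using one symmetric $n$-ary operation (which is exactly what we are constructing). The tools that must be played off each other here are the hypothesis for $j=n$ (in the form $\bigcup_t\rho_t=A^n$) and the symmetric $(n-1)$-ary operation $f_{n-1}$, together with its coordinatewise companion $c_{f_{n-1}}$, which commutes with coordinate permutations and hence carries $S_n$-orbits to $S_n$-orbits: one uses iterates of $c_{f_{n-1}}$ to drive an arbitrary $n$-tuple into the ``recurrent'' part of $A^n$ (a union of finitely many $c_{f_{n-1}}$-cycles), and then the local symmetrizers provided by the hypothesis, organized by $f_{n-1}$ much as in the $f_{+-0}$ construction of Section~\ref{size3}, to manufacture a single $n$-ary term symmetric on every recurrent tuple. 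Making this precise — in particular, controlling the cycle structure of $c_{f_{n-1}}$ on $A^n$ — is the heart of the proof.
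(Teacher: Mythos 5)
There is a genuine gap, and it is structural rather than cosmetic. Your combining step rests on the assertion that $\rho_t$ (and hence $\rho^\pi_{t_2}$) is a subalgebra of $\AA^n$, justified by calling $\xx\mapsto t(\xx)$ a ``homomorphism from $\AA^n$ to $\AA$.'' But a term operation $t:A^n\to A$ is a homomorphism $\AA^n\to\AA$ only if $t$ \emph{commutes} with every basic operation of $\AA$, which is a very restrictive condition and is certainly not assumed here. Concretely: with $A=\{0,1,2\}$, a single binary operation $f$ defined by $f(x,x)=x$, $f(0,1)=0$, $f(1,0)=1$, $f(0,2)=f(2,0)=0$, $f(1,2)=f(2,1)=1$, and $t=f$, one has $(2,0),(1,2)\in\rho_t$ but $f\bigl((2,0),(1,2)\bigr)=(1,0)\notin\rho_t$, so $\rho_t$ is not a subalgebra. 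Once this fails, the crucial line of your transposition argument — ``$\bm z(\xx)$ is $t_1$ applied coordinatewise to vectors in $\rho^\pi_{t_2}$, hence $\bm z(\xx)\in\rho^\pi_{t_2}$'' — collapses, and with it the claim $\rho^\pi_{t_3}\supseteq\rho^\pi_{t_1}\cup\rho^\pi_{t_2}$. Independently of this, you yourself flag that the passage from $S_{n-1}$-invariance to full $S_n$-invariance is left as ``the heart of the proof,'' so the argument is not complete on its own terms.

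For comparison, the paper's proof avoids both problems by never asking $T=\rho_f$ to be a subalgebra. It fixes a term $f$ with $T$ maximal, takes a single tuple $t\notin T$, and builds terms $g_1=f,\dots,g_{k-1}$ where $g_j$ acts symmetrically on $T$ and is invariant under permutations of its first $j$ coordinates; this step uses only the symmetric operations $f_j$ of arities $j<k$ supplied by the outer induction (applied to the $j$ cyclic shifts of the first $j$ slots of $g_{j-1}$), and the fact that the cyclic group $\langle(1\cdots j)\rangle$ together with $S_{j-1}$ generates $S_j$. The last step — passing from $S_{k-1}$ to $S_k$ on the single new tuple $t$ — is exactly where the hypothesis enters: one forms $\aa=\bigl(g_{k-1}(t_1,\dots,t_k),\dots,g_{k-1}(t_k,t_1,\dots,t_{k-1})\bigr)$, invokes that $\Sym(\aa)$ contains a constant tuple to obtain a $k$-ary term $h$ symmetric on $\aa$, and sets $g=h\circ(\text{cyclic shifts of }g_{k-1})$. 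This enlarges $T$ by the one tuple $t$, contradicting maximality. Your instinct that $\Sym$, cyclic shifts, and the lower-arity symmetric operations are the right tools is on target, but the paper wields them one offending tuple at a time (so nothing about $T$ beyond $f$'s symmetry on it is ever needed), whereas your union-of-$\rho_t$'s scheme requires $\rho_t$ to be a subalgebra, which it is not.
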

\begin{proof} We prove this by induction on $k$; for the base case $k=1$, take $f_1 = \pi_1^1$. By the inductive hypothesis, there are symmetric operations $f_1, f_2, \ldots, f_{k-1}$ of every arity strictly less than $k$. Now suppose that $f$ is a $k$-ary operation such that the set $T \subseteq A^k$ of tuples for which $f$ acts symmetrically on is maximal. We claim that $T$ must equal $A^k$; to prove this, it suffices to show that if $t \in A^k \setminus T$ is a tuple which $f$ does not act symmetrically on, then there is a $k$-ary operation $g$ which acts symmetrically on $T \cup \{t\}$.

We will first construct, for each $j < k$, an operation $g_j$ which acts symmetrically on $T$ and which is unchanged by every permutation of its first $j$ variables. We start by taking $g_1 = f$, and then we inductively define $g_j$ as
\begin{align*}
    g_j(x_1, \ldots, x_k) \coloneqq
    f_j(&g_{j-1}(x_1, x_2, \ldots, x_{j-1}, x_j, \hspace{9pt} x_{j+1}, \ldots, x_k), \\
    & g_{j-1}(x_2, x_3, \ldots, x_j, \hspace{10pt} x_1, \hspace{9pt} x_{j+1}, \ldots, x_k),\\
    &\hspace{60pt}\vdots, \\
    &g_{j-1}(x_j, x_1, \ldots, x_{j-2}, x_{j-1}, x_{j+1}, \ldots, x_k)).
\end{align*}
Finally, let $\aa$ be the tuple
\[
\aa \coloneqq (g_{k-1}(t_1, \ldots, t_k), g_{k-1}(t_2, \ldots, t_k, t_1), \ldots, g_{k-1}(t_k, t_1, \ldots, t_{k-1})).
\]
By assumption, $\Sym(\aa)$ contains a constant tuple, so there must be some $k$-ary operation $h \in \operatorname{Clo}(\AA)$ which acts symmetrically on $\aa$. Then we define $g$ by
\[
g(x_1, \ldots, x_k) \coloneqq h(g_{k-1}(x_1, \ldots, x_k), g_{k-1}(x_2, \ldots, x_k, x_1), \ldots, g_{k-1}(x_k, x_1, \ldots, x_{k-1})).\qedhere
\]
\end{proof}

The relation $\Sym(\aa)$ has a useful special property.

\begin{proposition}\label{baby-reversible} Let $\AA$ be an algebra with underlying set $A$. For any tuple $\aa \in A^k$ and any pair of permutations $(\sigma, \tau)$ on $\{1, \ldots, k\}$, let $\PP \le \AA^2$ be the binary relation $\pi_{i_\sigma,i_\tau}(\Sym(\aa))$, where $i_\sigma$ and $i_\tau$ are the indices of $\sigma$ and $\tau$ as defined in Definition \ref{symdef}. Then for any subset $B \subseteq A$, we have
\[
B + \PP = B \implies B - \PP = B.\]
\end{proposition}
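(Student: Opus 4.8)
The plan is to exploit the extreme symmetry built into $\Sym(\aa)$: it is a subalgebra of $\AA^{k!}$ whose coordinates are indexed by the permutations $\sigma_1, \ldots, \sigma_{k!}$, and the coordinatewise action of $\Sym_{k!}$ that permutes the factors corresponding to left-multiplication of the indexing permutations is an automorphism of $\Sym(\aa)$. More concretely, for any permutation $\rho \in S_k$ there is an automorphism $\alpha_\rho$ of $\Sym(\aa)$ sending the generator in position $n$ (which has $i_{\sigma}$-th coordinate $a_{\sigma(n)}$) to the corresponding generator with $\sigma$ replaced by $\rho\sigma$; this is well-defined on generators and hence extends to all of $\Sym(\aa)$. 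Under $\alpha_\rho$, the coordinate indexed by $\sigma$ is carried to the coordinate indexed by $\rho^{-1}\sigma$ (or $\rho\sigma$, depending on the convention), so $\alpha_\rho$ permutes the $k!$ coordinates by a regular action of $S_k$ on itself. The key consequence is that $\PP = \pi_{i_\sigma, i_\tau}(\Sym(\aa))$ is, up to applying such an automorphism to both coordinates simultaneously, equal to $\pi_{i_{\rho\sigma}, i_{\rho\tau}}(\Sym(\aa))$; choosing $\rho$ appropriately we may assume $\sigma$ is the identity, and then $\tau = \sigma^{-1}\tau$ ranges over an arbitrary permutation. The upshot is that the \emph{reverse} relation $\PP^-$ is also of the form $\pi_{i_{\sigma'}, i_{\tau'}}(\Sym(\aa))$ for a suitable pair $(\sigma', \tau')$, simply by swapping the roles of the two coordinates.

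The actual statement to prove is that if $B + \PP = B$ then $B - \PP = B$, i.e.\ $B + \PP^- = B$. First I would observe that $B + \PP = B$ is equivalent to $B + \PP \subseteq B$ together with $B \subseteq B + \PP$; since $\PP$ is subdirect (each projection of $\Sym(\aa)$ is all of $A$ as the generators run over all of $A$ — actually $\pi_i(\Sym(\aa))$ is the subalgebra generated by $\{a_1, \ldots, a_k\}$, so I should first replace $A$ by this subalgebra, or note $\aa$ can be taken with full support), the equation $B + \PP = B$ forces $|B + \PP| = |B|$, and applying the pigeonhole principle to the finite set $A$ gives that the map $b \mapsto b + \PP$ on subsets stabilizes $B$. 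Then I would use that $\PP \circ \PP^-$ is a reflexive, symmetric, compatible relation containing the diagonal, and iterate: $B + (\PP \circ \PP^-)^{\circ n} = B$ for all $n$, so $B$ is a union of classes of the linking congruence of $\PP$ on the relevant coordinate. A cleaner route: since $A$ is finite, $B + \PP = B$ implies $B + \PP^{\circ m} = B$ for all $m$, and for large $m$ we have $\PP^{\circ m}$ equal to an idempotent relation $\PP^\infty$ with $\PP^\infty \circ \PP^- \supseteq$ (something controlling the reverse); then chase that $B - \PP = B + \PP^{-}$ equals $B + (\PP^\infty)^- \cap (\text{stuff}) = B$.

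I expect the main obstacle to be making the automorphism argument precise enough that "$\PP^-$ has the same form as $\PP$'' is actually true — one has to check that the family $\{\pi_{i_\sigma,i_\tau}(\Sym(\aa))\}_{\sigma,\tau}$ is closed under reversal, which boils down to the regularity of the $S_k$-action on coordinate indices, together with the fact that reversing the pair $(\sigma,\tau)$ to $(\tau,\sigma)$ is exactly realized by the coordinate permutation coming from $\alpha_{\tau\sigma^{-1}}$ composed with the transposition-of-the-two-chosen-coordinates symmetry. Once that symmetry is in hand, the implication becomes almost formal: $B + \PP = B$ is a statement invariant (after relabeling) under swapping the two coordinates of $\PP$, hence under replacing $\PP$ by $\PP^-$, which is precisely $B - \PP = B$. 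The finite-cardinality pigeonhole step — that a containment-preserving, cardinality-nonincreasing self-map of the finite powerset which fixes the cardinality of $B$ must fix $B$ — is routine and I would state it as a one-line lemma rather than belabor it. So the skeleton is: (1) reduce to full support and record that $\Sym(\aa)$ is subdirect; (2) establish the coordinate-permuting automorphisms $\alpha_\rho$; (3) deduce that $\{\PP : \PP = \pi_{i_\sigma,i_\tau}(\Sym(\aa))\}$ is closed under $\PP \mapsto \PP^-$ and that $B+\PP=B$ transports correctly along the $\alpha_\rho$; (4) conclude.
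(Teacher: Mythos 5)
Your proposal has a genuine gap, and it is at exactly the point you flag as the crux. You argue that the family $\{\pi_{i_\sigma,i_\tau}(\Sym(\aa))\}_{\sigma,\tau}$ is closed under reversal, and then conclude that ``$B + \PP = B$ is a statement invariant under swapping the two coordinates of $\PP$, hence under replacing $\PP$ by $\PP^-$.'' This inference is not valid. The coordinate-permuting automorphisms $\alpha_\rho$ of $\Sym(\aa)$ do show that $\pi_{i_\sigma, i_\tau}(\Sym(\aa))$ depends only on $\tau\sigma^{-1}$, say $\PP = \PP_\rho$ with $\rho = \tau\sigma^{-1}$; and then $\PP^- = \PP_{\rho^{-1}}$. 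But $\PP_\rho$ and $\PP_{\rho^{-1}}$ are \emph{different} binary relations on $\AA$ in general (they coincide only when $\rho$ is an involution), so there is no relabeling of $A$ that swaps them, and closure of $B$ under $+\PP_\rho$ does not formally transfer to closure under $+\PP_{\rho^{-1}}$. Applying the proposition to $\PP^-$ in place of $\PP$, which is what ``closed under reversal'' does let you do, only yields the \emph{converse} implication $B - \PP = B \implies B + \PP = B$, not the one you want. Your middle paragraph is similarly circular: $B + (\PP \circ \PP^-) = (B+\PP) + \PP^- = B - \PP$, so asserting $B + (\PP \circ \PP^-) = B$ is asserting the conclusion.

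The ingredient you are missing is a \emph{quantitative} relationship between $\PP$ and $\PP^-$, not a symmetry. The paper observes that $\PP^- \subseteq \PP^{\circ (k!-1)}$: the projected generators of $\Sym(\aa)$ give the pairs $(a_n, a_{\rho(n)})$ in $\PP$, so $\PP^{\circ j}$ contains $(a_n, a_{\rho^j(n)})$ for every $j$, and since $\operatorname{ord}(\rho) \mid k!$ one gets $(a_n, a_{\rho^{-1}(n)}) \in \PP^{\circ (k!-1)}$ — exactly the generators of $\PP^-$. From $B+\PP = B$ one then gets $B - \PP \subseteq B + \PP^{\circ(k!-1)} = B$, and the reverse containment follows by the same argument with $\PP$ and $\PP^-$ exchanged. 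Your ``cleaner route'' involving $\PP^{\circ m}$ stabilizing to some $\PP^\infty$ gestures at this but never pins down a containment between a power of $\PP$ and $\PP^-$, and it also needlessly invokes finiteness of $A$: the paper's bound $k!-1$ comes from the arity of $\aa$, not from $|A|$, so no finiteness hypothesis on $A$ is needed.
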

\begin{proof} Define
\[\PP^{\circ n} \coloneqq \underbrace{\PP \circ \cdots \circ \PP}_{\text{$n$ $\PP$'s}}.\]
Then we have $\PP^- \subseteq \PP^{\circ (k! - 1)}$, since $\PP^{\circ (k! - 1)}$ contains the generators of $\PP^-$, so $B - \PP \subseteq B + \PP^{\circ (k! - 1)} = B$.
Similarly, $B = B+\PP \subseteq B-\PP^{\circ k!-1} \subseteq B$. Hence $B - \PP$ must in fact equal $B$.
\end{proof}

\begin{definition} Let $\AA$ be an algebra with underlying set $A$. Say that a relation $\RR \le \AA^m$ is \emph{reversible} if it satisfies the following two properties:
\begin{itemize}
\item for all $i,j$ we have $\pi_i(\RR) = \pi_j(\RR)$, and
\item for every sequence $p = ((i_1,j_1), ..., (i_k, j_k))$ of pairs of coordinates of $\RR$, if we define the binary relation $\PP_p \le \AA^2$ by
\[
\PP_p \coloneqq \pi_{i_1,j_1}(\RR) \circ \cdots \circ \pi_{i_k,j_k}(\RR),
\]
then for every $B \subseteq A$, we have
\[
B + \PP_p = B \implies B - \PP_p = B.
\]
\end{itemize}
\end{definition}

\begin{proposition} For every algebra $\AA$ with underlying set $A$ and every tuple $\aa \in A^n$, the relation $\Sym(\aa)$ is reversible.
\end{proposition}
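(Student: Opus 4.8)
The plan is to verify the two clauses in the definition of reversibility. The first clause is immediate from Definition \ref{symdef}: for every permutation $\sigma$ the $\sigma$-coordinate of the $k$ generators of $\Sym(\aa)$ runs over $a_1,\dots,a_k$, so $\pi_{i_\sigma}(\Sym(\aa))=\operatorname{Sg}_{\AA}(\{a_1,\dots,a_k\})$ for every $\sigma$, and in particular all coordinate projections coincide. Since replacing $\AA$ by the subalgebra $\operatorname{Sg}_{\AA}(\{a_1,\dots,a_k\})$ changes neither $\Sym(\aa)$ nor any relation built from it, I would assume from now on that $\AA=\operatorname{Sg}_{\AA}(\{a_1,\dots,a_k\})$; then $\Sym(\aa)$ is subdirect, every two-coordinate projection of it is a subdirect binary relation, and — since a composition of subdirect binary relations is again subdirect — so is every relation $\PP_p$.

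For the second clause, fix a sequence $p$ and write $\PP_p=\mathbb{Q}_1\circ\cdots\circ\mathbb{Q}_L$ with $\mathbb{Q}_\ell=\pi_{i_\ell,j_\ell}(\Sym(\aa))$. Identifying coordinates of $\Sym(\aa)$ with permutations, one has $\pi_{\sigma,\tau}(\Sym(\aa))=R_{\tau\sigma^{-1}}$, where $R_\mu:=\operatorname{Sg}_{\AA^2}(\{(a_m,a_{\mu(m)}):m\le k\})$; these satisfy $R_\mu^-=R_{\mu^{-1}}$, $R_{\mathrm{id}}=\Delta_{\AA}$ (the diagonal, because $\aa$ generates $\AA$), and the chaining inclusion $R_\mu^{\circ t}\supseteq R_{\mu^t}$ already used in the proof of Proposition \ref{baby-reversible} (with $t=k!-1$ and $\mu^{k!}=\mathrm{id}$ this gives $R_\mu^{\circ(k!-1)}\supseteq R_\mu^-$). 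Now assume $B+\PP_p=B$. Every $b\in B$ is then a $\PP_p$-image of some element of $B$, hence a $\PP_p^-$-image of an element of $B$, so $B\subseteq B-\PP_p$; the whole difficulty is the reverse inclusion $B-\PP_p\subseteq B$.

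To obtain it I would move to the power $\mathbb{D}:=\AA^L$ and package the cycle of sets produced by $\PP_p$ into a single relation. Put $B_0:=B$ and $B_\ell:=B_{\ell-1}+\mathbb{Q}_\ell$, so that $B_L=B$; the goal $B-\PP_p=B$ is equivalent to $B_\ell-\mathbb{Q}_\ell=B_{\ell-1}$ for all $\ell$. On $\mathbb{D}$ let $\widehat{\SS}\le\mathbb{D}^2$ be the relation obtained from the product $\mathbb{Q}_1\times\cdots\times\mathbb{Q}_L$ (a subalgebra of $\mathbb{D}^2$) by composing with the one-step cyclic shift of the $L$ coordinates (whose graph is also a subalgebra of $\mathbb{D}^2$). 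One checks directly that $\widehat{B}:=B_0\times B_1\times\cdots\times B_{L-1}$ satisfies $\widehat{B}+\widehat{\SS}=\widehat{B}$, and that unwinding $\widehat{B}-\widehat{\SS}=\widehat{B}$ through the shift and the product is exactly the system $\{B_\ell-\mathbb{Q}_\ell=B_{\ell-1}\}_\ell$. So it suffices to establish the length-one reversibility condition for $\widehat{\SS}$, and for that I would invoke Proposition \ref{baby-reversible} after exhibiting $\widehat{\SS}$ as a two-coordinate projection of $\Sym(\widehat{\aa})$ for a suitable tuple $\widehat{\aa}$ over $\mathbb{D}$: because a term operation of $\mathbb{D}=\AA^L$ is exactly an $L$-tuple of term operations of $\AA$, one tries to choose $\widehat{\aa}$ whose $\ell$-th coordinate slice realizes $\aa$ reordered according to the coordinates used by $\mathbb{Q}_\ell$, with the shift folded in, so that one pair of coordinates carves out $\widehat{\SS}$.

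The main obstacle is precisely this realization step. The difficulty is that the factors $\mathbb{Q}_\ell$ generally use different permutations of the $k!$ coordinates of $\Sym(\aa)$, while a single pair of coordinates of $\Sym(\widehat{\aa})$ must cut out the twisted product $\widehat{\SS}$ all at once; making this compatible is where the regular $S_k$-action on the coordinates of $\Sym(\aa)$ and the coordinatewise freedom in $\widehat{\aa}$ must be exploited, and if the most naive choice of $\widehat{\aa}$ fails one has to enlarge the ambient symmetric relation or absorb the cyclic shift into it. Everything else — the first clause, the two easy inclusions, the chaining inclusion, and the bookkeeping that converts $\widehat{B}-\widehat{\SS}=\widehat{B}$ back into the identities $B_\ell-\mathbb{Q}_\ell=B_{\ell-1}$ — is routine.
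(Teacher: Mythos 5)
Your opening observations are correct: all one-coordinate projections of $\Sym(\aa)$ coincide, the reduction to the subdirect case is legitimate, the identification $\pi_{i_\sigma,i_\tau}(\Sym(\aa)) = \Sg{\AA^2}{(a_m,a_{\tau\sigma^{-1}(m)}) : m \le k}$ is right, and Proposition~\ref{baby-reversible} indeed handles a single two-coordinate projection. But you also correctly identify, in your own words, that the whole weight of the proof lands on the ``realization step'' --- exhibiting the shift-twisted product $\widehat{\SS} \le (\AA^L)^2$ as a two-coordinate projection of some $\Sym(\widehat{\aa})$ --- and you do not carry out that step. This is not a cosmetic gap: a two-coordinate projection of $\Sym(\widehat{\aa})$ over $\AA^L$ is necessarily a \emph{direct} product $\prod_\ell \Sg{\AA^2}{(\widehat{a}_{m,\ell}, \widehat{a}_{\mu(m),\ell})}$ of binary relations, one per coordinate, with the same $\mu \in S_k$ in every factor; your $\widehat{\SS}$ is a composition of such a product with a cyclic shift, which is not of this form. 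Enlarging the symmetric relation or ``absorbing the shift'' is exactly what would need an argument, and none is given. The sub-observation that $R_{\mu}^{\circ t} \supseteq R_{\mu^t}$ also does not extend to compositions of distinct $R_{\mu_\ell}$'s: there is no clean identity expressing $R_{\mu_\ell}^-$ inside an iterated composition of $R_{\mu_1}, \ldots, R_{\mu_L}$, which is precisely why the single-step argument of Proposition~\ref{baby-reversible} cannot be chained directly.

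The paper closes the gap by a completely different and much shorter route: the uniform probability distribution on $\Sym(\aa)$ has equal one-coordinate marginals (the right $S_k$-action on the coordinate indices permutes the generators and hence the elements of $\Sym(\aa)$, acting transitively on coordinates), so each $\pi_{i,j}(\Sym(\aa))$ supports a positive distribution whose two marginals agree and, crucially, agree with the marginals of every other pair. The composition step in the proof of $(e) \implies (f)$ of Proposition~\ref{bin-reversible} then threads these distributions together, giving $(e)$ and hence $(a)$ for every $\PP_p$. If you want to keep a combinatorial flavor, you would need a replacement for this probabilistic gluing; as written, the proposal is an unfinished program rather than a proof.
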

\begin{proof} Since the marginal distributions of each coordinate of the uniform distribution on the set of tuples in $\Sym(\aa)$  are equal, this follows from the implication $(e) \implies (a)$ of Proposition \ref{bin-reversible} below. 
\end{proof}

We have the following strong refinement of our main conjecture.

\begin{conjecture}\label{reversible-conj} Suppose that $\AA$ is a finite idempotent algebra, such that for every subquotient $\BB \in HS(\AA)$ there is some element $b \in \BB$ which is fixed by every automorphism of $\BB$. Then every reversible relation $\RR \le \AA^n$ contains a constant tuple.
\end{conjecture}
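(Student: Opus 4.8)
The plan is to prove the statement by induction on the pair $(\lvert A\rvert, n)$ ordered lexicographically, after first extracting from the hypothesis the strong structural fact that $\AA$ — and every subquotient of $\AA$ — omits tame congruence types $\mathbf 1$ and $\mathbf 2$, i.e.\ has bounded width. Two observations give this. If $\AA$ were not Taylor, then (by the standard characterization of non-Taylor finite idempotent algebras) it would have a subquotient term-equivalent to a two-element algebra all of whose operations are projections; but the transposition is a fixed-point-free automorphism of that subquotient, contradicting the hypothesis. And if $\AA$ had a nontrivial abelian subquotient $\BB$, then $\BB$ — being idempotent and Taylor — would be affine, hence polynomially equivalent to a module, and every translation $x\mapsto x+c$ with $c\neq 0$ would be a fixed-point-free automorphism of the idempotent reduct of $\BB$, again a contradiction. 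So the hypothesis places us squarely in the Barto--Kozik absorption theory, and the intention is to follow the architecture of the bounded-width theorem throughout.

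Within the induction, the first move is to reduce to the case that $\RR$ is subdirect: since $\RR$ is reversible its coordinate projections agree, say $\pi_i(\RR)=A'$, and $A'$ is a subalgebra; replacing $\AA$ by $\AA|_{A'}$ (still a subquotient, so the hypothesis persists) either shrinks the domain, in which case we finish by induction, or leaves $\RR$ subdirect. The case $n=1$ is immediate. For $n=2$, let $\eta$ be a linking congruence of $\RR$. If $\eta$ is proper, then $\RR$ is, componentwise, a union of linked subdirect relations between corresponding $\eta$-blocks; the hypothesis applied to the proper quotient $\AA/\eta$ (whose automorphisms govern the induced permutation of blocks) forces some component to be ``square'', and restricting $\RR$ to the corresponding subalgebra $C\le\AA$ yields a smaller reversible instance. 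If $\eta$ is the full congruence, so $\RR$ is linked, then the Absorption Theorem applies: either $\AA$ has no proper absorbing subalgebra, whence a linked subdirect binary relation over an absorption-free Taylor algebra is the full relation $A^2$ and contains a loop; or $\AA$ has a proper absorbing subalgebra, and restricting $\RR$ to an absorbing subrelation supported on proper absorbing subalgebras of the two coordinates reduces the domain size.

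For $n\ge 3$ the cleanest route is to read $\RR$ as a bounded-width constraint satisfaction instance: variables $x_1,\ldots,x_n$, the single constraint $(x_1,\ldots,x_n)\in\RR$, and equalities $x_1=x_2,\ldots,x_{n-1}=x_n$, so that a solution is exactly a constant tuple of $\RR$. Since $\AA$ has bounded width, this instance is solvable iff its $(2,3)$-consistency closure is nonempty, and the key point should be that \emph{reversibility of $\RR$ is precisely what prevents this closure from collapsing}: the implication $B+\PP_p=B\implies B-\PP_p=B$, applied to the binary relations $\PP_p$ arising along the cycles of the instance, is exactly what rules out the cyclic obstruction to local consistency. (A more hands-on alternative is to merge coordinates one at a time, using that $\pi_{1,2}(\RR)$ is again reversible — hence has a loop by the $n=2$ case — and that its set of loops is a subalgebra; but this needs care to keep the restricted relation subdirect.)

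The two places where real work is hidden, and where I expect the main obstacle to lie, are: (1) verifying that reversibility is \emph{inherited} by each reduction — passage to $\AA|_{A'}$, restriction to an $\eta$-block or to an absorbing subalgebra, and whatever arity reduction is used — which is not automatic, since the definition quantifies over all composite path relations and one must show these restrict or descend compatibly; and (2) the linked, absorption-free core, which is genuinely a loop-lemma-type statement. Note that reversibility alone cannot possibly suffice there: a directed $p$-cycle is a reversible, loopless relation, preserved by the affine operations on $\ZZ/p\ZZ$ — but that algebra is affine and so is excluded by the hypothesis. Thus the proof must use the conclusion ``$\AA$ omits $\mathbf 1$ and $\mathbf 2$'' essentially, and the heart of the matter is to show that reversibility together with bounded width forces a constant tuple. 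I expect this to come out of an adaptation of the Barto--Kozik proof that bounded-width algebras solve their CSP, with reversibility of $\RR$ substituted for the cycle-consistency (Prague instance) hypothesis at each step.
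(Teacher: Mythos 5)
The statement you are attempting is labeled a \emph{conjecture} in the paper, not a theorem, and there is no proof of it there to compare against. The paper establishes it only in two special cases: for binary relations (over any $\AA$ satisfying the hypothesis), and for arbitrary arities over the single algebra $(\{-,0,+\},\sgn(x+y))$. Your $n=2$ reasoning is in the same spirit as the paper's binary-case theorem --- linking congruence, fixed block via the hypothesis, then Loop Lemma/absorption --- so that portion is a reasonable recovery of a result the paper does prove.

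The $n\ge 3$ step, however, is the entire open content, and the plan as written has a concrete gap. Reversibility of $\RR$ encodes exactly the paper's conditions (P1) and (P2), which are tied to the basic \emph{linear programming} relaxation; the Barto--Kozik bounded-width theorem requires the full weak-Prague conditions, including (P3), which are tied to the \emph{semidefinite} relaxation and are strictly stronger. There is no known argument that (P1) and (P2) alone force a solution for a bounded-width CSP --- ``substituting reversibility for the Prague hypothesis at each step'' is precisely what the paper's Conjecture \ref{p1-p2} asks for, not something one can expect to inherit mechanically from the existing bounded-width proof. A second obstruction: the fixed-point hypothesis is strictly stronger than bounded width. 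The two-element majority algebra has bounded width (even a near-unanimity term), yet the $0 \leftrightarrow 1$ swap is a fixed-point-free automorphism, so it fails the hypothesis; any successful proof must therefore use the extra strength of the fixed-point condition essentially, not just the consequence ``omits types $\mathbf 1$ and $\mathbf 2$.'' Your deduction of bounded width from the hypothesis is sound, and you are right to flag the inheritance of reversibility under reductions as delicate --- the paper's own example of a non-reversible $\RR \le \AA^5$ all of whose binary projections are reversible shows the ``merge coordinates'' alternative fails as stated --- but the proposal identifies these obstacles without closing them.
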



The condition involving arbitrary compositions of two-variable projections of the relation $\RR$ in the definition of reversibility is necessary, as demonstrated by the following example.

\begin{example} Let $\AA = (\{-,0,+\},\sgn(x+y))$ and let $\RR \le \AA^5$ be the relation
\[
\RR \coloneqq \left\{(x_1, x_2, x_3, x_4, x_5) \in \AA^5 \mid x_1 + x_2 + x_3 \ge 1 \wedge x_4 = -x_5\right\}.
\]
Then every binary projection of $\RR$ is reversible, but the relation $\RR$ is \emph{not} reversible: we have
\[
\{-\} + \pi_{1,2}(\RR) + \pi_{4,5}(\RR) = \{-\}
\]
but
\[
\{-\} - \pi_{4,5}(\RR) - \pi_{1,2}(\RR) = \{-,0,+\}.
\] Since $\RR$ does not contain any of the constant tuples $(-,\ldots,-)$, $(0,\ldots,0)$, or $(+, \ldots, +)$, we need the stronger condition about arbitrary compositions of two-variable projections.
\end{example}

For binary relations, the concept of reversibility simplifies.

\begin{proposition}\label{bin-reversible} If $\RR \le_{sd} \AA^2$ is a binary subdirect relation on a finite algebra $\AA$ with underlying set $A$, then the following are equivalent.
\begin{itemize}
\item[(a)] For every $B \subseteq A$, we have
\[
B + \RR = B \implies B - \RR = B.
\]

\item[(b)] If we consider the ordered pairs of $\RR$ as the edges of a directed graph $G$ with vertex set $A$, then every weakly connected component of $G$ is also strongly connected.

\item[(c)] If we consider the ordered pairs of $\RR$ as the edges of a directed graph $G$ with vertex set $A$, then every directed edge of $G$ is contained in a directed cycle of $G$.

\item[(d)] There is some $n \ge 1$ such that $\RR^- \subseteq \RR^{\circ n}$.

\item[(e)] There is a positive probability distribution with support $\RR$ such that the marginal distributions on the first and second coordinates agree.

\item[(f)] The binary relation $\RR$ is reversible; that is, every binary relation which can be written as a composition of copies of $\RR$ and $\RR^-$ satisfies (a).
\end{itemize}
\end{proposition}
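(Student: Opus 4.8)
The plan is to prove the cycle of equivalences (a)–(f) for a binary subdirect relation $\RR \le_{sd} \AA^2$ by establishing implications along a convenient loop, with a few extra shortcuts. The graph-theoretic conditions (b) and (c) are clearly equivalent: an edge lies in a directed cycle if and only if its two endpoints lie in the same strongly connected component, and since the component is by definition weakly connected, condition (b) says exactly that no weak component splits into more than one strong component; I would dispatch $(b)\Leftrightarrow(c)$ first as a warm-up. Next I would show $(c)\Rightarrow(d)$: if every edge lies in a directed cycle, then for each edge $(y,x)$ of $\RR$ there is a directed path from $x$ back to $y$, giving $(y,x) \in \RR^{\circ m}$ for some $m$ depending on the edge; since $A$ is finite there are finitely many edges, and one can pad shorter paths (using that the relation is subdirect, so every vertex has an outgoing edge, hence lies on arbitrarily long walks once it lies on a cycle — actually more carefully, take $n$ to be a common multiple or use that $\RR^{\circ n} \subseteq \RR^{\circ n'}$ patterns stabilize) to get a uniform $n$ with $\RR^- \subseteq \RR^{\circ n}$.

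For $(d)\Rightarrow(a)$: if $B + \RR = B$ then $B + \RR^{\circ n} = B$ for all $n$, so from $\RR^- \subseteq \RR^{\circ n}$ we get $B - \RR = B + \RR^- \subseteq B + \RR^{\circ n} = B$; conversely the subdirectness forces $|B - \RR| \ge |B|$ (each element of $B$ has a preimage under $\RR$), wait — more directly, apply the same reasoning in reverse: from $B+\RR = B$ and finiteness, the map $b \maps$ "$\RR$-image" permutes the components appropriately, so $B-\RR=B$. Cleanest is: $B+\RR=B \Rightarrow B \subseteq B - \RR$ always (subdirectness gives each $b\in B$ a preimage, and that preimage maps into $B$... this needs care), so I would instead argue $B+\RR=B \Rightarrow B-\RR \subseteq B$ via (d), and $B \subseteq B-\RR$ via a counting argument using that $B+\RR = B$ forces the restriction of $\RR$ to $B$ to be subdirect on $B\times B$. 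Then $(a)\Rightarrow(b)$ by taking $B$ to be a single strong component that is a "sink" within its weak component: closure under $+\RR$ forces closure under $-\RR$, contradicting the existence of an incoming edge from outside unless the weak component is the single strong component. For $(a)\Rightarrow(f)$ I note that any composition $\QQ$ of copies of $\RR$ and $\RR^-$ is again subdirect, and $B+\QQ=B$ unwinds step by step — each factor of $\RR$ or $\RR^-$ preserves the set $B$ by (a) applied to $\RR$ and to $\RR^-$ (and $(a)$ for $\RR$ is symmetric in $\RR \leftrightarrow \RR^-$ once we know it holds for all $B$) — so $(f)$ follows; and $(f)\Rightarrow(a)$ is trivial.

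The genuinely substantive equivalence is $(e)$, and I expect this to be the main obstacle. The implication $(e)\Rightarrow(a)$: a positive distribution on $\RR$ with equal marginals $\mu$ on both coordinates means that, viewing $\RR$ as a weighted digraph, the marginal $\mu$ is simultaneously the out-flow and in-flow at each vertex, so $\mu$ is a stationary distribution of the associated Markov chain with full support; by standard Markov chain theory every weak component is then strongly connected, giving (b), hence (a). For $(b)/(c)\Rightarrow(e)$ — the hard direction — I would, within each strongly connected component, invoke the existence of a positive stationary distribution (Perron–Frobenius for the irreducible nonnegative transition matrix obtained by normalizing, say, the all-ones weights out of each vertex), which yields positive edge weights with balanced in/out flow; then combine across components (each weak component is a single strong component by hypothesis) and normalize to get a global probability distribution with support exactly $\RR$ and matching marginals. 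The care needed is purely in assembling these per-component flows into one positive distribution and checking the marginal-agreement bookkeeping; the rest is routine digraph combinatorics.
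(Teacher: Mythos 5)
Your cycle of implications among (a), (b), (c), (d) and your Markov-chain route for $(c) \Rightarrow (e) \Rightarrow (b)$ are sound in substance, though the paper is more elementary: it gets $(c) \Rightarrow (e)$ by covering the edge set with directed cycles and averaging the uniform distributions on them (so the balance of the marginals is automatic), and gets $(e) \Rightarrow (a)$ by a short counting inequality comparing $p(B+\RR)$ to $p(B)$, without invoking recurrence theory. Your $(d) \Rightarrow (a)$ exposition stumbles but lands on the right point: subdirectness together with $B + \RR = B$ forces every $b \in B$ to have an out-neighbor in $B$, hence $B \subseteq B - \RR$, and the reverse containment $B - \RR \subseteq B$ is what actually uses $\RR^- \subseteq \RR^{\circ n}$.

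The genuine gap is $(a) \Rightarrow (f)$. You claim that if $\RR'$ is a composition of copies of $\RR$ and $\RR^-$ and $B + \RR' = B$, then each factor preserves $B$. That is false: take $A = \{1,2,3,4\}$, $\RR = \{(1,2),(2,1),(3,4),(4,3)\}$ (subdirect and satisfying (a)--(d)), and $\RR' = \RR \circ \RR$, which is the identity relation. Then $B = \{1,3\}$ has $B + \RR' = B$ but $B + \RR = \{2,4\} \neq B$, so the first factor does not preserve $B$. The conclusion $B - \RR' = B$ happens to hold here, but your ``unwinding'' provides no mechanism to get it in general. The paper closes this gap in two separate ways: via $(e) \Rightarrow (f)$, composing positive balanced distributions on $\RR$ and $\RR^-$ to get one on any $\RR'$ and then running the $(e) \Rightarrow (a)$ computation on $\RR'$ directly; or via $(d) \Rightarrow (f)$, using the inclusions $\RR \circ \RR^- \supseteq \{(a,a) : a \in A\}$ and $\RR^- \circ \RR \supseteq \{(a,a) : a \in A\}$ (which hold by subdirectness) together with an analysis of the partial sums of the $\RR/\RR^-$ pattern to show that $\RR'$ itself satisfies $(d)$. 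Either requires an idea that your proposal does not supply.
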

\begin{proof} $(a) \implies (b)$: define a quasiorder $\preceq$ on $A$ by $a \preceq b$ if there is any $k \ge 0$ such that $(a,b) \in \RR^{\circ k}$. For any $a \in A$, there is a $\preceq$-maximal element $b \in A$ such that $a \preceq b$, by the finiteness of $A$. Let $B$ be the set of all $b'$ such that $b \preceq b'$, then the $\preceq$-maximality of $b$ implies that $B$ is a strongly connected component of $\RR$ and that $B + \RR = B$. Then $(a)$ implies that we have $B - \RR = B$, so we have
\[
a \in \{b\} - \RR^{\circ k} \subseteq B - \RR^{\circ k} = B,
\]
and similarly any element in the weakly connected component containing $a$ is also contained in $B$.

$(b) \implies (c)$ is obvious. For $(c) \implies (d)$, pick for each directed edge of $\RR$ a directed cycle containing it, and choose $n$ such that $n+1$ is a common multiple of the lengths of all of these directed cycles. $(d) \implies (a)$ and $(f) \implies (a)$ are also obvious.

To prove that $(c) \implies (e)$, find a collection $\mathcal{C}$ of directed cycles of $\RR$ that contains every edge of $\RR$ at least once. Define a probability distribution on $\RR$ by the following two step process: first pick a uniformly random cycle $C \in \mathcal{C}$, then pick a uniformly random edge $(x,y) \in C$.

For $(e) \implies (a)$, let $p_{(a,b)} > 0$ be the probability assigned to a given element $(a,b) \in \RR$ (and set $p_{(a,b)} = 0$ for $(a,b) \not\in \RR$), and let
\[
p_b \coloneqq \sum_{a \in A} p_{(a,b)} = \sum_{c\in A} p_{(b,c)}
\]
be the marginal probability of seeing $b$ on either the first or second coordinate. For any subset $B \subseteq A$, define $p(B)$ by
\[
p(B) \coloneqq \sum_{b \in B} p_b.
\]
Then we have
\[
p(B + \RR) = \sum_{b \in B+\RR} p_{b} = \sum_{b \in B+\RR} \sum_{a \in \{b\}-\RR} p_{(a,b)} \ge \sum_{b \in B+\RR} \sum_{a \in B} p_{(a,b)} = \sum_{a \in B} p_{a} = p(B),
\]
with equality only when every element $b \in B + \RR$ has $\{b\} - \RR \subseteq B$. If $B + \RR = B$, then we must have equality above, so $B - \RR = B + \RR - \RR = B$.

Given the equivalence between $(a)$ and $(e)$, $(e) \implies (f)$ follows from the fact that if $\RR$ and $\SS$ are any pair of binary relations such that there are positive probability distributions $p$ and $q$ supported on $\RR$ and $\SS$, respectively, such that the marginal of $p$ on the second coordinate equals the marginal of $q$ on the first coordinate, then there is a positive probability distribution ``$p \circ q$'' supported on $\RR \circ \SS$ such that the marginals of $p$ and $p \circ q$ on the first coordinate are equal, and the marginals of $p \circ q$ and $q$ on the second coordinate are equal.

The equivalence of $(f)$ can also be shown by proving $(d) \implies (f)$. Let $\RR'$ be a composition of $i$ copies of $\RR$ and $j$ copies of $\RR^-$, in some order;
it suffices to show that $\RR'$ satisfies $(d)$.
If $i > j$, then $\RR' \supseteq \RR^{\circ (i-j)}$, so if $\RR^- \subseteq \RR^{\circ n}$, then
\[
(\RR^-)^{\circ (i-j)} \subseteq \RR^{\circ n(i-j)} \subseteq \RR'^{\circ n},
\]
and we can finish since $\RR'$ and $\RR'^-$ are each contained in some composition of $\RR^{\circ (i-j)}$ and $(\RR^-)^{\circ (i-j)}$. The case $i < j$ is similar, so we are left with the case $i = j$.

To deal with the case $i = j$, the case where $\RR'$ is a composition of an equal number of copies of $\RR$ and $\RR^-$ in some order, code the sequence of copies of $\RR$ and $\RR^-$ as a sequence of $i$ copies of $+$ and $i$ copies of $-$. Let $a$ and $-b$ be the largest value and smallest value, respectively, of the partial sums of the sequence of $+$'s and $-$'s. Then it's easy to see that $\RR'$ contains the relations
\[
\RR_{\pm a} \coloneqq \RR^{\circ a} \circ (\RR^-)^{\circ a}
\]
and
\[
\RR_{\mp b} \coloneqq (\RR^-)^{\circ b} \circ \RR^{\circ b}.
\]
Thus, both $\RR'$ and $(\RR')^-$ are contained in some composition of copies of $\RR_{\pm a}$ and $\RR_{\mp b}$, as desired.
\end{proof}

\begin{theorem} Conjecture \ref{reversible-conj} is true for the algebra $\AA = (\{-,0,+\},\sgn(x+y))$.
\end{theorem}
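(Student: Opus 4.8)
First I would verify that $\AA$ meets the hypothesis of Conjecture~\ref{reversible-conj} and then reduce to subdirect relations. The subalgebras of $\AA$ are the singletons, $\{-,0\}$, $\{0,+\}$, and $\AA$, and $\AA$ is simple: the only partitions of $\{-,0,+\}$ into subalgebras besides the two trivial ones are $\{-,0\}\mid\{+\}$ and $\{0,+\}\mid\{-\}$, and neither is a congruence. Hence $HS(\AA)$ consists, up to isomorphism, of $\AA$, the two two-element semilattices $(\{-,0\},\sgn)$ and $(\{0,+\},\sgn)$, and the one-element algebra; each has a fixed point of its automorphism group, since $0$ is the unique neutral element of $\sgn(x+y)$ (and so fixed by every automorphism of $\AA$) and semilattices of size at most two are rigid. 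For the reduction: given a nonempty reversible $\RR \le \AA^n$, all $\pi_i(\RR)$ equal a single subalgebra $S$; if $\size{S}=1$ then $\RR$ is a constant tuple, and if $S$ is $\{-,0\}$ or $\{0,+\}$ then $\RR$ is a subdirect subalgebra of a power of the two-element semilattice $S$, hence contains its coordinatewise join, which by subdirectness is the constant tuple carrying the absorbing element of $S$ in every coordinate. So it remains to handle $\RR \le_{sd} \AA^n$.

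Second, I would reformulate the goal and pin down the possible binary projections. Identifying $\{-,0,+\}$ with $\{-1,0,1\} \subseteq \RR$ and using that $\Clo(\AA)$ contains $\sgn(x_1+\cdots+x_k)$ for all $k$ (see the treatment of $\gen{f_{+0-}}$ in Section~\ref{size3}), applying these operations to multisets of members of $\RR$ shows that $(+,\dots,+) \in \RR$ iff $\conv(\RR)$ has a coordinatewise-positive point, $(-,\dots,-)\in\RR$ iff it has a coordinatewise-negative point, and $(0,\dots,0)\in\RR$ iff $\conv(\RR)$ contains the origin. So if $\RR$ contained no constant tuple, a separating hyperplane argument would produce integer vectors $a,b\in(\ZZ_{\ge 0})^n\setminus\{0\}$ and $c\in\ZZ^n$ with $a\cdot\rr\le 0$, $b\cdot\rr\ge 0$, and $c\cdot\rr\ge 1$ for all $\rr\in\RR$. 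On the other hand, applying Proposition~\ref{bin-reversible} to each binary projection $\pi_{i,j}(\RR)\le_{sd}\AA^2$ (subdirect and reversible, being a binary projection of a reversible relation) yields, after a short finite computation, that the only subdirect reversible binary relations on $\AA$ are $\AA^2$, the graph of the identity, the graph of the automorphism $\alpha$ that interchanges $-$ and $+$ and fixes $0$, and the two relations $\{(x,y):x+y\le 0\}$ and $\{(x,y):x+y\ge 0\}$; in particular the two linear orders on $\{-,0,+\}$ are \emph{not} reversible.

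Third, the hard step: deriving a contradiction between the functionals $a,b,c$ and reversibility. Choosing $a$ with support of minimal size, the classification of binary projections lets one shrink that support whenever two of its coordinates are linked by a graph-of-function projection (the identity or $\alpha$, here using that $\alpha$ fixes $0$) or are ``$\ge0$-linked''; this leaves the cases where the support of $a$ has size two, forcing the corresponding binary projection to be the graph of $\alpha$ or $\{(x,y):x+y\le 0\}$, and symmetrically for $b$. One then has to show that these small configurations, chained together with the analogous consequences of $c$ through the \emph{composition} clause of reversibility (that $B+\PP_p = B \implies B - \PP_p = B$ for compositions $\PP_p$ of binary projections of $\RR$), are contradictory. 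The composition clause is genuinely needed: the Example following Conjecture~\ref{reversible-conj} exhibits a relation all of whose binary projections are reversible but which has no constant tuple. Producing the composition $\PP_p$ and witnessing set $B$ from the separating functionals is the main obstacle; everything preceding it is bookkeeping with the small algebra $\AA$ and Proposition~\ref{bin-reversible}.
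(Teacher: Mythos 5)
Your first two steps are sound and close to what the paper does: the reduction to the subdirect case, the verification that $\AA$ meets the hypothesis of Conjecture~\ref{reversible-conj}, and the enumeration (via Proposition~\ref{bin-reversible}) of the five reversible binary subdirect relations on $\AA$ all match. The separating-hyperplane reformulation in your second step is also correct as stated. However, your third step is not a proof, and you say so yourself (``the main obstacle''). That is a genuine gap, and it occurs precisely where the work has to happen.

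What you are missing is the observation the paper uses in its place: not just that each individual $\pi_{i,j}(\RR)$ is one of the five reversible binary relations, but that the composition clause of reversibility, applied to two-step sequences, forbids certain \emph{pairs} of binary projections from coexisting. Concretely, $\{(x,y): x=-y\}\circ\{(y,z): y+z\ge 0\}$ and $\{(x,y): x+y\le 0\}\circ\{(y,z): y+z\ge 0\}$ both equal the non-reversible order $\{(x,z): x\le z\}$, so for a reversible $\RR$ the full family $\{\pi_{i,j}(\RR)\}_{i,j}$ must lie inside one of three compatible sets: (a) $\{=, \text{swap}, A^2\}$, (b) $\{=, x{+}y\ge 0, A^2\}$, (c) $\{=, x{+}y\le 0, A^2\}$. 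Once this trichotomy is in hand, the constant tuple drops out without any separating-functional argument: a short induction using $\sgn(x{+}y{+}z)\in\Clo(\AA)$ shows that any $\SS\le\AA^n$ with $(+,+)\in\pi_{i,j}(\SS)$ for all $i,j$ contains $(+,\dots,+)$, which handles case (b) (and (c) by symmetry), and in case (a) one applies the same claim to a maximal ``swap-free'' coordinate set $I$ to get $s\in\RR$ and then takes $\sgn(s+(-s))=(0,\dots,0)$. Your plan routes everything through $a$, $b$, $c$ and then tries to rediscover the composition constraint from them; that direction is much harder, it is not carried out, and it duplicates work the trichotomy makes trivial. I would abandon the hyperplane detour and instead isolate and prove the pairwise-forbidden-projection lemma and the $(+,+)$-to-$(+,\dots,+)$ claim.
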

\begin{proof} Let $\AA$ have underlying set $A$, and let $\RR \le \AA^n$ be a reversible relation. If $\pi_i(\RR) \ne \AA$, then $\pi_i(\RR)$ is a semilattice - we leave this case to the reader. We are left with the case $\pi_i(\RR) = A$ for all $i$; that is, the case where $\RR$ is subdirect.

A brute force enumeration shows that every binary subdirect relation on $\AA$ is one of the seven relations
\[
\{(x,y) \in A^2 \mid x = y\}, \{(x,y) \in A^2 \mid x = -y\}, \{(x,y) \in A^2 \mid x \le y\},\]\[ \{(x,y) \in A^2 \mid x \ge y\}, \{(x,y) \in A^2 \mid x + y \ge 0\}, \{(x,y) \in A^2 \mid x + y \le 0\}, A^2.
\]
In particular, each binary subdirect relation $\SS \le_{sd} \AA^2$ is completely determined by the intersection $\SS \cap \{-,+\}^2$; in fact, the composition of any pair of binary subdirect relations on $\AA$ is also determined by the composition of their restrictions to $\{-,+\}$.

Among these seven relations, the two binary relations $\{(x,y) \in A^2 \mid x \le y\}$ and $\{(x,y) \in A^2 \mid x \ge y\}$ are \emph{not} reversible. Since
\[
\{(x,y) \in A^2 \mid x = -y\} \circ \{(y,z) \in A^2 \mid y + z \ge 0\} = \{(x,z) \in A^2 \mid x \le z\}
\]
and
\[
\{(x,y) \in A^2 \mid x + y \le 0\} \circ \{(y,z) \in A^2 \mid y + z \ge 0\} = \{(x,z) \mid x \le z\},
\]
we see that every reversible subdirect arity-$k$ relation $\RR$ either
\begin{itemize}
    \item[(a)] has
    $
    \pi_{i,j}(\RR) \in \left\{\{(x, y) \in A^2 \mid x=y\}, \{(x, y) \in A^2 \mid x = -y\}, A^2\right\}
    $
    for all integers $1 \leq i,j \leq k$,
    \item[(b)] has
    $
    \pi_{i,j}(\RR) \in \left\{\{(x, y) \in A^2 \mid x=y\}, \{(x, y) \in A^2 \mid x + y \ge 0\}, A^2\right\}
    $
    for all integers $1 \leq i,j \leq k$, or
    \item[(c)] has
    $
    \pi_{i,j}(\RR) \in \left\{\{(x, y) \in A^2 \mid x=y\}, \{(x, y) \in A^2 \mid x + y \le 0\}, A^2\right\} 
    $
    for all integers $1 \leq i,j \leq k$. 
\end{itemize}
We will show that in case $(a)$, we have $(0,\ldots,0) \in \RR$, in case $(b)$ we have $(+, \ldots, +) \in \RR$, and in case $(c)$ we have $(-, \ldots, -) \in \RR$. By symmetry, we only have to consider cases $(a)$ and $(b)$. Case $(b)$ follows from the following claim.

\begin{claim}
For any relation $\SS \le \AA^n$ such that $(+,+) \in \pi_{i,j}(\SS)$ for all $i,j$, we have $(+,\ldots,+) \in \SS$.
\end{claim}

\begin{claimproof}
We will prove, by induction on $\size{I}$ that for every subset $I \subseteq \{1, 2, \ldots, n\}$ there is a tuple $s_I \in \SS$ such that its $i^\th$ coordinate is $+$ for all $i \in I$. The base case $\size{I} \le 2$ is our assumption on $\SS$. For the inductive step $|I| \ge 3$, let $i$,$j$, and $k$ be any three distinct elements of $I$. Then we define $s_I$ inductively by
\[
s_I \coloneqq \sgn\left(s_{I\setminus \{i\}},s_{I\setminus\{j\}},s_{I\setminus\{k\}}\right),
\]
using the fact that the three-variable operation $\sgn(x+y+z)$ is in the clone generated by the two-variable operation $\sgn(x+y)$, as proven in the $\gen{f_{+0-}}$ case of section \ref{size3}.
\end{claimproof}

\noindent Case $(a)$ also follows from the claim. To see this, find a maximal subset $I \subseteq \{1, 2, \ldots, n\}$ such that no pair of indices $i,j \in I$ has $\pi_{i,j}(\RR) = \{(x, y) \in A^2 \mid x = -y\}$. Then we can use the claim to show that the tuple $s$ given by
\[
\pi_i(s) = \begin{cases}+ & i \in I\\ - & i \not\in I,\end{cases}
\]
is in $\RR$. By symmetry, $-s \in \RR$ as well. Therefore $\operatorname{sgn}((s)+(-s)) = (0,\ldots,0) \in \RR$, so we are done.
\end{proof}

Using some stronger background theory, we can confirm that Conjecture \ref{reversible-conj} is true for binary relations.


\begin{theorem} Conjecture \ref{reversible-conj} holds for binary relations: if every subquotient of a finite idempotent algebra $\AA$ has an element fixed by its automorphism group, then every binary reversible relation $\RR \le \AA^2$ contains a constant tuple.
\end{theorem}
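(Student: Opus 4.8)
The plan is to argue by induction on $\size{A}$, reducing every case either to a proper subalgebra of $\AA$ or to a proper quotient of $\AA$. First I would pass to the common projection $\pi_1(\RR)=\pi_2(\RR)$: this is a subalgebra of $\AA$, so its subquotients are subquotients of $\AA$ and the hypothesis is inherited, and on it $\RR$ is subdirect and still reversible; thus we may assume $\RR\le_{sd}\AA^2$. I would then record the structural fact that makes the hypothesis bite: every subquotient of $\AA$, and in particular $\AA$ itself, is Taylor. Indeed, by the standard characterization of Taylor algebras a finite idempotent algebra that is not Taylor has a subquotient $\BB\in HS(\AA)$ that is a two-element algebra all of whose basic operations are projections, and the transposition of that two-element set is an automorphism of $\BB$ with no fixed point, contradicting the hypothesis.

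Next, split according to whether $\RR$ is linked. Using reversibility — by Proposition \ref{bin-reversible}, every edge of the digraph $(A,\RR)$ lies on a directed cycle — one checks that the linking congruences on the two coordinates coincide, to a single congruence $\eta$, and that the induced relation $\RR/\eta$ is the graph of an automorphism $\varphi$ of $\AA/\eta$. In the non-linked case $\eta\subsetneq A^2$: applying the hypothesis to $\AA/\eta\in H(\AA)$ gives a class $C\in A/\eta$ fixed by $\varphi$, hence by every automorphism. If $\eta$ is the identity congruence then $\varphi$ is an automorphism of $\AA$ and $\RR$ is literally its graph, so a $\varphi$-fixed point $a$ gives the loop $(a,a)\in\RR$. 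Otherwise $C$ is a proper, nontrivial subalgebra of $\AA$; since $\varphi$ fixes $C$, the set $C$ is closed under both the out-edges and the in-edges of $(A,\RR)$, hence is a union of weakly connected components, so $\RR\cap(C\times C)$ is subdirect and reversible on $C$, and the inductive hypothesis (applied to the smaller algebra $C$, which inherits the hypothesis) produces a loop.

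The linked case is the core of the argument, and it is where heavier background theory is needed. Here I would invoke the Absorption Theorem of Barto and Kozik: since $\AA$ is a finite idempotent Taylor algebra and $\RR\le_{sd}\AA^2$ is linked, either $\RR=A^2$ — in which case every $(a,a)$ is a loop — or $\AA$ has a proper absorbing subuniverse $B$. In the latter case one would like to restrict to $B$: a standard absorption lemma gives that $\RR\cap(B\times B)$ is subdirect on $B$, and $B$ inherits the hypothesis as a subalgebra, so the inductive hypothesis finishes — provided that $\RR\cap(B\times B)$ is again reversible.

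The main obstacle I expect is precisely this last point: that restricting $\RR$ to a proper absorbing subuniverse preserves reversibility. It does not hold for arbitrary subuniverses: the relation $\{(x,y):x\le y\}$ on a two-element semilattice is linked and subdirect but not reversible, so merely preserving linkedness and subdirectness is not enough. Equivalently, by Proposition \ref{bin-reversible}, one needs every edge of $\RR\cap(B\times B)$ to lie on a directed cycle inside $B$; in the probabilistic formulation, a positive probability distribution on $\RR$ with equal coordinate marginals must be pushable to one supported on $\RR\cap(B\times B)$. I would attack this by combining the absorbing term $t$ for $B$ with the shift maps $c_f$: applying $t$ coordinatewise to sufficiently many cyclic shifts of a directed cycle of $(A,\RR)$ through a prescribed edge of $\RR\cap(B\times B)$ ought to manufacture a directed cycle contained in $B$ that still passes through that edge. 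Verifying that the resulting closed walk is an honest directed cycle through the prescribed edge, and locating where the arity of $t$ and the ``at most one argument outside $B$'' clause get used, is the delicate part. A secondary point to pin down is the claim in the non-linked case that the two linking congruences coincide and that the induced quotient relation is the graph of an automorphism, which should follow from reversibility together with the standard fact that modulo its linking congruences a subdirect binary relation is the graph of an isomorphism between the two quotient algebras.
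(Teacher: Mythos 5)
Your proof takes a genuinely different route from the paper's. The Taylor observation and the reduction to the subdirect case coincide with the paper's, and your non-linked case (quotient by the linking congruence, find a $\varphi$-fixed class $C$, and induct on the subalgebra $C$, noting that directed cycles through edges of $\RR\cap(C\times C)$ cannot leave $C$ because $C + \RR = C - \RR = C$) is sound. The real divergence is the linked case: you reach for the Absorption Theorem plus an induction on $\size{A}$, whereas the paper never splits on linkedness at all. It defines $\theta$ as the stable linking congruence of $\RR^{\circ m}$ for large $m$, picks a $\theta$-class $B$ fixed by the induced automorphism of $\AA/\theta$, observes that $\RR \cap B^2$ is then a smooth digraph of algebraic length 1 on the Taylor subalgebra $\BB$, and finishes in one stroke with the Loop Lemma of \cite{barto}. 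That framing never requires a restricted relation to remain reversible, which is precisely the obstacle you flagged in your own linked case.

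The gap you identified is real, and your sketched fix does not close it. Applying the absorbing term $t$ coordinatewise to cyclic shifts of a directed $\RR$-cycle through an edge of $B\times B$ only produces an element of $B$ at coordinates where at most one of the shifted arguments lies outside $B$, and nothing controls how often the original cycle leaves $B$; so the output need not be a cycle inside $B$, let alone one through the prescribed edge. Relatedly, even subdirectness of $\RR\cap(B\times B)$ on $B$ requires the full force of the Absorption Theorem's machinery and is not automatic from $B$ absorbing $A$. If you want to pursue the absorption route, the honest repair is to re-prove the Loop Lemma for the restricted digraph rather than to transport reversibility into $B$ --- Barto and Kozik's proof of the Loop Lemma already runs exactly the absorption argument you are gesturing at. Invoking it as a black box, as the paper does, is both shorter and sidesteps the reversibility-preservation problem entirely; and it also removes the need for the case split and the induction on $\size{A}$.
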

\begin{proof} For idempotent algebras, the assumption implies that $\AA$ is Taylor by Proposition 4.14 of \cite{bulatov-jeavons-varieties}; in fact, a more general form of this result is proved in Proposition 2.1 of \cite{subquotient}.

Let $A$ and $B$ be the underlying sets of $\AA$ and $\BB$, respectively. Assume without loss of generality that $\RR$ is subdirect; that is, $\pi_1(\RR) = \pi_2(\RR) = A$. Let $\theta$ be the limit of the linking congruence of the binary relation $\RR^{\circ m}$ when $m$ gets large. An alternative way to describe $\theta$ is as follows: consider $\RR$ to be the edges of a directed graph on $\AA$, and consider two vertices to be equivalent if there is an undirected path connecting them such that the total number of forward edges along the path equals the total number of backward edges along the path. Then $\RR/\theta$ is the graph of an automorphism of $\AA/\theta$, so by assumption there is a congruence class $B$ of $\theta$ which is fixed by this automorphism. Since $\AA$ is idempotent, $\BB$ is a subalgebra of $\AA$, and since $B$ is fixed by this automorphism of $\AA/\theta$, $B + \RR = B$. The fact that $B$ is a congruence class of $\theta$ is equivalent to the restriction of $\RR$ to $B$ defining a directed graph of ``algebraic length 1,'' so we can apply the Loop Lemma of \cite{barto} to conclude that $\RR$ contains a constant tuple $(b,b)$ with $b \in B$.
\end{proof}

\section{Acknowledgements}
We would like to thank the MIT-PRIMES program --- including Dr.\ Tanya Khovanova, Dr.\ Alexander Vitanov, Dr.\ Slava Gerovitch, and Prof.\ Pavel Etingof --- for giving us the resources to make this research possible.

\bibliographystyle{plain}
\bibliography{references}

\appendix

\section{Reversible instances and linear programming}

The linear programming relaxation of a CSP is closely related to certain weak local consistency checking procedures. The most basic form of local consistency is known as arc-consistency.

\begin{definition} An instance $I$ of a CSP with domain $A$ is called \emph{arc-consistent} if there is a way to associate to each variable $x$ of $I$ a subset $A_x \subseteq A$ such that for every constraint relation $R$ of the instance which involves the variable $x$, the set of possible values of $x$ which are compatible with the relation $R$ is exactly $A_x$.
\end{definition}

Originally it was believed that a CSP was solved by the basic linear programming relaxation if and only if every arc-consistent instance had a solution -- this claim appeared in \cite{kun}, but the proof was flawed; the CSP defined by the three element algebra $(\{-,0,+\}, \sgn(x+y))$ is a counterexample.

A stronger form of local consistency was introduced in \cite{weakprague}.

\begin{definition}\label{defn-weak-prague} An instance $I$ of a CSP is called a \emph{weak Prague instance} if it satisfies the following three conditions.
\begin{itemize}
\item[(P1)] The instance $I$ is arc-consistent.
\item[(P2)] For every variable $x$, every set $B \subseteq A_x$, and every cycle $p$ from $x$ to $x$,
\[
B + p = B \;\; \implies \;\; B - p = B.
\]
\item[(P3)] For every variable $x$, every set $B \subseteq A_x$, and every pair of cycles $p,q$ from $x$ to $x$,
\[
B + p + q = B \;\; \implies \;\; B + p = B.
\]
\end{itemize}
\end{definition}

\noindent An alternative form of condition (P2) is given in \cite{sdp}.

\begin{proposition}[Barto, Kozik \cite{sdp}]\label{prop-p2*} If an instance satisfies condition (P1), then (P2) is equivalent to the following condition.
\begin{itemize}
\setlength{\itemindent}{0.5em}
\item[\emph{(P2*)}] For all variables $x$, sets $B \subseteq A_x$, and cycles $p$ from $x$ to $x$ with first step $s_1$ such that $B + p = B$, \[B + s_1 - s_1 = B;\]
\hspace{2pt} that is, $B$ is a union of linked components of $s_1$.
\end{itemize}
\end{proposition}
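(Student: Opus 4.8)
The plan is to prove the two implications separately, using arc-consistency (P1) only through the standard fact that the binary relation $S_s \le A_x \times A_y$ obtained by projecting the constraint of a step $s$ (from $x$ to $y$) onto its two relevant coordinates is \emph{subdirect}, and consequently so is every composite $S_{s_1}\circ\cdots\circ S_{s_m}$ of the relations along a path. Throughout I would use freely that $B + s = B + S_s$ and $B - s = B + S_s^-$, that for a subdirect $\RR \le A \times A'$ one has $B \subseteq (B+\RR)-\RR$ for every $B\subseteq A$ and $\Delta_{A'}\subseteq \RR^-\circ\RR$, and that $(C+s)-s = C$ holds exactly when $C$ is a union of linked components of $s$ — so (P2*) is literally the assertion $(B+s_1)-s_1 = B$.

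For (P2*) $\Rightarrow$ (P2), I would take a cycle $p = s_1\cdots s_m$ from $x$ to $x$ with $B+p = B$, put $B_0 \coloneqq B$ and $B_i \coloneqq B_{i-1}+s_i$, so that $B_m = B+p = B_0$. For each $i$ the cyclic rotation $p^{(i)} \coloneqq s_{i+1}\cdots s_m s_1\cdots s_i$ (indices mod $m$) is a cycle based at the variable reached after $s_i$, whose first step is $s_{i+1}$, and a direct computation using $B+p=B$ gives $B_i + p^{(i)} = B_i$. Applying (P2*) to each $p^{(i)}$ yields $B_i + s_{i+1} - s_{i+1} = B_i$, i.e. $B_{i+1} - s_{i+1} = B_i$ for every $i$ mod $m$. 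Feeding these identities into $B - p = B - s_m - s_{m-1} - \cdots - s_1$ (recall $B = B_m$) telescopes the right-hand side: $B_m - s_m = B_{m-1}$, then $B_{m-1}-s_{m-1} = B_{m-2}$, and so on down to $B_0 = B$, so $B-p = B$, which is (P2).

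For (P2) $\Rightarrow$ (P2*), I would fix a cycle $p = s_1\cdots s_m$ from $x$ to $x$ with $B+p = B$, let $y$ be the variable reached after $s_1$, put $B_1 \coloneqq B+s_1 \subseteq A_y$ and $Q \coloneqq S_{s_2}\circ\cdots\circ S_{s_m} \le A_y\times A_x$, so that $B_1 + Q = B$. The rotation $p^{(1)} = s_2\cdots s_m s_1$ is a cycle based at $y$ with $B_1 + p^{(1)} = B_1$, so (P2) gives $B_1 - p^{(1)} = B_1$. Unwinding the definition of $B_1 - p^{(1)}$ shows it equals $\bigl((B+s_1)-s_1\bigr) + Q^- = E + Q^-$, where $E \coloneqq (B+s_1)-s_1$. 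Hence $E + Q^- = B_1$; post-composing with $Q$ and using $B_1 + Q = B$ gives $E + (Q^-\circ Q) = B$. Since $Q$ is subdirect, $\Delta_{A_x}\subseteq Q^-\circ Q$, so $E \subseteq E + (Q^-\circ Q) = B$; combined with the automatic inclusion $B \subseteq E$ this gives $E = B$, i.e. $(B+s_1)-s_1 = B$, which is (P2*).

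The main obstacle will be the domain bookkeeping that makes all of this typecheck: each application of ``$-s$'' moves between the arc-consistency sets of the two endpoints of $s$, so one must track that, e.g., $B_1 \subseteq A_y$ while $E \subseteq A_x$, and that $Q \le A_y\times A_x$ so that $E + (Q^-\circ Q)$ lives in $A_x$ alongside $B$. The argument also rests entirely on arc-consistency supplying the subdirectness of every step relation and every composite of step relations, since that is exactly what licenses the inclusions $B \subseteq (B+s)-s$ and $\Delta_{A_x}\subseteq Q^-\circ Q$; the degenerate case $m=1$, where $Q$ is the empty composite $\Delta_{A_x}$, falls out of the same formulas. Apart from this, the only real idea needed is the passage from $p$ to the rotated cycles $p^{(i)}$ based at neighboring variables.
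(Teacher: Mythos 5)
Your proof is correct, and for the direction the paper actually writes out, it matches the paper's argument. The paper's (commented-out) proof handles only (P2*) $\Rightarrow$ (P2), by exactly your telescoping device: from $B+p=B$ it follows that every ``rotation'' of $p$ based at an intermediate image $B_i = B + s_1 + \cdots + s_i$ also fixes that image, so (P2*) gives $B_i + s_{i+1} - s_{i+1} = B_i$ for each $i$, and then $B - p = B - s_m - \cdots - s_1$ collapses step by step back to $B$. That is your argument, with the same indexing.

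The paper simply asserts that (P1) and (P2) $\Rightarrow$ (P2*) is ``easy to see'' and gives no proof. Your version of that direction is a genuine (if small) addition, and it is correct: rotating the cycle to $p^{(1)} = s_2\cdots s_m s_1$ based at the second variable, applying (P2) to get $B_1 - p^{(1)} = B_1$, rewriting the left side as $\bigl((B+s_1)-s_1\bigr) + Q^-$ with $Q = S_{s_2}\circ\cdots\circ S_{s_m}$, and then composing with $Q$ and invoking subdirectness of $Q$ (which follows from arc-consistency of each step) to trap $E = (B+s_1)-s_1$ between $B$ and $B$. One could quibble that the ``easy'' direction deserved a one-line sketch such as this; otherwise both your argument and the paper's lean on precisely the same two facts (arc-consistency gives subdirectness of step relations and their composites, and rotating a cycle preserves the $B + p = B$ hypothesis), so the two proofs should be regarded as the same.
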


Conditions (P1) and (P2) are closely related to the basic linear programming relaxation of a CSP, while condition (P3) is closely related to the basic \emph{semidefinite} programming relaxation of a CSP (see \cite{sdp}).

\begin{theorem}\label{lp-p1-p2} If $I$ is an instance of a CSP such that the basic linear programming relaxation of $I$ has a solution assigning probability vectors $p_R$ to each constraint $R$ of $I$ and probability vectors $p_x$ to each variable $x$, then the instance $I'$ obtained by restricting each constraint relation of $I$ to the support of the corresponding probability distribution $p_R$, and similarly for the variable domains, satisfies conditions (P1) and (P2).
\end{theorem}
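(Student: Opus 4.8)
The plan is to use $A_x \coloneqq \operatorname{supp}(p_x)$ as the arc-consistency witness and to deduce condition (P2) from the purely combinatorial implication $(e)\implies(a)$ of Proposition \ref{bin-reversible}, applied not to $\AA$ itself but to a single composed binary relation attached to each cycle, viewed over the support sets.

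First I would record the one fact that LP-consistency buys us: if a constraint $R$ of $I$ has the variable $x$ in position $i$ of its scope, then the $i$-th marginal of $p_R$ is $p_x$, so $\pi_i(\operatorname{supp}(p_R)) = \operatorname{supp}(p_x) = A_x$. In particular every tuple of $\operatorname{supp}(p_R)$ already has each of its coordinates in the corresponding support, so passing to $I'$ — restricting each domain to $A_x$ — deletes no tuple from any $\operatorname{supp}(p_R)$, and the constraint relations of $I'$ are exactly the supports $\operatorname{supp}(p_R)$. Condition (P1) is then immediate: the set of values of $x$ compatible with a constraint $R$ of $I'$ is precisely $\pi_i(\operatorname{supp}(p_R)) = A_x$, independent of which constraint involving $x$ we chose.

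For (P2), to each step $s = (k,(i,j))$ I would attach the binary relation $\PP_s \coloneqq \pi_{i,j}(\operatorname{supp}(p_{R_k}))$ together with the push-forward $q_s$ of $p_{R_k}$ along $\aa \mapsto (\aa_i,\aa_j)$; then $q_s$ is a positive distribution supported on $\PP_s$ whose first and second marginals are $p_{(\xx_k)_i}$ and $p_{(\xx_k)_j}$, and directly from the definitions $B + s = B + \PP_s$ and $B - s = B + \PP_s^{-}$ for every set $B$. Given a cycle $p = s_1,\dots,s_m$ from $x$ to $x$, the cycle equalities say exactly that the output variable of $s_t$ is the input variable of $s_{t+1}$ (cyclically), so the second marginal of $q_{s_t}$ equals the first marginal of $q_{s_{t+1}}$. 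I can therefore glue $q_{s_1},\dots,q_{s_m}$ left to right, using the construction of composed probability distributions from the proof of $(e)\implies(f)$ in Proposition \ref{bin-reversible}, into a single positive distribution $q$ supported on $\PP \coloneqq \PP_{s_1}\circ\cdots\circ\PP_{s_m}$ whose first and second marginals are both $p_x$; since these marginals are positive exactly on $A_x$, $\PP$ is subdirect on the finite set $A_x$. Unwinding the iterated sums, and using $(\RR\circ\SS)^- = \SS^-\circ\RR^-$, gives $B + p = B + \PP$ and $B - p = B + \PP^{-} = B - \PP$ for all $B \subseteq A_x$.

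To finish I would invoke the implication $(e)\implies(a)$ of Proposition \ref{bin-reversible} for $\PP$ on $A_x$: its proof only manipulates the numbers $p_{(a,b)}$ and the coordinate marginals and never uses the operations of the algebra, so it applies verbatim to the finite set $A_x$ equipped with $\PP$ and $q$, yielding $B + \PP = B \implies B - \PP = B$, i.e.\ $B + p = B \implies B - p = B$, which is (P2). The main work, and the only place a slip is likely, is the bookkeeping in the previous paragraph: checking that the push-forward marginals match at every junction of the cycle, keeping everything over the support sets $A_x$ (so that $\PP$ genuinely is subdirect and Proposition \ref{bin-reversible} is available), and getting the associativity and orientation conventions right so that $B+p$ and $B-p$ translate into $B+\PP$ and $B+\PP^{-}$ respectively. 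I expect no conceptual obstacle beyond this.
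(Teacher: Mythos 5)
Your proof is correct, and it reaches (P2) by a route that differs from the paper's in a way worth noting. The paper takes the condition (P2*) of Proposition~\ref{prop-p2*} (cited from Barto--Kozik) as the target: it attaches the scalar $P(B) = \sum_{a\in B} p_{x,a}$ to each $B \subseteq A_x$, observes the monotonicity $P(B) \le P(B+s)$ along a single step with equality iff $B + s - s = B$, and concludes that $B + p = B$ forces equality at the first step, which is exactly (P2*). You instead compose the entire cycle into a single binary relation $\PP = \PP_{s_1} \circ \cdots \circ \PP_{s_m}$ over $A_x$, glue the marginals $q_{s_t}$ of $p_{R_{k_t}}$ into a positive distribution on $\PP$ with both marginals equal to $p_x$ (using the gluing construction stated in the $(e)\implies(f)$ part of Proposition~\ref{bin-reversible}), and then quote $(e)\implies(a)$ of that same proposition, correctly noting that its proof is purely measure-theoretic and does not use the algebraic operations, so it applies to $\PP$ on the bare finite set $A_x$. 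Your bookkeeping translating $B+p$ into $B+\PP$ and $B-p$ into $B+\PP^-$ is right given the paper's definitions of step-sums and relation composition. The tradeoff: your version avoids invoking the external reformulation (P2*), reusing only machinery the paper has already established, but it must carefully track positivity and marginal-matching through an $m$-fold gluing; the paper's version defers to a cited lemma but then needs only the elementary one-step monotonicity of $P$. Both are sound probabilistic arguments built on the same underlying observation that an LP solution equips every step with a positive coupling whose marginals are the variable distributions.
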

\begin{proof} Assume for simplicity that $I = I'$; that is, all of the probability vectors have full support. The compatibility of the probability vectors $p_R$ with the probability vectors on the variable domains ensures that $I$ is arc-consistent, so (P1) is satisfied. For (P2), it is easier to check condition (P2*) from Proposition \ref{prop-p2*}. For each set $B \subseteq A_x$, we attach a probability $P(B)$ given by
\[
P(B) \coloneqq \sum_{a \in B} p_{x,a}.
\]
Now consider any step $p_1$ from a variable $x$ to an adjacent variable $y$ within a constraint with corresponding relation $R$. Let $S \subseteq A_x \times A_y$ be the binary projection of the corresponding relation $R$ onto $x$ and $y$, and let $p_S$ be the corresponding marginal distribution of $p_R$. Then we have
\[
P(B + S) = \sum_{b \in B+S} p_{y,b} \ge \sum_{b \in B+S} \sum_{a \in B} p_{S,(a,b)} = \sum_{a \in B} p_{x,a} = P(B),
\]
with equality when $B + S - S = B$. Thus if $B + p = B$, then we have
\[
P(B) \le P(B+p_1) \le P(B+p) = P(B),
\]
so $P(B+p_1) = P(B)$, and thus we have $B + p_1 - p_1 = B$.
\end{proof}


In fact, Theorem \ref{lp-p1-p2} has a converse when we restrict our attention to a single cycle at a time. The proof is a straightforward generalization of the implication $(a) \implies (e)$ from Proposition \ref{bin-reversible}.

\begin{theorem} If $I$ is an instance of a CSP such that the associated hypergraph of variables and relations consists of a single cycle, then $I$ has properties (P1) and (P2) if and only if the basic linear relaxation of $I$ has a solution such that for each constraint $R$ of $I$, the support of the corresponding probability distribution $p_R$ is exactly equal to the relation corresponding to $R$.
\end{theorem}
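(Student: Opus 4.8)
The plan is to handle the two directions separately, with the bulk of the work in the reverse implication, which the text correctly describes as a generalization of $(a)\Rightarrow(e)$ in Proposition~\ref{bin-reversible}. For the forward ("if") direction: if the basic linear relaxation of $I$ has a solution whose constraint distributions $p_R$ all have full support, then applying Theorem~\ref{lp-p1-p2} with $I' = I$ (nothing is removed when restricting to supports) immediately shows that $I$ satisfies (P1) and (P2). For the reverse ("only if") direction, I would first record the shape of a single-cycle instance: since the incidence graph of variables and constraints is a single cycle, each constraint involves exactly two distinct variables, so after projecting each constraint relation onto its two distinct coordinates the instance is equivalent to a cyclic list of binary relations $S_1, \ldots, S_m$ with $S_i \le \AA^2$ on domains $A_{y_{i-1}}, A_{y_i}$ (indices mod $m$), where $y_j$ is the variable shared by $S_j$ and $S_{j+1}$. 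It then suffices to produce probability distributions $q_i$ on each $S_i$ with full support such that the second marginal of $q_i$ equals the first marginal of $q_{i+1}$ for every $i$; these pull back to the desired linear-relaxation solution, with common variable marginals as the $p_{y_j}$.

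The key device is the \emph{unfolded} directed graph $G$ with vertex set $\bigsqcup_{j} \{j\}\times A_{y_j}$ and a directed edge $(j-1,a)\to(j,b)$ for each $(a,b)\in S_j$; I will call the set of edges of the form $(j-1,\cdot)\to(j,\cdot)$ the $j$-th \emph{gap}. Every closed walk in $G$ has length a multiple of $m$ and uses the same number of edges, say $\ell$, in each gap (it simply runs around the cycle $\ell$ times). The first step of the argument is to show that (P1) and (P2) force every edge of $G$ to lie on a directed cycle of $G$. I would check that if $B \subseteq V(G)$ satisfies $B + G = B$, then writing $B_j := B \cap (\{j\}\times A_{y_j})$ one gets $B_j + S_{j+1} = B_{j+1}$ for all $j$, hence $B_i + \PP_i = B_i$, where $\PP_i$ is the binary relation on $A_{y_i}$ obtained by composing $S_{i+1}, \ldots, S_i$ once around the cycle; property (P2) applied to the variable $y_i$ and the once-around cycle then gives $B_i - \PP_i = B_i$. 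Running the ``maximal strongly connected component'' argument of $(a)\Rightarrow(b)$ from Proposition~\ref{bin-reversible} directly on the finite digraph $G$ (using arc-consistency to guarantee every vertex of $G$ has an outgoing edge) upgrades this to the implication $B + G = B \Rightarrow B - G = B$ for all $B \subseteq V(G)$; that is, $G$ satisfies property $(a)$. Since the equivalences $(a)\Leftrightarrow(b)\Leftrightarrow(c)$ of Proposition~\ref{bin-reversible} are purely graph-theoretic statements about finite digraphs, $(a)\Rightarrow(c)$ now tells us every edge of $G$ lies on a directed cycle.

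The second step builds the distributions exactly as in $(c)\Rightarrow(e)$ of Proposition~\ref{bin-reversible}. Choose a finite family $\mathcal{C}$ of directed cycles of $G$ that together cover every edge of $G$, and define $q_j$ by first picking $C\in\mathcal{C}$ uniformly at random and then picking uniformly one of the (equally many) edges of $C$ lying in the $j$-th gap, reading off the corresponding element of $S_j$. Because in any closed walk the number of times it enters a vertex equals the number of times it leaves that vertex, the second marginal of $q_j$ coincides with the first marginal of $q_{j+1}$ for every $j$; because $\mathcal{C}$ covers every edge, each $q_j$ has full support on $S_j$, and since every vertex of $G$ has an outgoing edge lying on some cycle of $\mathcal{C}$, the common variable marginals have full support on the $A_{y_j}$. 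Pulling the $q_j$ back through the coordinate projections yields a solution of the basic linear relaxation of $I$ whose constraint distributions have full support, completing the proof.

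I expect the main obstacle to be the bookkeeping in the first step: verifying that the once-around composite relations $\PP_i$ are subdirect on $A_{y_i}$ (so that Proposition~\ref{bin-reversible} genuinely applies), and confirming that the finite-digraph argument of $(a)\Rightarrow(b)$ transfers verbatim to the layered graph $G$, whose vertex set is not literally the universe of an algebra but merely a finite set. Everything else --- the reduction to a cycle of binary relations, and the cycle-cover construction of the probability distributions --- is a routine transcription of the binary case already carried out in Proposition~\ref{bin-reversible}.
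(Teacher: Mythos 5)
Your proof is correct, but it takes a genuinely different route from the proof the paper actually gives. The paper's argument builds a lazy Markov chain on the layer–value pairs $(i,a)$: from $(i,a)$ it picks a uniformly random tuple $r$ of the $i$-th constraint relation with $v_i$-coordinate $a$ and moves to $(i+1, r_{v_{i+1}})$. Condition (P1) makes the chain well defined, condition (P2) is used to show that every state is recurrent, and then Perron–Frobenius applied from a uniform starting distribution yields a stationary distribution with full support, whose induced edge distributions give the required LP solution. You instead follow the chain $(a)\Rightarrow(b)\Rightarrow(c)\Rightarrow(e)$ from Proposition~\ref{bin-reversible} explicitly, applied to the layered ``unfolded'' digraph $G$, and build the distributions by hand from a cover of $G$ by directed cycles. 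The trade-off: your construction is elementary and fully explicit (no appeal to Markov chain convergence theory), at the cost of a little more bookkeeping in verifying the layer-by-layer marginal consistency; the paper's argument is shorter but leans on the steady-state machinery. Interestingly, your route is the one the paper's own text advertises (``a straightforward generalization of the implication $(a)\Rightarrow(e)$''), while the proof the authors actually wrote is the Markov chain one.

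One step in your write-up is phrased backwards and should be tightened. Having shown that $B+G=B$ implies $B_i-\PP_i=B_i$ for each $i$, you claim to obtain $B-G=B$ ``by running the $(a)\Rightarrow(b)$ argument.'' But the $(a)\Rightarrow(b)$ argument \emph{assumes} property $(a)$; it cannot be used to establish it. What you need, and what is true, is a direct argument: if $a\in B_{j+1}-S_{j+1}$ via some $b\in B_{j+1}$ with $(a,b)\in S_{j+1}$, then by (P1) one can continue forward from $b$ through $B_{j+2},\dots$ around the cycle and land at some $c\in B_j$; this exhibits a once-around walk from $a$ to $c\in B_j$, so $a\in B_j-\PP_j=B_j$. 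This gives $B_{j+1}-S_{j+1}\subseteq B_j$ directly (the reverse inclusion being immediate), hence $B-G=B$, and only \emph{then} do the purely graph-theoretic implications $(a)\Rightarrow(b)\Rightarrow(c)$ kick in. With that fix, the rest of your cycle-cover construction and the verification of marginal consistency are sound. (A small caveat on the setup: the paper allows a constraint on the cycle to also involve ``pendant'' variables that occur in no other constraint, so the reduction to binary relations should be read as projecting each constraint onto its two cycle-variables; this does not affect your argument.)
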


The connections between the basic linear programming relaxation and conditions (P1) and (P2) make the following conjecture natural.

\begin{conjecture}\label{p1-p2} A CSP defined by relations $\Gamma$ is solved by its linear programming relaxation if and only if every instance $I$ of the CSP which satisfies conditions (P1) and (P2) has a solution.
\end{conjecture}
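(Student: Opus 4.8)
The plan is to prove the two implications separately. The forward one --- if every instance satisfying (P1) and (P2) has a solution, then the CSP is solved by its linear programming relaxation --- is essentially already packaged in Theorem~\ref{lp-p1-p2}, while the reverse implication carries all of the difficulty and, in full generality, is not available unconditionally.

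For the forward direction, let $I$ be an instance of the CSP with a fractional solution, and let $p_R,p_x$ be the corresponding probability vectors. By Theorem~\ref{lp-p1-p2}, the instance $I'$ obtained by restricting every constraint relation and variable domain to the support of the relevant distribution satisfies (P1) and (P2), and every solution of $I'$ is a solution of $I$. Applying the hypothesis to $I'$ produces a solution of $I$, so the CSP is solved by its linear programming relaxation. The only bookkeeping point is that the relations of $I'$ are \emph{subrelations} of those of $\Gamma$ rather than members of $\Gamma$; this is harmless once one adopts the (standard) convention that the class of instances under consideration is closed under shrinking a constraint relation to a subrelation occurring inside the instance, and without such a convention the statement only needs a cosmetic reformulation.

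For the reverse direction I would argue in two stages. \textbf{Stage 1 (reduction to a fixed-point condition).} By Theorem~2 of \cite{kun}, ``solved by its linear programming relaxation'' is equivalent to the clone $\fO := \operatorname{Pol}(\Gamma)$ of operations preserving $\Gamma$ being round. Roundness of $\fO$ immediately forces the algebra $\AA = (A,\fO)$ to satisfy the hypothesis of Conjecture~\ref{reversible-conj}: any subquotient $\BB \in HS(\AA)$ is again round, and if $\operatorname{Aut}(\BB)=\{g_1,\dots,g_m\}$ then, for any $b_0\in\BB$ and any symmetric $m$-ary term $f_m$, the element $f_m(g_1(b_0),\dots,g_m(b_0))$ is fixed by every automorphism of $\BB$, since left multiplication by $h\in\operatorname{Aut}(\BB)$ merely permutes the tuple and $f_m$ is symmetric. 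So it suffices to show this fixed-point condition forces every instance satisfying (P1) and (P2) to have a solution. \textbf{Stage 2 (the crux).} This is precisely the conjecture sketched in commented form at the end of Section~6, and I would attack it by generalizing the proof of the binary case of Conjecture~\ref{reversible-conj} established above. Given a (P1)+(P2) instance $I$, form for each variable $x$ a \emph{cycle-linking congruence} $\theta_x$ on $A_x$ --- the smallest congruence modulo which the binary relation $B \mapsto B+p$ induced by every cycle $p$ from $x$ to $x$ becomes a bijection. Condition (P2), in the form (P2*) of Proposition~\ref{prop-p2*} together with the reversibility bookkeeping of Propositions~\ref{baby-reversible} and \ref{bin-reversible}, is what guarantees that these are genuine congruences and that passing to the quotient is consistent with every constraint. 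On the quotient instance every cycle acts by an automorphism of a subquotient of $\AA$, so --- exactly as $\Sym(\aa)$ collapses to a single variable in Section~6 --- each connected component of $I/\theta$ is equivalent to the question of whether a single subquotient $\BB$ has an element fixed by a group of automorphisms, which is our hypothesis. Lifting the resulting invariant congruence class back gives, at each variable $x$, a distinguished $\theta_x$-class $B_x$ carried to $B_y$ by every constraint step from $x$ to $y$; restricting each constraint relation to $\prod_x B_x$ yields a sub-instance in which every binary projection along a constraint has algebraic length $1$, whence the Loop Lemma of \cite{barto}, in its multi-sorted form, produces a solution --- the single-cycle case of which is already the theorem stated just before Conjecture~\ref{p1-p2}.

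The main obstacle is Stage~2. Two points are delicate: (i) conditions (P1) and (P2) constrain \emph{all} cycles of the instance simultaneously, so one must show the quotients $\AA/\theta_x$ fit together into a single coherent quotient instance --- in particular that transporting a congruence class around a non-contractible loop does not change it; and (ii) unlike the weak Prague instances handled by Barto and Kozik we have only (P1) and (P2) and not (P3), so we cannot fall back on the semidefinite relaxation, and the absorption/Loop-Lemma argument must be made to run on purely ``linear'' information --- presumably by exploiting the symmetric operations of all arities directly, e.g.\ averaging many cyclically shifted partial assignments, either to recover (P3) a posteriori or to bypass it entirely. By contrast, the reduction in Stage~1 is clean and does not even require Conjecture~\ref{conj}, since roundness (the stronger hypothesis) trivially implies the global fixed-point condition; Conjecture~\ref{conj} would only be relevant if one wanted to replace ``solved by LP'' with a semi-roundness hypothesis.
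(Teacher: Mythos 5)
The statement you are proving is labeled \emph{Conjecture}~\ref{p1-p2} in the paper, and the paper offers no proof of it --- only a discussion of evidence (Theorem~\ref{lp-p1-p2}, the single-cycle converse, and the special case Conjecture~\ref{reversible-conj}). Your proposal is therefore correctly calibrated: you supply a clean argument for one direction and an explicit acknowledgement that the other is open, which matches the paper's stance rather than contradicting it.

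On the direction you call forward (every (P1)+(P2) instance has a solution implies solved by LP): this is indeed the easy implication, and your route through Theorem~\ref{lp-p1-p2} is the one the paper's discussion suggests. The subrelation caveat you raise is genuine and deserves more than a parenthetical --- as written the paper's instances use relations from $\Gamma$, and the restricted instance $I'$ does not. The usual fix is that in the algebraic setting one works with all relations compatible with $\operatorname{Pol}(\Gamma)$; for that to apply here one also needs the supports of an LP solution to be compatible relations, which is not automatic and should be argued (one uses symmetric polymorphisms of large arity to show closure). On the direction you call reverse: your Stage~1 reduction is correct --- roundness of the clone gives, for every subquotient $\BB$, a fixed point of $\operatorname{Aut}(\BB)$ by the averaging argument you give (an automorphism commutes with every term operation, and left-multiplication on $\operatorname{Aut}(\BB)$ permutes the arguments of a symmetric term), modulo the standard pass to the idempotent reduct so that Conjecture~\ref{reversible-conj}'s hypothesis actually applies. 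Stage~2 is genuinely open, and you identify the right obstacles. One point worth stressing that the paper makes explicitly: Conjecture~\ref{reversible-conj} is only (a slight strengthening of) the \emph{single-variable, single-relation} special case of Conjecture~\ref{p1-p2}, so the generalization from the cycle-linking / algebraic-length-1 argument to arbitrary (P1)+(P2) instances --- your points (i) about coherence of the quotient across the instance and (ii) about running an absorption-style argument without (P3) --- is exactly where the difficulty is concentrated, and neither you nor the paper resolves it. In short: you have a proof of one implication (with a fixable gap about subrelations), a correct reduction of the other to an open problem, and you have honestly flagged that the crux remains conjectural, which is the most anyone can say about this statement given the paper's contents.
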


Conjecture \ref{reversible-conj} is a slight strengthening of a special case of Conjecture \ref{p1-p2}, where we restrict to the case of CSPs which have just a single variable and a single relation.


\end{document}